\DeclareMathAlphabet{\mathpzc}{OT1}{pzc}{m}{it}
\theoremstyle{plain}
\newtheorem{theorem}{Theorem}[section]
\newtheorem*{theorem*}{Theorem}
\newtheorem{lemma}[theorem]{Lemma}
\newtheorem*{claim*}{Claim}
\newtheorem{proposition}[theorem]{Proposition}
\newtheorem{corollary}[theorem]{Corollary}
\theoremstyle{definition}
\newtheorem{remark}[theorem]{Remark}
\newcommand{\ignore}[1]{}
\newcommand{\myframe}[1]{\begin{tabular}{|p{13.5cm}|}\hline #1\\\hline\end{tabular}}
\begin{document}
\setlength{\parindent}{0pt}

\ignore{
\documentclass[10pt]{amsart}
\usepackage[english]{babel}
\usepackage{amssymb}
\usepackage{amsmath}
\usepackage{srcltx}
\usepackage{lscape}

\thispagestyle{empty}

\textheight 21.5cm
\textwidth 14cm
\topmargin -0.6cm
\oddsidemargin 1cm
\evensidemargin 1cm

\font\fett=cmbx12 scaled \magstep2

\renewcommand{\baselinestretch}{1.3}
\newcommand{\myframe}[1]{\begin{tabular}{|p{13.5cm}|}\hline #1\\\hline\end{tabular}}

\renewcommand{\baselinestretch}{1.2}
\newcommand{\can}{\overline{\phantom{x}}}

\newtheorem{dummy}{Dummy}

\newtheorem{lemma}[dummy]{Lemma}
\newtheorem{theorem}[dummy]{Theorem}
\newtheorem{proposition}[dummy]{Proposition}
\newtheorem{corollary}[dummy]{Corollary}
\newtheorem{notation}[dummy]{Notation}
\newtheorem*{fact}{Fact}

\theoremstyle{definition}
\newtheorem{definition}{Definition}
\newtheorem{conjecture}{Conjecture}
\newtheorem{example}[dummy]{Example}
\newtheorem{question}{Question}
\newtheorem{problem}{Problem}
\newtheorem{remark}[dummy]{Remark}

\newcommand{\tl}{\widetilde{\tau}}
}

\date{\today}

\author{S. Pumpl\"un}
\email{susanne.pumpluen@nottingham.ac.uk}
\address{School of Mathematical Sciences\\
University of Nottingham\\
University Park\\
Nottingham NG7 2RD\\
United Kingdom
}

\keywords{Nonassociative division algebras, isotopy, three-dimensional, classification.}

\subjclass[2020]{Primary: 17A35;  Secondary: 17A60, 17A99}


\title[Division algebras  isotopic to a cyclic Galois field extension]
{A classification of the division algebras that are isotopic to a cyclic Galois field extension}

\begin{abstract}

We classify all  division algebras that are principal Albert isotopes of a cyclic Galois field extension of degree $n>2$ up to isomorphisms.
 We achieve a ``tight'' classification when the cyclic Galois field extension is cubic. The classification is ``tight'' in the sense that the list of algebras has features that make it easy to distinguish non-isomorphic ones.
\end{abstract}

\maketitle

%
\section*{Introduction}
%

Classifications of classes of nonassociative division algebras of dimension $n$ over some base field $F$  heavily depend on the choice of $F$ and on $n$, take $n=3$ as example: if $F$  is an algebraically closed or a real closed field, then
 there are no three-dimensional nonassociative division algebras over $F$.
  Over finite fields, however, three-dimensional nonassociative division algebras  exist.

  The first examples of three-dimensional division algebras are due to Dickson who constructed three-dimensional unital commutative nonassociative algebras already in 1905 \cite{D1, D2, D3, D4}. Kaplansky then developed their structure theory in a more modern language \cite{KI, KII}. Over finite fields of characteristic not two, every three-dimensional commutative division algebra is a field or one of these ``Dickson'' algebras; all are  isotopic to generalized twisted fields
 \cite{M1, M2}.

In this paper, we will investigate the $n$-dimensional algebras over a field $F$ that are principal Albert isotopes of cyclic field extensions, 
with a special emphasis on $n=3$.

  Let $V$ be an $n$-dimensional $F$-vector space and $ {\rm Alg}(V)$ be the set of non-unital algebra structures on $V$.
  If $A \in {\rm Alg}(V)$ is \emph{regular}  (i.e.,  the left and right multiplication by some element in $A$ are invertible) then the orbit of $A$ under
$G_0(V)={\rm Gl}(V)\times {\rm Gl}(V)$ contains a unital algebra.  If this algebra is associative or an octonion algebra, then it  is unique up to isomorphism  (Theorem \ref{thm:1.10}). We call it the \emph{unital heart} of $A$. As a first step we obtain
   a rough ``classification'' of  all those regular algebras which have different associative unital hearts, and which have different octonion algebras as their unital heart  (Theorem \ref{thm:1.10}).  We  proceed to give necessary and sufficient criteria for two algebras to be isomorphic, which have the same commutative associative  unital heart (Theorem \ref{thm:1.12}).

 In this paper we focus on algebras whose unital heart is a cyclic Galois field extension $K/F$  of dimension $n$. Nonassociative $n$-dimensional division algebras isotopic to cyclic field extensions are
 understood up to isomorphism over those base fields, where we have a full understanding of the cyclic field extensions of degree $n$: Given a fixed cyclic field extension $K$ of degree $n$ over $F$, we can list all the types of  division algebras over the field $F$ whose unital heart is $K$, but only achieve a full classification for some subtypes (Theorem \ref{thm:cyclicGalois}). The situation improves slightly when $n$ is prime (Theorem \ref{thm:cyclicGaloisprime}).

The Classification Theorem for algebras with a cubic field extension as unital heart is ``tight'' in the sense of Petersson for several cases (Theorem \ref{thm:classcubic}). We then give necessary and sufficient criteria
for any two  algebras of dimension three with the same unital heart to be isomorphic.

The content of the paper is  as follows:
After collecting the basic terminology (Section \ref{sec:prel}), we describe the bijective linear maps for Galois $C_n$-algebras in Section \ref{sec:invertiblemaps}. We then consider regular algebras which have  a Galois $C_n$-algebra as their unital heart (Section \ref{sec3}), and
achieve a classification of the three-dimensional division algebras which have  a cubic Galois field extension as unital heart in Section \ref{sec:4}.
With the same methology, we then investigate the algebras whose unital heart is a cyclic Galois field extension of degree $n$ in Section \ref{sec5}. In this generality, we are still able to list non-isomorphic classes of algebras, but are not able to parametrize the algebras to the extent we were able to in several subcases for the $n=3$ case.

There are a range of papers that attempt to classify certain subfamilies of nonassociative  algebras of small dimension up to isomorphism, often over $\mathbb{C}$, $\mathbb{R}$, or finite fields, and often employing structural constants: unital associative algebras over algebraically closed fields of dimension less than five are classified  in \cite{G};
five-dimensional unital associative algebras over an algebraically closed field of characteristic not two are classified up  in \cite{Ma}, both approaches use structural constants. A classification of two-dimensional nonassociative algebras over any base field employing structure constants can be found in \cite{B}. While there are good arguments for using structure constants, eg. see \cite{Ka, KV},
we have avoided them in our classification, as we believe the structure of the algebras becomes more visible this way.

 Petersson \cite{P}
 achieved a full classification of all nonassociative two-dimensional algebras  up to isomorphism that was applied to both finite and real base fields \cite{HP, PS}.
In this paper, we  successfully generalized Petersson's methodology to certain classes of $n$-dimensional algebras.

   Regular
 algebras whose unital heart is a $C_n$-Galois algebra can be listed in a similar way, if desired.
 Singular (i.e., not regular) algebras can be partly investigated by a direct approach following \cite{P}  as well.
  Partial results can be obtained but are not treated here.

%
%

\section{Singular and regular algebras and the unital heart} \label{sec:prel}

\subsection{Isotopes and Kaplansky's trick}

 Let $R$ be a unital commutative ring $R$, and $M$ an $R$-module. Write ${\rm Alg}(M) = {\rm Hom}_R(M\otimes_R M,M)$ for the $R$-module of non-associative $R$-algebra structures on $M$. Given $A\in {\rm Alg}(M)$, we write $xAy$ for the product of $x,y\in M$ in the algebra $A$ instead of simply juxtaposition, when it is not ad hoc clear which algebra multiplication is considered.
If $A$ is a unital associative algebra, then we denote the group of invertible elements in $A$ by $A^\times$.

 Define
  $S_0(M) = {\rm End}_R(M) \times {\rm End}_R(M)$. If we view ${\rm End}_R(M)$ as a multiplicative monoid in the natural way, then $S_0(M)$ is a multiplicative monoid that contains $G_0(M) = {\rm Gl}(M) \times {\rm Gl}(M)$ as a subgroup.

For $f,g,h\in {\rm End}_R(M)$ we define the algebra $A^{(f,g,h)}$, called a
{\it homotope} of $A$, as $M$ together with the new multiplication
$$xA^{(f,g,h)}y=h(f(x)A g(y))$$
for $x,y\in M$. Two algebras $A, A'\in {\rm Alg}(M)$ are called {\it homotopic}, if $A' = A^{(f,g,h)}$ for some $f,g,h \in {\rm End}(M)$, and {\it isotopic}, if $A' = A^{(f,g,h)}$ for some $f,g,h \in {\rm Gl}(M)$ (
 in particular, if  $f=g=h^{-1}$ then $A\cong A'$ are isomorphic).
We call  $A^{(f,g)} =A^{(f,g,id)}$ a \emph{principal Albert homotope} of $A$, and if $(f,g)\in G_0(M)$ a \emph{principal Albert isotope} of $A$.

The monoid $S_0(M)$ acts on ${\rm Alg}(M)$ from the right by principal Albert homotopes via
\begin{align*}
xA^{(f,g)}y = f(x)Ag(y) &&(x,y \in M,\;A \in {\rm Alg}(M),\; (f,g) \in S_0(M))
\end{align*}

 We know that $(A^{(f,g)})^{op}=(A^{op})^{(f,g)}$.

 An algebra $A \in {\rm Alg}(M)$  is called \emph{left regular}, if there exists $u\in A$ such that $L_A(u)\in {\rm Gl}(M)$,
  otherwise it is \emph{left singular}, and
   \emph{right regular}  if there exists $u\in A$ such that $R_A(u)\in {\rm Gl}(M)$, otherwise it is \emph{right singular}.
  If $A$  is both left and right regular, it is called \emph{regular}, otherwise \emph{singular}. Clearly, $A$ is
left regular if and only if $A^{op}$ is right regular. It also follows easily that, if $A\in {\rm Alg}(M)$
is regular, so are the isotopes $A^{(f,g)}$ for all $(f,g) \in G_0(V)$.

Let $F$ be a field.  From now on, let $V$ denote an $n$-dimensional $F$-vector space.
Let $0\not=A\in {\rm Alg}(V)$. Then $A$ is called a \emph{division algebra}  if for any
$a\in A$, $a\not=0$, the left and right multiplication with $a$, $L_a: A\rightarrow A$, $L_a(x)=ax$,
and  $R_a: A\rightarrow A$, $R_a(x)=xa$, are bijective.
If it is not clear which algebra structure on $V$ is considered, we  denote the left and right multiplication with $a\in V$ by $L_A(a)$ and $R_A(a)$.
If $A$ has finite dimension over $F$, then $A$ is a division algebra if and only if $A$ has no zero divisors \cite[pp. 15, 16]{Sch}.
 If $A$ is a division algebra with underlying $F$-vector space $V$ and $f,g,h\in {\rm Gl}(V)$, then $A^{(f,g,h)}$ is a division algebra.
Note that every division algebra over a field $F$ is a regular algebra, and that the principal Albert isotope of any division algebra again is a division algebra.

The following extends \cite[Lemma 1.3]{P}, its proof remains the same:

\begin{lemma}
For $A,B\in {\rm Alg}(M)$, $(f,g)\in S_0(M)$, every isomorphism $\varphi:A\rightarrow B$ is also an isomorphism $\varphi: A^{(f,g)}\rightarrow B^{(f',g')}$ where $f'=\varphi f\varphi^{-1}$, and $g'=\varphi g\varphi^{-1}$.
\end{lemma}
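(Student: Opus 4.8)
The plan is to verify directly from the definition of principal Albert homotope that $\varphi$ intertwines the two new multiplications, given that it intertwines the original ones on $A$ and $B$. Since $\varphi : A \to B$ is an algebra isomorphism, we have $\varphi(xAy) = \varphi(x)B\varphi(y)$ for all $x,y \in M$, and $\varphi$ is an $R$-module isomorphism. The goal is to show $\varphi(xA^{(f,g)}y) = \varphi(x)B^{(f',g')}\varphi(y)$ for all $x,y \in M$, where $f' = \varphi f \varphi^{-1}$ and $g' = \varphi g \varphi^{-1}$.

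First I would expand the left-hand side using the definition of the homotope product: $\varphi(xA^{(f,g)}y) = \varphi\bigl(f(x)\,A\,g(y)\bigr)$. Then, applying the fact that $\varphi$ is an isomorphism of $A$ onto $B$ to the elements $f(x), g(y) \in M$, this equals $\varphi(f(x))\,B\,\varphi(g(y))$. Next I would rewrite $\varphi(f(x)) = (\varphi f \varphi^{-1})(\varphi(x)) = f'(\varphi(x))$ and similarly $\varphi(g(y)) = g'(\varphi(y))$, so the expression becomes $f'(\varphi(x))\,B\,g'(\varphi(y))$, which by the definition of $B^{(f',g')}$ is exactly $\varphi(x)\,B^{(f',g')}\,\varphi(y)$. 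Since $\varphi$ is already bijective and $R$-linear on the underlying module (which is unchanged by passing to homotopes), this shows $\varphi$ is an isomorphism $A^{(f,g)} \to B^{(f',g')}$. One should also note that $f' = \varphi f \varphi^{-1}$ and $g' = \varphi g \varphi^{-1}$ lie in $S_0(M)$ (and in $G_0(M)$ when $(f,g) \in G_0(M)$, and in the isotopy case when $\varphi$ is invertible), since conjugation by an invertible module endomorphism preserves ${\rm End}_R(M)$ and ${\rm Gl}(M)$, so $(f',g')$ is a legitimate homotopy datum.

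There is essentially no obstacle here: the argument is a one-line chase once the definitions are unwound, and it is exactly the content of \cite[Lemma 1.3]{P} with the same proof. The only mild subtlety worth a sentence is that the underlying module $M$ of $A^{(f,g)}$ is literally the same as that of $A$, so there is no compatibility issue about which module map $\varphi$ is; the homotope operation changes only the multiplication tensor, and $\varphi$ as a module map is untouched. Thus the proof reduces to the displayed chain of equalities above.
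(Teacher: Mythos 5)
Your proof is correct and is exactly the argument the paper intends: the paper simply cites \cite[Lemma 1.3]{P} and notes that its proof carries over unchanged, and that proof is precisely the definition-unwinding chain $\varphi(f(x)\,A\,g(y)) = \varphi(f(x))\,B\,\varphi(g(y)) = f'(\varphi(x))\,B\,g'(\varphi(y))$ that you wrote out. Nothing further is needed.
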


Division algebras are principal Albert isotopes of unital division algebras, 
more generally so are regular algebras \cite[1.5]{P}:

\begin{proposition} (Kaplansky's Trick)\label{prop:1.5} 
Let $V$ be an $n$-dimensional $F$-vector space, $A\in {\rm Alg}(V)$ be a regular algebra, and choose $u, v \in V$ such that $f=R_A(v)$ and $g=L_A(u)$ are bijective.
Then $B=A^{(f^{-1},g^{-1})}$ is a unital algebra over $F$ with unit element $e=uAv$ and $A=B^{(f,g)}$.
\end{proposition}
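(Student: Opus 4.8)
The plan is to verify the two claims directly from the definition $x\,A^{(f,g)}\,y = f(x)\,A\,g(y)$ of a principal Albert homotope, together with the resulting monoid action law $\big(A^{(f,g)}\big)^{(f',g')} = A^{(ff',\,gg')}$, where $ff'$ denotes the composite $x\mapsto f(f'(x))$. Since $f=R_A(v)$ and $g=L_A(u)$ are bijective by hypothesis, $B=A^{(f^{-1},g^{-1})}$ is a well-defined principal Albert isotope of $A$; it then remains to show that $e=uAv$ is a two-sided unit of $B$ and that $B^{(f,g)}=A$.

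The key observation is the behaviour of $e$ under $f^{-1}$ and $g^{-1}$. From $f=R_A(v)$ we get $f(u)=uAv=e$, so $f^{-1}(e)=u$; from $g=L_A(u)$ we get $g(v)=uAv=e$, so $g^{-1}(e)=v$. Now for every $x\in V$,
\[
e\,B\,x \;=\; f^{-1}(e)\,A\,g^{-1}(x) \;=\; u\,A\,g^{-1}(x) \;=\; L_A(u)\big(g^{-1}(x)\big) \;=\; g\big(g^{-1}(x)\big) \;=\; x,
\]
\[
x\,B\,e \;=\; f^{-1}(x)\,A\,g^{-1}(e) \;=\; f^{-1}(x)\,A\,v \;=\; R_A(v)\big(f^{-1}(x)\big) \;=\; f\big(f^{-1}(x)\big) \;=\; x,
\]
so $B$ is unital with unit $e$. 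Finally, $B^{(f,g)} = \big(A^{(f^{-1},g^{-1})}\big)^{(f,g)} = A^{(f^{-1}f,\,g^{-1}g)} = A^{(\mathrm{id},\mathrm{id})} = A$.

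There is no real obstacle here: the whole content of the proposition is the ``trick'' that $f^{-1}(e)=u$ and $g^{-1}(e)=v$, after which unitality is forced by the two one-line computations above. The only things to be careful about are keeping the multiplication of $A$ notationally distinct from that of the new algebra $B$ (the convention $xAy$ versus $xBy$ does this) and applying the monoid action law in the correct order so that the exponents cancel.
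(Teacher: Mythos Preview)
Your proof is correct and is the standard direct verification of Kaplansky's trick. The paper does not actually supply its own proof of this proposition; it simply records the statement and cites \cite[1.5]{P}, so your argument is precisely the kind of elementary unpacking that the citation stands in for.
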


\begin{lemma} \label{le:1.9}
Let $K\in {\rm Alg}(M)$ be a unital commutative associative algebra and $(f,g)\in G_0(M)$.  Write $L=L_K$ and $R=R_K$. Then the following are equivalent:
\\ (i) $K^{(f,g)}$ is a unital algebra over $R$.
\\ (ii) $K^{(f,g)}$ is a \emph{Jordan isotope} of $K$, i.e. there exists $w\in K^\times$ such that $K^{(f,g)}=K^{(w)}$, with $K^{(w)}$
defined via $xK^{(w)}y=xwy$ for all $x,y\in V$.
\\ (iii) $K^{(f,g)}\cong K$.
\\ Moreover, in this case $w$ satisfies $w^{-1}=1_{K^{(f,g)}}$ and is uniquely determined, and there are $u,v\in K$ such that
$w=uv$, $f=L(u)$, $g=R(v)$.
\\ A map $\varphi: K^{(f,g)}\rightarrow K$ is an isomorphism if and only if $\varphi=\sigma L(w)$ for some $\sigma\in {\rm Aut} (K)$.
\end{lemma}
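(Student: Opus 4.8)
The plan is to prove the cycle (iii) $\Rightarrow$ (i) $\Rightarrow$ (ii) $\Rightarrow$ (iii) and to read the ``moreover'' assertions off the proofs of the last two implications. Throughout I use that, since $K$ is commutative and associative, $L=R$ and $L\colon K\to{\rm End}_R(M)$ is a homomorphism of unital algebras; in particular $L(a)L(b)=L(ab)$, $L(1)=\mathrm{id}_M$, and $L(a)$ is invertible in ${\rm End}_R(M)$ precisely when $a\in K^\times$, since a right inverse $T$ of $L(a)$ satisfies $a\,T(1)=1$, and then $T(1)$ is a two-sided inverse of $a$ by commutativity.

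The implication (iii) $\Rightarrow$ (i) is immediate, as any algebra isomorphic to the unital algebra $K$ is itself unital. For (i) $\Rightarrow$ (ii), assume $K^{(f,g)}$ has unit $e$. Writing out $e\,K^{(f,g)}x=x=x\,K^{(f,g)}e$ for all $x\in M$ gives $L(f(e))\circ g=\mathrm{id}_M$ and $L(g(e))\circ f=\mathrm{id}_M$; as $f$ and $g$ are invertible, so are $L(f(e))$ and $L(g(e))$, hence $f(e),g(e)\in K^\times$. Setting $u=g(e)^{-1}$ and $v=f(e)^{-1}$, we get $f=L(g(e))^{-1}=L(u)$ and $g=L(f(e))^{-1}=L(v)=R(v)$, so that for all $x,y\in M$
$$x\,K^{(f,g)}y=f(x)g(y)=(ux)(vy)=(uv)xy=x\,K^{(uv)}y;$$
thus $K^{(f,g)}=K^{(w)}$ with $w=uv\in K^\times$. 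The unit of $K^{(w)}$ satisfies $ew=1$, so $e=w^{-1}$, i.e.\ $w^{-1}=1_{K^{(f,g)}}$; and $w$ is uniquely determined, since $K^{(w)}=K^{(w')}$ evaluated at $x=y=1$ forces $w=w'$.

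For (ii) $\Rightarrow$ (iii), the map $L(w)\colon K^{(w)}\to K$ is bijective (as $w\in K^\times$) and satisfies $L(w)\left(x\,K^{(w)}y\right)=w(xwy)=(wx)(wy)=L(w)(x)\,L(w)(y)$, hence is an algebra isomorphism, so $K^{(f,g)}=K^{(w)}\cong K$. Finally, for the description of the isomorphisms $\varphi\colon K^{(f,g)}\to K$: if $\sigma\in{\rm Aut}(K)$ then $\sigma L(w)$ is the composite of the isomorphism $L(w)\colon K^{(f,g)}\to K$ with an automorphism of $K$, hence an isomorphism; conversely, any isomorphism $\varphi\colon K^{(f,g)}\to K$ makes $K^{(f,g)}$ unital, so $w$ exists by the above, and then $\sigma:=\varphi L(w)^{-1}=\varphi L(w^{-1})$ is a composite of isomorphisms $K\to K$, hence lies in ${\rm Aut}(K)$, with $\varphi=\sigma L(w)$. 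The only point requiring care --- more a subtlety than a real obstacle --- is deducing $f(e),g(e)\in K^\times$ from invertibility of the endomorphisms $L(f(e))$ and $L(g(e))$, which is exactly where commutativity of $K$ enters; the rest is bookkeeping with the relations $L(a)L(b)=L(ab)$.
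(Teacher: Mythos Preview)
Your proof is correct. The paper does not supply its own argument for this lemma but simply remarks that the proof of \cite[Lemma~1.9]{P} ``holds verbatim for any $n$-dimensional unital commutative associative $R$-algebra $K$''; your proof is exactly the natural one (read off $f=L(u)$ and $g=R(v)$ from the two unit equations, then use $L(w)$ as the explicit isomorphism $K^{(w)}\to K$) and coincides with that approach.
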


This is the generalization of \cite[Lemma 1.9]{P} which was stated only for $n=2$. Its proof holds verbatim for any $n$-dimensional unital commutative associative $R$-algebra $K$.

Let $A \in {\rm Alg}(V)$ be a unital alternative $F$-algebra and suppose $p,q \in A$ are invertible. Following McCrimmon \cite{McC}, the product
\[
x\,._{p,q}\,y := (xp)(qy)
\]
for $x,y \in V$ makes $A$ into a new algebra denoted $A^{(p,q)} \in {\rm Alg}(V)$ which is again unital alternative; its unit element is given by $1^{(p,q)} = (pq)^{-1}$. Two unital alternative algebras $A,B \in {\rm Alg}(V)$ are called \emph{$(p,q)$-isotopic}, if $A^{(p,q)} \cong B$ for some invertible elements $p,q \in A$. $(p,q)$-isotopy is an equivalence relation that breaks down to isomorphism if $A$ is associative or an octonion algebra.

Let $A \in {\rm Alg}(V)$ be a unital alternative algebra, $f,g \in {\rm Gl}(V)$ and $B = A^{(f,g)}$. Then the following conditions are easily seen to be equivalent:
\begin{itemize}
\item [(i)] $B$ is unital.

\item [(ii)] There are invertible elements $p,q \in A$ such that $B \cong A^{(p,q)}$.
\end{itemize}
 Let $B \in {\rm Alg}(V)$ be regular. Then $A \in {\rm Alg}(V)$ is called a \emph{unital heart} of $B$ if $A$ is unital and there exist $f,g \in {\rm Gl}(V)$ such that  $A^{(f,g)} \cong B$. By Kaplansky's trick, unital hearts always exist, and by the above equivalence, if one of them is alternative (resp. associative, commutative associative, or an octonion algebra), all of them are, in which case the unital heart is unique up to $(p,q)$-isotopy (resp. isomorphism):

\begin{theorem} (The Unital Heart)\label{thm:1.10} 
For all $A\in {\rm Alg}(V)$, the following statements are equivalent:
\\ (i) $A$ is regular.
\\ (ii) There exist a unital algebra $K\in {\rm Alg}(V)$ and $f,g\in {\rm Gl}(V)$ such that $A\cong K^{(f,g)}$.
\\
 If $K$ is alternative, then $K$ is unique up to $(p,q)$-isotopy. In particular, if $K$ is associative or an octonion algebra, then  $K$ is unique up to  isomorphism  and is called \emph{the} unital heart of $A$.
 \end{theorem}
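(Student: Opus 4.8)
The plan is to dispatch the equivalence (i)$\Leftrightarrow$(ii) quickly, since it is essentially a repackaging of Kaplansky's trick, and then to concentrate on the uniqueness clause, which carries the actual content.

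For (i)$\Rightarrow$(ii), if $A$ is regular then Proposition~\ref{prop:1.5} supplies $f,g\in{\rm Gl}(V)$ with $B:=A^{(f^{-1},g^{-1})}$ unital and $A=B^{(f,g)}$, so $K:=B$ together with this $f,g$ does the job. For (ii)$\Rightarrow$(i), any unital algebra $K$ is regular (its unit $e$ gives $L_K(e)=R_K(e)={\rm id}_V$), hence so is every principal Albert isotope $K^{(f,g)}$ with $(f,g)\in G_0(V)$ by the remark recalled above; and regularity passes through an isomorphism $\varphi$, because $L_B(\varphi u)=\varphi L_A(u)\varphi^{-1}$ and $R_B(\varphi u)=\varphi R_A(u)\varphi^{-1}$. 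Thus $A\cong K^{(f,g)}$ makes $A$ regular.

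For the uniqueness assertion, let $K$ and $L$ both be unital hearts of $A$ with $K$ alternative. Choose $f,g,f',g'\in{\rm Gl}(V)$ with $K^{(f,g)}\cong A\cong L^{(f',g')}$ and pick $\varphi\in{\rm Gl}(V)$ realizing an isomorphism $L^{(f',g')}\to K^{(f,g)}$. The key move is to pull $K$ back along $\varphi$: set $xMy:=\varphi^{-1}\bigl(\varphi(x)\,K\,\varphi(y)\bigr)$, so that $\varphi\colon M\to K$ is an isomorphism and $M$ is unital alternative. Expanding the isomorphism identity $\varphi\bigl(f'(x)\,L\,g'(y)\bigr)=f(\varphi x)\,K\,g(\varphi y)$, rewriting the right-hand side through the multiplication of $M$, and using that $\varphi$ is bijective, one reads off $L=M^{(h,k)}$ for suitable $h,k\in{\rm Gl}(V)$. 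Now $L=M^{(h,k)}$ is unital and $M$ is unital alternative, so the equivalence recorded immediately before the theorem yields invertible $p,q\in M$ with $L\cong M^{(p,q)}$; hence $L$ is $(p,q)$-isotopic to $M$, and since $(p,q)$-isotopy is an equivalence relation that contains isomorphism and $M\cong K$, also to $K$. In particular $L$ is again unital alternative. If $K$ is moreover associative or an octonion algebra, then $L\cong M^{(p,q)}$ lies in the same class, and the cited fact that $(p,q)$-isotopy collapses to isomorphism there gives $L\cong K$, which justifies the name ``the'' unital heart.

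I expect the only fiddly point to be the bookkeeping in the pull-back step — tracking which composite of $f,g,f',g',\varphi$ and their inverses ends up as $h$ and $k$ — but this is routine because the monoid $S_0(V)$ acts on ${\rm Alg}(V)$ by principal Albert homotopes, so $(M^{(h,k)})^{(h',k')}=M^{(hh',kk')}$ and the composites collapse cleanly. Everything else is invoked off the shelf: Proposition~\ref{prop:1.5}, the stability of regularity and of (unital) alternativity under isotopy and isomorphism, and the equivalence ``$B=A^{(f,g)}$ unital $\iff$ $B\cong A^{(p,q)}$ for invertible $p,q$'' stated just above.
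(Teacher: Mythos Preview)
Your proof is correct and follows essentially the same route as the paper: the equivalence (i)$\Leftrightarrow$(ii) via Kaplansky's trick, and uniqueness by showing that a second unital heart is a principal Albert isotope of the first, then invoking the equivalence stated just before the theorem and McCrimmon's result. The paper's proof is more compressed, simply asserting that from $A\cong K^{(f,g)}$ and $A\cong K'^{(f',g')}$ one obtains $K'\cong K^{(f'',g'')}$; your pull-back construction of $M$ is exactly the detail that justifies this line, so the difference is one of explicitness rather than strategy.
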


This generalizes \cite[Proposition 1.10]{P} which was formulated for $n=2$.
It remains an open problem how or if this approach might work for regular algebras that are Albert isotopes of a unital algebra that is not an alternative algebra.

\begin{proof}
The proof that (i) and (ii) are equivalent given in \cite[Proposition 1.10]{P} holds verbatim for any $n$-dimensional unital algebra $K$. It remains to show that $K$ is unique up to $(p,q)$-isotopy when $K$ is alternative: Let $K,K'\in {\rm Alg}(V)$  be unital and  alternative with
$A\cong K^{(f,g)}$ and $A\cong K^{(f',g')}$ for some  $f,g,f',g'\in {\rm Gl}(V)$.
Then there are $f'',g''\in {\rm Gl}(V)$  such that $K'\cong K^{(f'',g'')}$, therefore
$K\cong K'^{(p,q)}$ for some invertible $p,q$ in $K'$. 
 The two  alternative unital algebras $K,K'$ thus are $(p,q)$-isotopic.
 By \cite{McC}, this means $K\cong K'$  if one of them is associative, or an octonion algebra.
\end{proof}

\begin{theorem} (Isomorphism Criterium)\label{thm:1.12} 
Let $K\in {\rm Alg}(M)$ be a unital commutative associative algebra and
  $(f,g) \in G_0(V)=G_0(M)$, $(f^\prime,g^\prime) \in S_0(M)$.
  Write $L=L_K$ and $R=R_K$. Then the following statements are equivalent:
\\ (i) $\varphi:K^{(f,g)}\rightarrow K^{(f',g')}$ is an isomorphism.
\\ (ii) There exist $u,v\in K^\times$, $\sigma\in {\rm Aut}(K)$ such that
$$f'=L(v^{-1})\sigma f \sigma^{-1}L(uv), \quad g'=L(v^{-1})\sigma g \sigma^{-1}L(uv),\quad \varphi=L((uv)^{-1})\sigma.$$
\end{theorem}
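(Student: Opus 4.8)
The plan is to establish the two implications separately, the substance being in (i) $\Rightarrow$ (ii). For (ii) $\Rightarrow$ (i) I would verify directly that $\varphi=L((uv)^{-1})\sigma$ is an isomorphism $K^{(f,g)}\to K^{(f',g')}$: it is bijective because both $L((uv)^{-1})$ and $\sigma$ are, and to see it is multiplicative one evaluates $\varphi\bigl(x\,K^{(f,g)}\,y\bigr)=\varphi\bigl(f(x)g(y)\bigr)$ and $\varphi(x)\,K^{(f',g')}\,\varphi(y)=f'(\varphi(x))\,g'(\varphi(y))$ (all unmarked products taken in $K$), uses that $\sigma\in{\rm Aut}(K)$ commutes with multiplication in $K$, and uses that $K$ is commutative associative so that the operators $L(\cdot)$ all commute with one another. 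The key simplification is the cancellation $L(uv)\,\varphi=\sigma$, which collapses $f'\circ\varphi$ to $L(v^{-1})\sigma f$ and $g'\circ\varphi$ to the corresponding expression, after which the two sides coincide; I expect the only friction to be bookkeeping the scalar left multiplications.

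For (i) $\Rightarrow$ (ii), let $\varphi\colon K^{(f,g)}\to K^{(f',g')}$ be an isomorphism. Since $K$ is unital it is regular, hence so is the isotope $K^{(f,g)}$, hence so is the isomorphic algebra $K^{(f',g')}$; writing out the left- and right-regularity conditions for $K^{(f',g')}$ then forces, over the finite-dimensional $V$, that $(f',g')\in G_0(V)$, so we may treat it as an invertible pair. Now peel off the isotopes: by the lemma asserting that an isomorphism $A\to B$ is also an isomorphism $A^{(p,q)}\to B^{(\varphi p\varphi^{-1},\varphi q\varphi^{-1})}$, applied with $(p,q)=(f^{-1},g^{-1})\in G_0(V)$, the same $\varphi$ is an isomorphism from $(K^{(f,g)})^{(f^{-1},g^{-1})}=K$ onto $(K^{(f',g')})^{(\varphi f^{-1}\varphi^{-1},\,\varphi g^{-1}\varphi^{-1})}$; since the $S_0(M)$-action satisfies $(K^{(f',g')})^{(a,b)}=K^{(f'a,\,g'b)}$, this target equals $K^{(\tilde f,\tilde g)}$ with $\tilde f=f'\varphi f^{-1}\varphi^{-1}$ and $\tilde g=g'\varphi g^{-1}\varphi^{-1}$ in $G_0(V)$, and in particular $K^{(\tilde f,\tilde g)}\cong K$ is unital.

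Now apply Lemma \ref{le:1.9} to the pair $(\tilde f,\tilde g)$: there are $u,v\in K^\times$ with $\tilde f=L(u)$, $\tilde g=R(v)$ and $w:=uv$ the unique element satisfying $w^{-1}=1_{K^{(\tilde f,\tilde g)}}$, and the inverse isomorphism $\varphi^{-1}\colon K^{(\tilde f,\tilde g)}\to K$ is of the form $\varphi^{-1}=\sigma L(w)$ for some $\sigma\in{\rm Aut}(K)$. Inverting and renaming $\sigma^{-1}$ as $\sigma$ gives $\varphi=L(w^{-1})\sigma=L((uv)^{-1})\sigma$, the asserted formula for $\varphi$. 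Finally, solving $\tilde f=f'\varphi f^{-1}\varphi^{-1}$ for $f'$ gives $f'=L(u)\,\varphi f\varphi^{-1}$, and likewise $g'=R(v)\,\varphi g\varphi^{-1}$; substituting $\varphi=L((uv)^{-1})\sigma$ and $\varphi^{-1}=\sigma^{-1}L(uv)$ and simplifying the scalars using commutativity (for instance $u(uv)^{-1}=v^{-1}$) puts $f'$ and $g'$ into the stated form, completing (i) $\Rightarrow$ (ii). I expect this last step, read together with Lemma \ref{le:1.9}, to be the main obstacle: one must extract from that lemma \emph{both} the factorisation $w=uv$ of the heart datum \emph{and} the precise shape $\sigma L(w)$ of the isomorphism, and then push the operators $L(\cdot)$ through the two conjugations without interchanging the roles of $u$ and $v$ (equivalently, of $f$ and $g$).
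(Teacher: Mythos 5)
Your proof is correct and follows essentially the route the paper relies on (the paper defers to Petersson's proof of [P, 1.12]): use the conjugation lemma for homotopes with the pair $(f^{-1},g^{-1})$ to reduce to an isomorphism $K\to K^{(\tilde f,\tilde g)}$, then invoke Lemma \ref{le:1.9} to extract $u$, $v$, $w=uv$ and $\sigma$, and unwind. One point needs attention: your own computation gives $g'=L(v)\varphi g\varphi^{-1}=L(u^{-1})\sigma g\sigma^{-1}L(uv)$, \emph{not} $L(v^{-1})\sigma g\sigma^{-1}L(uv)$ as printed in the theorem, so you should not assert that this is ``the stated form''; the printed formula is a typo (it is inconsistent with Lemma \ref{le:2.8}, which uses $g'=L(u^{-1})\sigma g\sigma^{-1}L(uv)$, and with the direct check of (ii)$\Rightarrow$(i), where the two prefactors must multiply to $L((uv)^{-1})$ --- with the literal statement that implication would force $u=v$). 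Apart from this, the argument is sound: the regularity observation forcing $(f',g')\in G_0(V)$, the identity $(K^{(f',g')})^{(a,b)}=K^{(f'a,\,g'b)}$, and the application of the last clause of Lemma \ref{le:1.9} to $\varphi^{-1}$ are all exactly what is needed.
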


This generalizes \cite[1.12]{P} to any dimension, the proof is verbatim the same. Note that
the proof  of $(ii)\Rightarrow (i)$ in \cite[1.12]{P} needs that $K$ is associative and that $u\in {\rm Comm}(K)$.

\begin{theorem} (Isomorphism Criterium for Octonion Algebras) \cite[Proposition 4]{D}\label{thm:1.12quat}
Let $K\in {\rm Alg}(V)$ be a quaternion algebra or an octonion algebra over a field $F$ and $(f,g)$, $(f',g')\in G_0(V)$. Write $L=L_K$ and $R=R_K$. Then the following statements are equivalent:
\\ (i) $\varphi:K^{(f,g)}\rightarrow K^{(f',g')}$ is an isomorphism.
\\ (ii) $\varphi$ is a similitude of the norm $N_K$ of $K$ and  there are similitudes $\varphi_i$ of $N_K$ with $\varphi(xy)=
\varphi_1(x)\varphi_2(y)$ for all $x,y\in K$ and
$$f'=\varphi_1 f \varphi^{-1}, \quad g'=\varphi_2 g \varphi^{-1}$$
or equivalently,
$$f'=R_{\varphi_2(1_K)}^{-1}\varphi f \varphi^{-1}, \quad g'=L_{\varphi_1(1_K)}^{-1}\varphi g \varphi^{-1}.$$
\end{theorem}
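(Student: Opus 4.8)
The plan is to reduce the equivalence to one substantive fact — that a factorization $\varphi(xy)=\varphi_1(x)\varphi_2(y)$ by invertible maps forces $\varphi,\varphi_1,\varphi_2$ to be similitudes of $N_K$, which is the ``triality''\nobreakdash-type statement behind \cite[Proposition 4]{D} — and to dispatch everything else by direct substitution.

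For (ii)$\Rightarrow$(i): assuming the factorization $\varphi(xy)=\varphi_1(x)\varphi_2(y)$ together with $f'=\varphi_1 f\varphi^{-1}$ and $g'=\varphi_2 g\varphi^{-1}$, I would compute, for all $x,y\in K$, both $\varphi(xK^{(f,g)}y)=\varphi(f(x)g(y))=\varphi_1(f(x))\varphi_2(g(y))$ and $\varphi(x)K^{(f',g')}\varphi(y)=f'(\varphi(x))g'(\varphi(y))=\varphi_1(f(x))\varphi_2(g(y))$ (juxtaposition denoting the product in $K$); since $\varphi$ is bijective, being a similitude, this makes it an isomorphism, the other similitude hypotheses playing no role. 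The equivalence of the two displayed systems of conditions comes from specializing the factorization: $y=1_K$ gives $\varphi=R_{\varphi_2(1_K)}\varphi_1$, hence $\varphi_1=R_{\varphi_2(1_K)}^{-1}\varphi$ — legitimate because $N_K(\varphi_2(1_K))\neq0$ forces $\varphi_2(1_K)$ to be invertible in the composition algebra $K$ — and substituting into $f'=\varphi_1 f\varphi^{-1}$; the case $x=1_K$ gives $\varphi_2=L_{\varphi_1(1_K)}^{-1}\varphi$ and the formula for $g'$ symmetrically.

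For (i)$\Rightarrow$(ii): given an isomorphism $\varphi\colon K^{(f,g)}\to K^{(f',g')}$, set $\varphi_1:=f'\varphi f^{-1}$ and $\varphi_2:=g'\varphi g^{-1}$ in ${\rm Gl}(V)$. Replacing $x,y$ by $f^{-1}(x),g^{-1}(y)$ in $\varphi(f(x)g(y))=f'(\varphi(x))g'(\varphi(y))$ yields $\varphi(xy)=\varphi_1(x)\varphi_2(y)$ for all $x,y\in K$, whence $f'=\varphi_1 f\varphi^{-1}$ and $g'=\varphi_2 g\varphi^{-1}$ are immediate, and the second displayed system follows as above once the $\varphi_i$ are known to be similitudes. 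To obtain that, apply $N_K$ to the factorization and use $N_K(uv)=N_K(u)N_K(v)$: the nondegenerate quadratic forms $P=N_K\circ\varphi$, $Q=N_K\circ\varphi_1$, $S=N_K\circ\varphi_2$ satisfy $P(xy)=Q(x)S(y)$. Setting $y=1_K$ and $x=1_K$ gives $P=S(1_K)Q=Q(1_K)S$, with $S(1_K),Q(1_K)\neq0$ (else $P\equiv0$, contradicting nondegeneracy); hence $P(1_K)=S(1_K)Q(1_K)\neq0$ and $P(xy)=P(1_K)^{-1}P(x)P(y)$, so $\nu:=P(1_K)^{-1}P$ is a nondegenerate quadratic form with $\nu(1_K)=1$ and $\nu(xy)=\nu(x)\nu(y)$. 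Thus $(K,\cdot,\nu)$ is a composition algebra with the same underlying unital algebra as $(K,\cdot,N_K)$; since in a unital composition algebra every $x$ satisfies $x^2=b(x,1_K)x-n(x)1_K$ with $n$ the norm and $b$ its polarization (from $x\bar x=n(x)1_K$ and $\bar x=b(x,1_K)1_K-x$; see \cite{Sch}), comparing these quadratic equations for $\nu$ and for $N_K$ on elements $x\notin F1_K$ forces $\nu(x)=N_K(x)$, while $\nu(\alpha1_K)=\alpha^2=N_K(\alpha1_K)$; so $\nu=N_K$. Therefore $P=P(1_K)N_K$, $Q=P(1_K)S(1_K)^{-1}N_K$, $S=P(1_K)Q(1_K)^{-1}N_K$, i.e.\ $\varphi,\varphi_1,\varphi_2$ are similitudes of $N_K$.

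The main obstacle is exactly this last step — passing from the purely linear factorization to the similitude property — and the device that keeps it short is the observation that $(K,\cdot,\nu)$ inherits the composition-algebra axioms, together with the fact that the norm of a composition algebra is intrinsic to its multiplication. Everything else is bookkeeping; alternatively one may simply invoke \cite[Proposition 4]{D} for the similitude statement and carry out only that bookkeeping.
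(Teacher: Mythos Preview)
The paper does not supply its own proof of this theorem; it is stated with a citation to \cite[Proposition~4]{D} and used as a black box thereafter. Your proposal is therefore not competing with an argument in the paper but rather filling in what the paper outsources.

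Your argument is correct. The bookkeeping in both directions is exactly right: defining $\varphi_1:=f'\varphi f^{-1}$, $\varphi_2:=g'\varphi g^{-1}$ and substituting $f^{-1}(x),g^{-1}(y)$ recovers the factorization $\varphi(xy)=\varphi_1(x)\varphi_2(y)$; the passage between the two displayed systems via $\varphi_1=R_{\varphi_2(1_K)}^{-1}\varphi$ and $\varphi_2=L_{\varphi_1(1_K)}^{-1}\varphi$ is clean, and invertibility of $\varphi_2(1_K),\varphi_1(1_K)$ in $K$ is indeed guaranteed by the nonvanishing of their norms. The only substantive step is the similitude claim, and your route---pulling back $N_K$ along $\varphi,\varphi_1,\varphi_2$, extracting a second composition form $\nu$ on the \emph{same} unital algebra $K$, and then invoking the intrinsic nature of the norm via the degree-two relation $x^2-b(x,1_K)x+n(x)1_K=0$---is a standard and efficient way to recover exactly what Darp\"o's Proposition~4 provides. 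One minor point worth making explicit: the nondegeneracy of $P=N_K\circ\varphi$ (and hence of $\nu$) uses that $\varphi\in{\rm Gl}(V)$ and that $N_K$ is nondegenerate, so that the polar form of $\nu$ is nondegenerate and the Hurwitz machinery applies; you implicitly use this when concluding $S(1_K),Q(1_K)\neq0$, and it is also what ensures the quadratic relation determines $\nu$ uniquely.

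In short: the paper simply cites the result, while you give a self-contained proof whose core is the uniqueness of the composition norm. Either is acceptable here; your version has the advantage of making the paper independent of \cite{D} at this point.
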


As a side remark, we observe:

\begin{corollary} \label{cor:1.13} 
Let $K\in {\rm Alg}(V)$ be a unital associative 
algebra, such that its norm $N(x)=\det L(x)$  is a form of degree $n$ permitting composition, and $(f,g)\in G_0(V)$.
\\ (i) The pair
$$(\det (f)\,{\rm mod}\, N(K^\times), \det (g)\,{\rm mod}\, N(K^\times))$$
is an invariant of $K^{(f,g)}$ in $(F^\times/N(K^\times),F^\times/N(K^\times))$.
\\ (ii)  For all $a,b\in F^\times$, we have $K^{(f,g)}\cong K^{(af,bg)}$.
\end{corollary}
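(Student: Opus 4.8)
The plan is to obtain (i) by tracking the determinant of a multiplication operator through the passage to a principal Albert isotope and then through an isomorphism, pinning down the resulting scalar by evaluating at $1_K$; and to obtain (ii) from an explicit homothety. Part (i) can alternatively be read off from the Isomorphism Criterium, Theorem~\ref{thm:1.12}, in the commutative case (and from Theorem~\ref{thm:1.12quat} for quaternion algebras).

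For (i), write $B=K^{(f,g)}$ and $B'=K^{(f',g')}$. From $x\,B\,y=f(x)\,K\,g(y)=L_K(f(x))\bigl(g(y)\bigr)$ one has $L_B(x)=L_K(f(x))\circ g$, hence the identity $\det L_B(x)=N(f(x))\,\det(g)$ in $x$; dually $R_B(x)=R_K(g(x))\circ f$, so $\det R_B(x)=N(g(x))\,\det(f)$ (using $\det R_K=\det L_K=N$, which holds as $K$ is commutative, or more generally central simple). If $\varphi\colon B\to B'$ is an isomorphism then $L_{B'}(\varphi(x))=\varphi L_B(x)\varphi^{-1}$, so comparing determinants gives $N(f'(\varphi(x)))\,\det(g')=N(f(x))\,\det(g)$ for all $x\in V$. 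Replacing $x$ by $f^{-1}(x)$ and putting $\psi=f'\varphi f^{-1}\in{\rm Gl}(V)$ turns this into $N(\psi(x))=\bigl(\det(g)/\det(g')\bigr)\,N(x)$; evaluating at $x=1_K$, where $N(1_K)=\det({\rm id})=1$, gives $\det(g)/\det(g')=N(\psi(1_K))$. This scalar is nonzero, so $L_K(\psi(1_K))$ is invertible, and in a finite-dimensional unital associative algebra that forces $\psi(1_K)\in K^\times$; hence $\det(g)\equiv\det(g')\pmod{N(K^\times)}$. Running the same argument with right multiplications gives $\det(f)\equiv\det(f')\pmod{N(K^\times)}$. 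Thus the displayed pair is unchanged under isomorphism, i.e.\ it is an invariant of $K^{(f,g)}$. (Via Theorem~\ref{thm:1.12}: from $f'=L_K(v^{-1})\sigma f\sigma^{-1}L_K(uv)$ one reads off $\det(f')=N(v^{-1})N(uv)\det(f)=N(u)\det(f)$, using $\det(\sigma)\det(\sigma^{-1})=1$, that $N$ permits composition, and $v^{-1}uv=u$; likewise $\det(g')=N(u)\det(g)$, with $N(u)\in N(K^\times)$.)

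For (ii), scalars being central, the multiplication of $K^{(af,bg)}$ equals $ab$ times that of $K^{(f,g)}$, since $x\,K^{(af,bg)}\,y=af(x)\,K\,bg(y)=ab\,(f(x)\,K\,g(y))$. Hence the homothety $\varphi=(ab)^{-1}\,{\rm id}_V$ is an isomorphism $K^{(f,g)}\cong K^{(af,bg)}$: indeed $\varphi(x\,K^{(f,g)}\,y)=(ab)^{-1}(f(x)Kg(y))$, while $\varphi(x)\,K^{(af,bg)}\,\varphi(y)=ab\cdot(ab)^{-2}(f(x)Kg(y))=(ab)^{-1}(f(x)Kg(y))$. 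This is consistent with (i), as $\det(af)=a^{n}\det(f)$ and $a^{n}=N(a\cdot1_K)\in N(K^\times)$.

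The substantive ingredient of (i) is the evaluation at $1_K$, which identifies the similitude multiplier of $N$ as a genuine norm value; the one technical annoyance is the $f$-component when $K$ is noncommutative and not central simple, where $\det R_K$ and $N=\det L_K$ may differ as forms and one must instead pass to $K^{op}$ (with $(K^{(f,g)})^{op}=(K^{op})^{(g,f)}$) or observe that $\det R_K$ and $\det L_K$ have the same image on $K^\times$. Since every $K$ relevant to this paper is a field extension, this does not arise here, and the remaining steps are routine.
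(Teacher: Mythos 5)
Your proof is correct, but for both parts you argue differently from the paper. The paper's proof of (i) is exactly the parenthetical remark at the end of your second paragraph: it compares determinants in the critical relations of Theorem~\ref{thm:1.12}, getting $\det(f')=N(v^{-1})N(uv)\det(f)=N(u)\det(f)$ and likewise for $g'$, and stops there. Your main argument is instead intrinsic: you compute $\det L_{K^{(f,g)}}(x)=N(f(x))\det(g)$ and $\det R_{K^{(f,g)}}(x)=N(g(x))\det(f)$, transport these under an isomorphism, and pin down the similitude multiplier as a genuine norm value by evaluating at $1_K$ and noting that $\det L_K(\psi(1_K))\neq 0$ forces $\psi(1_K)\in K^\times$. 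This buys something real: Theorem~\ref{thm:1.12} is stated (and its proof of (ii)$\Rightarrow$(i) is flagged as valid) only for \emph{commutative} associative $K$, whereas the corollary is stated for unital associative $K$; your direct argument covers that generality, modulo the $\det R_K$ versus $\det L_K$ issue that you correctly isolate and which is vacuous for the field extensions actually used in the paper. For (ii) the paper simply defers to Petersson's Corollary~1.14 via Theorem~\ref{thm:1.12} (specializing $u,v$ to scalars, which relies on the asymmetric form $g'=L(u^{-1})\sigma g\sigma^{-1}L(uv)$ appearing in Lemma~\ref{le:2.8} rather than the symmetric formula misprinted in Theorem~\ref{thm:1.12}); your explicit homothety $\varphi=(ab)^{-1}\mathrm{id}_V$ is a clean direct verification that sidesteps that machinery entirely and is arguably preferable. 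No gaps.
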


\begin{proof} (i) The proof generalizes the one of \cite[Corollary 1.13]{P}: let $f,g,f',g'\in {\rm Gl}(V)$. We compare the determinants in Theorem \ref{thm:1.12} (ii) and obtain
$$\det (f')=\det(L(v^{-1})\det(\sigma )\det(f )\det(\sigma^{-1})\det(L(uv))=\det(f )\det(L(v^{-1})\det(L(uv))$$
and analogously,
$\det (g')=\det(g )\det(L(v^{-1})\det(L(uv)).$
Using that  $N(x)=\det L(x)$ for $x\in V$  we get
$\det (f')=\det(f )N(v^{-1})N(uv)$ and $\det (g')=\det(g )N(v^{-1})N(uv).$
 Since $N$ permits composition
  it follows that $\det (f')=\det(f )N(u)$ and $\det (g')=\det(g )N(u),$ and we obtain the assertion.
  \\ (ii) This generalizes \cite[Corollary 1.14]{P}.  The proof is identical to the one of \cite[Corollary 1.14]{P} and uses Theorem \ref{thm:1.12}.
\end{proof}

\begin{corollary} \label{cor:1.14} 
 Let $K\in {\rm Alg}(V)$ be a quaternion or octonion algebra  over a field $F$ and $(f,g)\in G_0(V)$. Then
$$K^{(f,g)}\cong K^{(af,bg)}$$
for all $a,b\in F^\times$.
\end{corollary}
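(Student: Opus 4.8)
The plan is to obtain the isomorphism directly from the Isomorphism Criterion for Octonion Algebras (Theorem~\ref{thm:1.12quat}), which applies since $K$ is a quaternion or octonion algebra and $(f,g),(af,bg)\in G_0(V)$. For quaternion $K$ one could instead invoke Corollary~\ref{cor:1.13}(ii), those algebras being associative; but the point of stating Corollary~\ref{cor:1.14} separately is to cover the octonion case, where Corollary~\ref{cor:1.13} is unavailable.

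First I would note that every nonzero homothety $c\,\mathrm{id}_V$ with $c\in F^\times$ is a similitude of the norm $N_K$ (with multiplier $c^2$), and that for scalars $c,c_1,c_2\in F^\times$ the factorization identity $\varphi(xy)=\varphi_1(x)\varphi_2(y)$ with $\varphi=c\,\mathrm{id}_V$, $\varphi_i=c_i\,\mathrm{id}_V$ holds exactly when $c=c_1c_2$, by bilinearity of the multiplication of $K$. To match the target data $f'=af$, $g'=bg$ in Theorem~\ref{thm:1.12quat}(ii) I would then take
\[
\varphi=(ab)^{-1}\,\mathrm{id}_V,\qquad \varphi_1=b^{-1}\,\mathrm{id}_V,\qquad \varphi_2=a^{-1}\,\mathrm{id}_V .
\]
These are bijective similitudes of $N_K$; they satisfy $\varphi(xy)=\varphi_1(x)\varphi_2(y)$ since $(ab)^{-1}=b^{-1}a^{-1}$; and, homotheties commuting with $f$ and $g$, we get $\varphi_1 f\varphi^{-1}=(b^{-1}\cdot ab)f=af$ and $\varphi_2 g\varphi^{-1}=(a^{-1}\cdot ab)g=bg$. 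Hence all hypotheses of Theorem~\ref{thm:1.12quat}(ii) are met, and the theorem yields an isomorphism $\varphi:K^{(f,g)}\to K^{(af,bg)}$.

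I do not expect a genuine obstacle: the whole content is the bookkeeping that homotheties are norm similitudes whose multipliers combine correctly, so that the constraints of Theorem~\ref{thm:1.12quat}(ii) can be solved by scalars alone. One could in fact bypass the criterion and check by hand that $\varphi=(ab)^{-1}\,\mathrm{id}_V$ is an algebra isomorphism $K^{(f,g)}\to K^{(af,bg)}$, both sides of $\varphi(x\,K^{(f,g)}\,y)=\varphi(x)\,K^{(af,bg)}\,\varphi(y)$ reducing to $(ab)^{-1}\big(f(x)\,K\,g(y)\big)$; routing through Theorem~\ref{thm:1.12quat} merely keeps the argument parallel to the proof of Corollary~\ref{cor:1.13}(ii).
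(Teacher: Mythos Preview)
Your proof is correct and follows essentially the same approach as the paper: apply Theorem~\ref{thm:1.12quat} with scalar homotheties as the similitudes $\varphi,\varphi_1,\varphi_2$. The paper takes $\varphi_1=a\,\mathrm{id}$, $\varphi_2=b\,\mathrm{id}$, $\varphi=ab\,\mathrm{id}$, obtains $K^{(f,g)}\cong K^{(b^{-1}f,a^{-1}g)}$, and then substitutes $a^{-1},b^{-1}$ for $b,a$; your direct choice of $\varphi_1=b^{-1}\,\mathrm{id}$, $\varphi_2=a^{-1}\,\mathrm{id}$ simply bypasses that final substitution.
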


\begin{proof}
 Let $a,b\in F^\times$, $c=ab$, then $\varphi_1(x)=ax$, $\varphi_2(x)=bx$, $\varphi_1(x)=cx$ are norm similarities of $N$ and $\varphi(xy)=\varphi_1(x)\varphi_2(y)$ for all $x,y\in K$. We have
$f'(x)=\varphi_1 (f (\varphi^{-1}(x))))=\varphi_1 (f (c^{-1}x))=ac^{-1}f(x)=b^{-1}x$ and $ g'(x)=\varphi_2( g (\varphi^{-1}(x))))=
\varphi_2( g (c^{-1}x))=bc^{-1}f(x)=a^{-1}x$.  By  Theorem \ref{thm:1.12quat}, then $K^{(f,g)}\cong K^{(b^{-1}f,a^{-1}g)}$.
Choosing $a^{-1}$ and $b^{-1}$ instead of $b,a$ in the calculation yields the assertion.
\end{proof}

We can now ``sort''  nonassociative regular algebras over $F$ of dimension $n$ by their unital heart. This breaks up the classification of regular algebras with alternative unital hearts into two steps: (i) the classification of
the possible alternative unital hearts of dimension $n$ up to $(p,q)$-isotopy,
 and then (ii), the classification of the algebras with the same alternative unital heart.
Unlike in the $n=2$ case, however, even the class of unital associative algebras is now in general formidable, and does not only contain commutative associative algebras anymore as it does for $n=2$.

Any classification  of the unital associative algebras $K\in {\rm Alg}(V)$  of dimension $n$, and of the octonion algebras $K\in {\rm Alg}(V)$,
yields a classification of the possible unital hearts of dimension $n$ up to isomorphism.

\begin{remark}
The restriction to unital commutative associative algebras, respectively, to octonion algebras as unital hearts is clearly limiting the scope of the classification we can achieve with our method.  Even in small dimensions, and over an algebraically closed field of characteristic not two, 19 of the 59 unital associative algebras of dimension 5 listed up to isomorphism  in \cite{Ma} are commutative. For a complete lists of low dimensional complex associative nonunital algebras see
\cite{RRB}.
\ignore{which was motivated by trying to classify diassociative algebras, an associative version of Leibniz algebras, which were introduced
in the search of an “obstruction” to the periodicity of algebraic $K$-theory, and have applications in  classical geometry, non-commutative geometry and physics.}
\end{remark}

%
%

\section{Invertible linear maps}\label{sec:invertiblemaps}

\subsection{Invertible linear maps for Galois $C_n$-algebras}

 A commutative associative algebra  $K\in {\rm Alg}(V)$ of dimension $n$ is called {\it \'{e}tale},
if $K\cong F_1\times\dots\times F_r$ for some finite separable field extensions $F_1,\dots, F_r$ of a field  $F$. Let $K$ be endowed with an action by
 a finite group $G$ of $F$-automorphisms. Then $K$ is called a \emph{$G$-algebra} \cite[(18.B.), p.~287]{KMRT}.
An \'{e}tale algebra $K$ over $F$  of dimension $n$ for which the order of $G$ equals $n$, then $K$ is a {\it Galois $G$-algebra} over $F$,  if 
 ${\rm Fix}(G)=\{x\in K\,|\, g(x)=x \text{ for all } g\in G \}=F$ \cite[(18.15), p.~288]{KMRT}.
A Galois $G$-algebra structure on a field $K$ exists if and only if the field extension $K/F$ is Galois with Galois
group isomorphic to $G$  \cite[(18.16), p.~288]{KMRT}.

Let $V$ be an $n$-dimensional $F$-vector space. Let $C_n$ denote the cyclic group of order $n$. Let $K\in {\rm Alg}(V)$ be a Galois $C_n$-algebra, i.e.  a cyclic Galois field extension of $F$ of degree $n$ with Galois group  ${\rm Gal}(K/F)=\langle\tau \rangle$ or an $n$-dimensional \'etale algebra over $F$ with an automorphism  $\tau\in {\rm Aut}_F(K)$ of order $n$, i.e. where $G=\langle \tau\rangle$, with norm $N$ and trace $T$.
Take the cyclic algebra  $A_0=(K/F, \tau, 1)=K[t;\sigma]/(t^n-1)$ of degree $n$ with reduced norm $N_{A_0/F}: A_0 \rightarrow F$,
$N_{A_0/F}(x) = {\rm det}(R_x).$
 Consider $A_0$ as a left $K$-module with basis $\{1,t, \ldots, t^{n-1}\}$.

\begin{theorem} (cf. \cite[Proposition 2.2]{P} for $n=2$)\label{thm:2.2} 
Let $V$ be an $n$-dimensional $F$-vector space. Let $K\in {\rm Alg}(V)$ be a 
Galois $C_n$-algebra with generating automorphism $\tau$.
Then the map $(K/F, \tau, 1) \to {\rm End}_F(V)$,
\[
\sum_{i=0}^{n-1}x_it^i \mapsto \sum_{i=0}^{n-1}L(x_i)\tau^i;
\]
is an isomorphism  of central simple algebras. In particular, we can write
 $${\rm End}_F(V)=L(K) \oplus L(K)\tau \oplus \dots \oplus L(K)\tau^{n-1}$$
as a direct sum of $F$-subvector spaces and
$$\det (\sum_{i=0}^{n-1}L(x_i)\tau^i)=N_{A_0/F}(x_0 + x_1t + \cdots + x_{n-1}t^{n-1}) .$$
\end{theorem}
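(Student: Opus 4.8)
The plan is to realize the asserted map as an $F$-algebra isomorphism $\phi\colon A_0 \to {\rm End}_F(V)$, and then to read off both the direct sum decomposition and the determinant formula from standard facts about central simple algebras. First I would set $\phi\bigl(\sum_{i=0}^{n-1}x_it^i\bigr)=\sum_{i=0}^{n-1}L(x_i)\tau^i$ and verify that $\phi$ is a homomorphism of unital $F$-algebras. It is visibly $F$-linear, it sends $1_{A_0}=1_K$ to $L(1)={\rm id}_V$, and on the subalgebra $K\subseteq A_0$ it is the faithful representation $L\colon K\to {\rm End}_F(V)$. Since $A_0$ is generated over $F$ by $K$ together with $t$, subject to the relations $tx=\tau(x)t$ for $x\in K$ and $t^n=1$, multiplicativity of $\phi$ reduces to the two identities $\tau\,L(x)=L(\tau(x))\,\tau$ and $\tau^n={\rm id}_V$: the first is immediate from $\tau(xy)=\tau(x)\tau(y)$, and the second holds because $\tau$ has order $n$ as an $F$-automorphism of $K$. (Concretely, expanding $\bigl(\sum_iL(x_i)\tau^i\bigr)\bigl(\sum_jL(y_j)\tau^j\bigr)$ with these identities and reducing exponents mod $n$ reproduces $\phi$ applied to the product computed inside $A_0$.)

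Next I would promote $\phi$ to an isomorphism of central simple algebras. As is already reflected in the statement, $A_0=(K/F,\tau,1)$ is a central simple $F$-algebra of degree $n$; hence $\ker\phi$, being a two-sided ideal that does not contain $1$, must be zero, and since $\dim_F A_0=n^2=\dim_F {\rm End}_F(V)$ the map $\phi$ is bijective, i.e. an isomorphism of central simple $F$-algebras. Applying $\phi$ to the left $K$-module decomposition $A_0=\bigoplus_{i=0}^{n-1}Kt^i$ and using $\phi(Kt^i)=L(K)\tau^i$ then yields ${\rm End}_F(V)=L(K)\oplus L(K)\tau\oplus\cdots\oplus L(K)\tau^{n-1}$ as an (internal) direct sum of $F$-subspaces.

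For the determinant formula I would observe that both sides are reduced norms. The form $x\mapsto\det(R_x)$, with $R_x$ the right multiplication on $A_0$ regarded as a free left $K$-module of rank $n$, is the usual matrix-determinant description of the reduced norm $\mathrm{Nrd}_{A_0}$, while on ${\rm End}_F(V)\cong M_n(F)$ the reduced norm is the ordinary determinant. Since the reduced norm of a central simple algebra is intrinsic — it equals $\det(\,\cdot\otimes 1)$ under any scalar extension that splits the algebra, and is therefore preserved by $F$-algebra isomorphisms — we obtain $\det(\phi(x))=\mathrm{Nrd}_{{\rm End}_F(V)}(\phi(x))=\mathrm{Nrd}_{A_0}(x)=N_{A_0/F}(x)$ for every $x\in A_0$, and specializing to $x=\sum_ix_it^i$ gives the claim. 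Alternatively one can check it by hand after extending scalars along $K/F$: using $K\otimes_FK\cong\prod_{\sigma\in\langle\tau\rangle}K$, each $L(x_i)\otimes 1$ becomes the diagonal matrix $\mathrm{diag}\bigl((\sigma(x_i))_\sigma\bigr)$ and $\tau\otimes 1$ becomes a cyclic permutation matrix, so that $\det\bigl(\sum_iL(x_i)\tau^i\bigr)$ turns into the classical circulant-type determinant defining $N_{A_0/F}$.

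I expect the only delicate points to be, first, the case where the Galois $C_n$-algebra $K$ is not a field: there the injectivity of $\phi$ cannot be obtained from Dedekind independence of ${\rm id},\tau,\dots,\tau^{n-1}$ as characters of a field, so one must genuinely use the simplicity of $A_0$ (or first reduce to the field case via the structure of Galois $C_n$-algebras). Second, checking that the stated $N_{A_0/F}$ really is the reduced norm, rather than, say, a power of it, deserves a sentence, but it is just the standard embedding of a cyclic algebra into $M_n(K)$ afforded by its left $K$-module structure; for $n=2$ it already recovers $N_{K/F}(x_0)-N_{K/F}(x_1)$, matching $\cite{P}$.
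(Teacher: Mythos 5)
Your proposal is correct and follows essentially the same route as the paper: verify the map is an algebra homomorphism from the multiplication rule in $(K/F,\tau,1)$, conclude it is an isomorphism because it is a nonzero homomorphism between central simple algebras of equal dimension (your kernel-plus-dimension-count argument is exactly what that assertion unpacks to), and deduce the determinant formula from the invariance of the reduced norm under isomorphisms. The extra care you take for the non-field (\'etale) case and the identification of $\det(R_x)$ with the reduced norm are worthwhile elaborations of steps the paper leaves implicit, but not a different method.
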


\begin{proof}
The map
$\sum_{i=0}^{n-1}x_it^i \mapsto \sum_{i=0}^{n-1}L(x_i)\tau^i$
is a
homomorphism. This is an immediate consequence of the way multiplication in $(K/F, \tau, 1)$ is defined; $(x_it^i)(x_jt^j) = (x_i\tau^i(x_j))t^{i+j}$ for all $i,j\in \{0,\dots,n-1\}$. 
A homomorphism between two central simple algebras of the same dimension is an isomorphism  of central simple algebras.

The second statement is now
obvious. The third follows from the fact that norms of central simple algebras are preserved
by isomorphisms, but was also independently previously observed in \cite{S13}.
\end{proof}

\begin{proposition} 
Assume we are in the situation of Theorem \ref{thm:2.2}. Suppose that $n=rs$ for some positive integers $s\geq 2$, $s\not=n$. Let $E={\rm Fix}(\tau^s)$ and view $K$  as an $r$-dimensional $E$-vector space. Then the map
\[
\sum_{i=0}^{r-1}x_{si}t^{si} \mapsto \sum_{i=0}^{r-1}L(x_{si})\tau^{si};
\]
induced by restricting the map $(K/F, \tau, 1) \to {\rm End}_F(V)$  in Theorem \ref{thm:2.2} to $B_0=(K/E, \tau^s, 1)$,
is an isomorphism  between the central simple algebras $(K/E, \tau^s, 1)$ and ${\rm End}_E(K)$ over $E$, where both algebras are now viewed as $F$-algebras. In particular, we can write
 $${\rm End}_E(K)=L(K) \oplus L(K)\tau^s \oplus \dots \oplus L(K)\tau^{s(r-1)}$$
as a direct sum of $E$-subvector spaces and
$$\det (L(x_0) + L(x_s)\tau^s + \dots + L(x_{s(r-1)})\tau^{s(r-1)})=N_{B_0/E}(x_0 + x_1t^s + \cdots + x_{s(r-1)}t^{s(r-1)}) .$$
\end{proposition}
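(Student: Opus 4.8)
The plan is to reduce the statement to Theorem~\ref{thm:2.2} itself, applied with base ring $E$ in place of $F$. The first (and essentially only non-formal) step will be to set up the intermediate Galois data: writing $r=n/s$, the subgroup $\langle\tau^s\rangle$ of $\langle\tau\rangle$ has order $r$, so by the general theory of Galois $G$-algebras \cite[(18.B)]{KMRT} the fixed algebra $E={\rm Fix}(\tau^s)$ is an $s$-dimensional \'etale $F$-algebra, $K$ is an $r$-dimensional \'etale $E$-algebra, and $K$ together with $\tau^s|_K$ (which has order $r$) is a Galois $C_r$-algebra over $E$. The hypotheses $s\geq 2$, $s\neq n$ merely guarantee $F\subsetneq E\subsetneq K$, so that $B_0=(K/E,\tau^s,1)$ is a genuinely new cyclic algebra sitting strictly between $K$ and $A_0$.

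Granting this, I would invoke Theorem~\ref{thm:2.2} for the Galois $C_r$-algebra $K/E$ with generating automorphism $\tau^s$. It produces at once an isomorphism of central simple $E$-algebras $B_0=(K/E,\tau^s,1)\to{\rm End}_E(K)$ sending $\sum_{i=0}^{r-1}y_i(t')^i$ to $\sum_{i=0}^{r-1}L(y_i)(\tau^s)^i=\sum_{i=0}^{r-1}L(y_i)\tau^{si}$ (with $t'$ the generator of $B_0$ and $L=L_K$), the internal direct sum decomposition ${\rm End}_E(K)=L(K)\oplus L(K)\tau^s\oplus\dots\oplus L(K)\tau^{s(r-1)}$ of $E$-subspaces, and the identity $\det\bigl(\sum_i L(x_{si})\tau^{si}\bigr)=N_{B_0/E}\bigl(\sum_i x_{si}t^{si}\bigr)$, where $\det$ is now the determinant of an $E$-linear endomorphism of $K$. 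One should record here that each $L(y)$ with $y\in K$, and $\tau^s$ itself, are $E$-linear---the former since $E\subseteq K$ and $K$ is commutative associative, the latter since $\tau^s$ fixes $E={\rm Fix}(\tau^s)$ pointwise---so that ${\rm End}_E(K)$ is indeed a subalgebra of ${\rm End}_F(V)$ containing all of these operators.

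Finally I would check that this isomorphism is exactly the asserted restriction. One identifies $B_0$ with the $F$-subalgebra $K\oplus Kt^s\oplus\dots\oplus Kt^{s(r-1)}$ of $A_0=(K/F,\tau,1)$ via $t'\mapsto t^s$: this subspace is closed under multiplication because $(xt^{si})(yt^{sj})=x\,\tau^{si}(y)\,t^{s(i+j)}$ and $t^n=1$, and the assignment is multiplicative because $\tau^{si}$ depends on $i$ only through $\tau^s$. Under this identification, the formula $\sum_i x_it^i\mapsto\sum_i L(x_i)\tau^i$ of Theorem~\ref{thm:2.2}, evaluated on elements supported in degrees $0,s,\dots,s(r-1)$, is literally $\sum_i x_{si}t^{si}\mapsto\sum_i L(x_{si})\tau^{si}$, i.e.\ the isomorphism of the previous paragraph; being a restriction of an algebra homomorphism, it is automatically one. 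I expect the main (indeed the only) obstacle to be the very first step, namely that $K/E$ is again Galois with cyclic group of order $r$: this is transparent Galois theory when $K$ is a field, but in the split \'etale case it needs the formalism of Galois $G$-algebras and the behaviour of ${\rm Fix}$ under subgroups, which is precisely what \cite{KMRT} supplies. Everything after that is a transcription of Theorem~\ref{thm:2.2} over the base $E$, the one subtlety being that ``$\det$'' in the determinant identity must be read as a determinant over $E$.
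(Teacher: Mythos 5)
Your proposal is correct and follows essentially the same route as the paper: apply the argument of Theorem~\ref{thm:2.2} over the base $E$ to the Galois $C_r$-algebra $K/E$ with generating automorphism $\tau^s$, and then observe that the resulting isomorphism is exactly the restriction of the isomorphism of Theorem~\ref{thm:2.2} to the $F$-subalgebra $B_0 \subseteq A_0$ (with image ${\rm End}_E(K) \subseteq {\rm End}_F(V)$). Your write-up is somewhat more explicit than the paper's proof, which simply states that the isomorphism is ``shown analogously'' and that restriction yields it; in particular you usefully record that $K/E$ is again a Galois $C_r$-algebra and that the determinant in the final identity must be read over $E$.
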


\begin{proof}
The fact that the map is an  isomorphism  between the central simple algebras $(K/E, \tau^s, 1)$ and ${\rm End}_E(K)$ of degree $r$ over $E$ is shown analogously as in Theorem \ref{thm:2.2}. The algebra $(K/E, \tau^s, 1)$ is an $F$-subalgebra of  $(K/F, \tau, 1)$ and so is ${\rm End}_E(K)$  of ${\rm End}_F(V)$. Restricting an isomorphisms   $(K/F, \tau, 1) \to {\rm End}_F(V)$  to the subalgebra $(K/E, \tau^s, 1)$ yields exactly the isomorphism  $(K/E, \tau^s, 1) \to {\rm End}_E(K)$. 
\end{proof}

\begin{corollary}\label{cor:2.9}
Assume we are in the situation of Theorem \ref{thm:2.2}.
 Then every $f\in {\rm End}_F(V)$ can be uniquely written in the form
 \begin{equation}\label{iso}
 f= \sum_{i=0}^{n-1}L(y_i)\tau^i
 \end{equation}
 for some $(y_0,y_1,\dots,y_{n-1})\in K^n$ and lies in ${\rm Gl}(V)$  if and only if $N_{A_0}(y_0+y_1t+\dots+y_{n-1}t^{n-1})\not=0$.
 \\ If $n=rs$ for $s\geq 2$, $s\not=n$, $E={\rm Fix}(\tau^s)$, then  every $f\in {\rm End}_E(V)$ can be written in the form
 $$f=L(y_0)+L(y_s)\tau^s +  \dots + L(y_{s(r-1)})\tau^{s(r-1)}$$
 for some $(y_0,y_s,\dots,y_{s(r-1)})\in K^r$,
is an isomorphism  between $(K/E, \tau^s, 1)$ and ${\rm End}_E(K)$.
 \end{corollary}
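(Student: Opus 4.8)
The plan is to deduce both statements directly from Theorem \ref{thm:2.2} and the Proposition immediately preceding this corollary, since those two results already contain all the structural content; what remains is essentially bookkeeping about invertibility. First I would address the decomposition claim. By Theorem \ref{thm:2.2} we have the direct sum decomposition ${\rm End}_F(V)=L(K)\oplus L(K)\tau\oplus\dots\oplus L(K)\tau^{n-1}$ as $F$-vector spaces, so each $f\in{\rm End}_F(V)$ is written \emph{uniquely} in the form \eqref{iso} with $(y_0,\dots,y_{n-1})\in K^n$; uniqueness is exactly the statement that the sum is direct, and $L$ is injective on $K$ because $K$ is unital (so $L(x)1=x$ recovers $x$).

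Next, for the invertibility criterion: the isomorphism of Theorem \ref{thm:2.2} sends $\sum_i y_it^i$ to $f=\sum_i L(y_i)\tau^i$, and an algebra isomorphism carries invertible elements to invertible elements in both directions. Hence $f\in{\rm Gl}(V)$ if and only if $\sum_i y_it^i$ is invertible in the cyclic algebra $A_0=(K/F,\tau,1)$. Since $A_0$ is a central simple $F$-algebra (in fact $A_0\cong{\rm End}_F(V)$ is split), an element is invertible precisely when its reduced norm is nonzero; and by the third assertion of Theorem \ref{thm:2.2}, $\det f=N_{A_0/F}(\sum_i y_it^i)$. So $f\in{\rm Gl}(V)$ iff $N_{A_0}(y_0+y_1t+\dots+y_{n-1}t^{n-1})\neq 0$, which is the claim. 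Alternatively one can skip the reduced-norm remark entirely and just observe that $f$ is invertible iff $\det f\neq 0$ and then invoke $\det f=N_{A_0}(\dots)$ directly; I would phrase it this second, more elementary way.

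For the second part, with $n=rs$, $s\geq 2$, $s\neq n$ and $E={\rm Fix}(\tau^s)$: the preceding Proposition gives ${\rm End}_E(K)=L(K)\oplus L(K)\tau^s\oplus\dots\oplus L(K)\tau^{s(r-1)}$ as $E$-vector spaces, together with the isomorphism $(K/E,\tau^s,1)\xrightarrow{\sim}{\rm End}_E(K)$ sending $\sum_{i=0}^{r-1}x_{si}t^{si}$ to $\sum_{i=0}^{r-1}L(x_{si})\tau^{si}$. Thus every $f\in{\rm End}_E(V)={\rm End}_E(K)$ has the asserted form $f=L(y_0)+L(y_s)\tau^s+\dots+L(y_{s(r-1)})\tau^{s(r-1)}$ with $(y_0,y_s,\dots,y_{s(r-1)})\in K^r$, again uniquely, and the displayed correspondence is precisely that isomorphism restricted/transported along these bases; if desired one adds that $f$ is then $E$-invertible iff $N_{B_0/E}(y_0+y_st^s+\dots+y_{s(r-1)}t^{s(r-1)})\neq 0$, by the same determinant argument applied over $E$.

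I do not expect a genuine obstacle here: the corollary is a direct packaging of the two results it cites, and the only point requiring a sentence of care is the equivalence ``$f$ invertible $\iff$ corresponding element of the cyclic algebra is invertible,'' which follows because a ring isomorphism preserves units — or, even more directly, from $\det f = N_{A_0}(\cdot)$ together with the fact that an endomorphism of a finite-dimensional vector space is bijective iff its determinant is nonzero. If anything needs slight attention it is making sure the statement about the second map is read as an identification of that map \emph{with} the isomorphism from the Proposition rather than as an independent assertion to be reproved.
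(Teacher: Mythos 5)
Your proposal is correct and follows exactly the route the paper intends: the corollary is stated without proof precisely because it is an immediate repackaging of Theorem \ref{thm:2.2} (direct sum decomposition for uniqueness, $\det f = N_{A_0/F}(\cdot)$ for the invertibility criterion) and of the preceding Proposition for the $E$-linear case. Your added remarks on the injectivity of $L$ and on reading the second assertion as an identification with the Proposition's isomorphism are sensible clarifications, not deviations.
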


\subsection{Invertible linear maps for any abelian Galois field extension}

We can push this approach a bit further: Let $V$ be an $n$-dimensional $F$-vector space.
Let $K\in {\rm Alg}(V)$ be a Galois field extension of $F$ of degree $n$ with abelian Galois group  $G={\rm Gal}(K/F)=\{\tau_1,\dots,\tau_n\}$.

Take the crossed product algebra  $A_0=(K/F, G, \Phi)$ of degree $n$ with the 2-cocycle  $\Phi$ defined by $\Phi(\sigma,\tau)=1$ for all $\sigma,\tau \in G$ and  reduced norm $N_{A_0/F}: A_0 \rightarrow F$, $N_{A_0/F}(x) = {\rm det}(R(x)).$
 Consider $A_0$ as a left $K$-vector space with basis $\{1,u_{\tau_1}, \ldots, u_{\tau_{n-1}}\}$.
  A straightforward calculation shows that the matrix  $R(x)$  of right multiplication by
 $x = x_0 + x_1 u_{\tau_1} + \cdots + x_{n-1}u_{\tau_{n-1}},$ $ x_i \in K$ in $A_0$ is given by
\[ R(x)=\left (\begin {array}{cccccc}
x_0 &  \tau_1(x_{n-1}) & ... &  \tau_{n-1}(x_{1})\\
x_1 & \tau_1(x_0) & ... &   \tau_{n-1}(x_{2})\\
x_2 & \tau_1(x_{1}) & ... 
 & \tau_{n-1}(x_{3})\\
...& ...  & ... & ...\\
x_{n-2} &  \tau_1(x_{n-3})  & ...&  \tau_{n-1}(x_{n-1})\\
x_{n-1} &  \tau_1(x_{n-2})  & ...& \tau_{n-1}(x_{0})\\
\end {array}\right ).\]

\begin{theorem} \label{thm:2.2Galois} 
 Let $V$ be an $n$-dimensional $F$-vector space.
Let $K\in {\rm Alg}(V)$ be a Galois field extension of $F$ of degree $n$ with abelian Galois group $G=\{\tau_1,\dots,\tau_n\}$.
The map $(K/F, G, \Phi) \to {\rm End}_F(V),$
$$
(x_0,\dots,x_{n-1})\mapsto \sum_{i=0}^{n-1}L(x_i)\tau^i,
$$
is an isomorphism  of central simple algebras. In particular, we can write
 $${\rm End}_F(V)=L(K) \oplus L(K)\tau_1 \oplus \dots \oplus L(K)\tau_{n-1}$$
as a direct sum of $F$-subvector spaces and
$$\det ( \sum_{i=0}^{n-1}L(x_i)\tau^i)=N_{A_0/F}(x_0 + x_1 u_{\tau_1} + \cdots + x_{n-1}u_{\tau_{n-1}}) .$$
\end{theorem}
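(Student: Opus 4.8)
The plan is to mimic the proof of Theorem \ref{thm:2.2} almost verbatim, the only extra care being that the abelian group $G$ now has $n$ distinct automorphisms $\tau_1,\dots,\tau_n$ (which I take with $\tau_n = \mathrm{id}$, matching the stated basis $\{1,u_{\tau_1},\dots,u_{\tau_{n-1}}\}$) in place of the powers $1,\tau,\dots,\tau^{n-1}$ of a single generator. First I would define the $F$-linear map $\rho\colon (K/F,G,\Phi)\to {\rm End}_F(V)$ on the left-$K$-basis by $u_{\tau_i}\mapsto \tau_i$ and $x\mapsto L(x)$ for $x\in K$, i.e.\ $\sum_i x_i u_{\tau_i}\mapsto \sum_i L(x_i)\tau_i$. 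To see this is an $F$-algebra homomorphism it suffices (by $F$-bilinearity) to check it on products of basis elements: in $(K/F,G,\Phi)$ one has $(x\,u_{\sigma})(y\,u_{\tau}) = x\,\sigma(y)\,\Phi(\sigma,\tau)\,u_{\sigma\tau} = x\,\sigma(y)\,u_{\sigma\tau}$ since $\Phi\equiv 1$, while in ${\rm End}_F(V)$ one has $(L(x)\sigma)(L(y)\tau) = L(x)L(\sigma(y))\sigma\tau = L(x\sigma(y))(\sigma\tau)$ because $\sigma L(y) = L(\sigma(y))\sigma$ for a field automorphism $\sigma$. These agree, and $\rho$ sends $1\in A_0$ to $L(1)=\mathrm{id}$, so $\rho$ is a unital $F$-algebra homomorphism.

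Next I would invoke the standard fact that $A_0=(K/F,G,\Phi)$ with $\Phi\equiv 1$ is a central simple $F$-algebra of dimension $n^2$ (it is a crossed product algebra, in fact split since the cocycle is trivial, but centrality and simplicity are all that is needed). Since ${\rm End}_F(V)$ is also central simple over $F$ of dimension $n^2$, and any nonzero homomorphism out of a simple algebra is injective, $\rho$ is injective; comparing dimensions, $\rho$ is an isomorphism of central simple $F$-algebras. This immediately gives the direct-sum decomposition ${\rm End}_F(V)=L(K)\oplus L(K)\tau_1\oplus\dots\oplus L(K)\tau_{n-1}$, as it is the image under $\rho$ of the decomposition $A_0 = K\oplus Ku_{\tau_1}\oplus\dots\oplus Ku_{\tau_{n-1}}$.

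For the determinant formula, I would argue as in Theorem \ref{thm:2.2}: the reduced norm of a central simple algebra is intrinsic and hence preserved by the isomorphism $\rho$, so $N_{A_0/F}(x) = N_{{\rm End}_F(V)/F}(\rho(x))$ for all $x\in A_0$. For the split algebra ${\rm End}_F(V)$ the reduced norm is the ordinary determinant, which yields $\det\big(\sum_{i}L(x_i)\tau_i\big) = N_{A_0/F}\big(\sum_i x_i u_{\tau_i}\big)$; concretely, $N_{A_0/F}(x) = \det R(x)$ with $R(x)$ the explicit matrix displayed before the theorem, since the regular (right-multiplication) representation computes the reduced norm on a split algebra up to the appropriate power (here the first power, as the representation is on an $n$-dimensional $K$-space of $F$-dimension $n^2 = (\deg A_0)\cdot n$). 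I expect the only mild subtlety is bookkeeping the indexing conventions—matching $u_{\tau_n}=1$ with $\tau_n=\mathrm{id}$ and making sure the sum ``$\sum_{i=0}^{n-1}L(x_i)\tau^i$'' in the statement is read as $\sum_{i} L(x_i)\tau_i$ over the $n$ group elements—together with recalling that for a field automorphism $\sigma$ one has the commutation rule $\sigma\circ L(y) = L(\sigma(y))\circ\sigma$, which is the computational heart of the homomorphism check; everything else is a routine transcription of the cyclic case.
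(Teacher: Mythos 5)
Your proposal is correct and follows essentially the same route as the paper: verify the homomorphism property via the commutation rule $\sigma L(y)=L(\sigma(y))\sigma$ (the paper phrases this through the matrix $R(x)$ and $\tau_i R(x)=R(\tau_i(x))\tau_i$, but it is the same computation), conclude that a homomorphism between central simple algebras of equal dimension is an isomorphism, and deduce the determinant formula from the invariance of the reduced norm under isomorphism. Your extra remarks on indexing ($u_{\tau_n}=1$, reading $\tau^i$ as $\tau_i$) and on why $\det R(x)$ computes the reduced norm on the nose are accurate clarifications of points the paper leaves implicit.
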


\begin{proof}
The map
$(x_0,\dots,x_{n-1})\mapsto  \sum_{i=0}^{n-1}L(x_i)\tau^i$
is a homomorphism.
This is an immediate consequence of the structure of the matrix $R(x)$ which defines the right multiplication in $A_0$
and the fact that we have  $\tau_i R(x)=R(\tau_i (x))\tau_i$ for all $x\in K$ and suitable integers $i$.
A homomorphism between two central simple algebras of the same dimension is an isomorphism  of central simple algebras.

The second statement is now
obvious. The third follows again from the fact that norms of central simple algebras are preserved
by isomorphisms.
\end{proof}

\begin{corollary}\label{cor:2.9'}
In the situation of Theorem \ref{thm:2.2Galois}, every $f\in {\rm End}_F(V)$ can be uniquely written in the form
 \begin{equation}\label{iso}
 f= \sum_{i=0}^{n-1}L(y_i)\tau^i
 \end{equation}
 for some $(y_0,y_1,\dots,y_{n-1})\in K^n$ and lies in ${\rm Gl}(V)$  if and only if $N_{A_0}(y_0 + y_1 u_{\tau_1} + \cdots + y_{n-1}u_{\tau_{n-1}})\not=0$.
 \end{corollary}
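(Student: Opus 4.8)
The plan is to read the statement off directly from Theorem \ref{thm:2.2Galois}, exactly as Corollary \ref{cor:2.9} is read off from Theorem \ref{thm:2.2}. Write $\Psi\colon A_0=(K/F,G,\Phi)\to{\rm End}_F(V)$ for the isomorphism of central simple $F$-algebras furnished by that theorem, so that $\Psi\big(\sum_{i=0}^{n-1}x_iu_{\tau_i}\big)=\sum_{i=0}^{n-1}L(x_i)\tau_i$, where I normalise $u_{\tau_0}=1$, $\tau_0={\rm id}$ to match the index range $0,\dots,n-1$ with $G=\{\tau_0=\mathrm{id},\tau_1,\dots,\tau_{n-1}\}$.

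For the existence-and-uniqueness part, I would note that $A_0$ is free of rank $n$ as a left $K$-module on the basis $\{1,u_{\tau_1},\dots,u_{\tau_{n-1}}\}$ and that $\Psi$ is an $F$-linear bijection; hence every $f\in{\rm End}_F(V)$ has a unique preimage $y=\sum_{i=0}^{n-1}y_iu_{\tau_i}$ with $y_i\in K$, which is precisely the assertion that $f$ can be written uniquely as $f=\sum_{i=0}^{n-1}L(y_i)\tau_i$. Equivalently, this is just the coordinate-wise reformulation of the direct sum decomposition ${\rm End}_F(V)=L(K)\oplus L(K)\tau_1\oplus\cdots\oplus L(K)\tau_{n-1}$ already recorded in Theorem \ref{thm:2.2Galois}.

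For the invertibility criterion, I would use that an $F$-endomorphism of the finite-dimensional space $V$ lies in ${\rm Gl}(V)$ if and only if it is a unit of the ring ${\rm End}_F(V)$, hence, since $\Psi$ is a ring isomorphism, if and only if its preimage $y$ is a unit of $A_0$. As $A_0$ is central simple over $F$, a unit is exactly an element of nonzero reduced norm, so $f\in{\rm Gl}(V)$ iff $N_{A_0/F}(y)\neq 0$. If one prefers to sidestep the ``reduced norm detects units'' fact, one can instead invoke the last displayed equality of Theorem \ref{thm:2.2Galois} together with $N_{A_0/F}(y)=\det R(y)$ to get $\det f=\det\big(\sum_{i}L(y_i)\tau_i\big)=N_{A_0/F}(y_0+y_1u_{\tau_1}+\cdots+y_{n-1}u_{\tau_{n-1}})$, so that $f$ is bijective iff $\det f\neq 0$.

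I do not expect any real obstacle here: the corollary is a formal consequence of Theorem \ref{thm:2.2Galois}, and the only steps requiring care are bookkeeping, namely matching the indexing of $G$ with the exponents $0,\dots,n-1$, and the routine identification of ``bijective'' with ``invertible in ${\rm End}_F(V)$'' and with ``nonzero reduced norm / determinant''. All of these are standard.
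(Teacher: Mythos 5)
Your proposal is correct and matches the paper's (implicit) reasoning: the paper states this corollary without proof precisely because it is the immediate coordinate-wise reformulation of the isomorphism and determinant formula in Theorem \ref{thm:2.2Galois}, which is exactly how you argue. Both of your routes to the invertibility criterion (units of a central simple algebra have nonzero reduced norm, or directly $\det f = N_{A_0/F}(y)$) are sound, and the second is the one the displayed formula in the theorem is set up to deliver.
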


  We often write $N_{A_0/F}(y)=N_{A_0/F}(y_0+y_1t+\dots+y_{n-1}t^{n-1})$ from now on.

 Analogous results holds for Galois $G$-algebras.

\section{Regular algebras with a Galois $C_n$-algebra as unital heart}\label{sec3}

 Let $V$ be an $n$-dimensional $F$-vector space.
Let $K\in {\rm Alg}(V)$ be a Galois $C_n$-algebra. Put $N=N_{K/F}$.
Define the set of idempotents ${\rm Id}(K)=\{d\in K\,|\, d^2=d\}$, and note that  ${\rm Id}(K)=\{0,1\}$ if $K$ is a field. 

\begin{lemma}\label{le:2.8} 
 Let $K \in {\rm Alg}(V)$ be a Galois $C_n$-algebra, with distinguished $F$-automorphism $\tau$ of order $n$. Write $f,g,f^\prime,g^\prime \in {\rm Gl}(V)$ as
\[
f = \sum_{i=0}^{n-1}L(y_i)\tau^i,\;g = \sum_{i=0}^{n-1}L(z_i)\tau^i,\;f^\prime = \sum_{i=0}^{n-1}L(y_i^\prime)\tau^i,\;g^\prime = \sum_{i=0}^{n-1}L(z_i^\prime)\tau^i
\]
with $y_i,z_i,y_i^\prime,z_i^\prime \in K$ for $0 \le i \le n - 1$.
 \\ (i) We have $$K^{(f,g)} \cong K^{(f^\prime,g^\prime)} $$
 if and only if there are $u,v \in K^\times$, $\sigma \in {\rm Aut}_F(K)$ such that
 $$y_i^\prime = \tau^i(u)\tau^i(v)v^{-1}\sigma(y_i)$$
for $0 \le i \le n - 1$ and
$$g'=L(u^{-1})\sigma g\sigma^{-1}L(uv).$$
 This is equivalent to saying that
there are $u,v \in K^\times$, $\sigma \in {\rm Aut}_F(K)$ such that
$$y_i^\prime = \tau^i(u)\tau^i(v)v^{-1}\sigma(y_i), \quad z_i^\prime = \tau^i(u)\tau^i(v)u^{-1}\sigma(z_i)$$
for $0 \le i \le n - 1$.
 \\ (ii) Let $K$ be a field and $y_0=d,y_0'=d'\in {\rm Id}(K)$. If the statements in (i) hold then  $d'=\sigma(d)$ and $d=1$ forces $d'=u=1$.
\\ (iii) If  the statements in (i) hold then for all $i\in \{0,\dots,n-1\}$,  $y_i$ is invertible  if and only if $y_i'$ is, and nonzero if and only so is $y_i'$.
 \end{lemma}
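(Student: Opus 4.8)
The plan is to read everything off from the Isomorphism Criterium (Theorem~\ref{thm:1.12}) after expanding the relevant maps in the direct sum decomposition ${\rm End}_F(V)=L(K)\oplus L(K)\tau\oplus\cdots\oplus L(K)\tau^{n-1}$ of Theorem~\ref{thm:2.2}; the crucial point supplied there is that in such an expansion the coefficients are \emph{uniquely} determined, so they may be compared termwise. By Theorem~\ref{thm:1.12}, $K^{(f,g)}\cong K^{(f',g')}$ holds precisely when there are $u,v\in K^\times$ and $\sigma\in{\rm Aut}_F(K)$ with $f'=L(v^{-1})\sigma f\sigma^{-1}L(uv)$, $g'=L(u^{-1})\sigma g\sigma^{-1}L(uv)$ and $\varphi=L((uv)^{-1})\sigma$; the identity for $g'$ is already the second displayed equation of the lemma, so (i) reduces to rewriting the identity for $f'$ coefficientwise (and the final ``equivalent'' formulation to doing the same for $g'$).

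To rewrite $f'=L(v^{-1})\sigma f\sigma^{-1}L(uv)$, write $f=\sum_{i=0}^{n-1}L(y_i)\tau^i$ and push the factors through using the rules $\sigma L(x)=L(\sigma(x))\sigma$ and $\tau^iL(x)=L(\tau^i(x))\tau^i$ for $x\in K$, together with the commutation of $\sigma$ with $\tau$ (automatic when $K$ is a field, where ${\rm Aut}_F(K)={\rm Gal}(K/F)=\langle\tau\rangle$ is abelian). This gives $\sigma f\sigma^{-1}=\sum_iL(\sigma(y_i))\tau^i$ and then, moving $L(uv)$ to the left past each $\tau^i$ via $\tau^iL(uv)=L(\tau^i(uv))\tau^i$ and gathering the three scalars into one $L$ (legitimate since $L(K)$ is commutative), $f'=\sum_iL\big(\tau^i(u)\tau^i(v)v^{-1}\sigma(y_i)\big)\tau^i$. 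Comparing with $f'=\sum_iL(y_i')\tau^i$ and invoking the uniqueness from Theorem~\ref{thm:2.2} yields $y_i'=\tau^i(u)\tau^i(v)v^{-1}\sigma(y_i)$ for all $i$; the identical computation applied to $g=\sum_iL(z_i)\tau^i$ turns $g'=L(u^{-1})\sigma g\sigma^{-1}L(uv)$ into $z_i'=\tau^i(u)\tau^i(v)u^{-1}\sigma(z_i)$, which gives the ``equivalent'' form of (i).

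Parts (ii) and (iii) then follow at once from the explicit formula for $y_i'$. For (iii), the scalar $c_i:=\tau^i(u)\tau^i(v)v^{-1}$ is a unit of $K$ (a product of units, as $\tau^i$ is an automorphism and $u,v\in K^\times$), and $\sigma$ maps units to units and $0$ to $0$; since multiplication by the unit $c_i$ is a bijection of $K$ fixing $0$, $y_i'=c_i\sigma(y_i)$ is invertible iff $y_i$ is and is zero iff $y_i$ is. For (ii), set $i=0$: since $\tau^0={\rm id}$ we get $d'=y_0'=u\,\sigma(d)$; if $K$ is a field then ${\rm Id}(K)=\{0,1\}$, so $d=0$ gives $d'=0=\sigma(d)$, while $d=1$ gives $d'=u\in\{0,1\}$, forcing $d'=1$ (as $u\neq0$) and hence $u=1$ and $d'=\sigma(d)$. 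The only genuine subtlety is the bookkeeping in the second paragraph — $L(K)$ commutes with itself but not with $\tau$, so the order of factors when shuttling $L(uv)$ past $\tau^i$ must be tracked, and the uniqueness in Theorem~\ref{thm:2.2} is what licenses the termwise comparison; the commutation of $\sigma$ and $\tau$ is the one non-formal ingredient and is free in the field case, which is the setting of primary interest.
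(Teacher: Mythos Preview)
Your proof is correct and follows essentially the same route as the paper's: both invoke Theorem~\ref{thm:1.12}, expand $f'=L(v^{-1})\sigma f\sigma^{-1}L(uv)$ in the decomposition of Theorem~\ref{thm:2.2} using $\sigma L(x)=L(\sigma(x))\sigma$ and $\tau^iL(x)=L(\tau^i(x))\tau^i$, and then compare coefficients termwise by uniqueness. Your explicit flagging of the $\sigma$--$\tau$ commutation (left tacit in the paper) is a useful addition, and your handling of (ii) via a direct case split on $d\in\{0,1\}$ is marginally cleaner than the paper's idempotency computation $d'=d'^2=u^2\sigma(d)$, but the substance is identical.
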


 \begin{proof}
 (i) By Theorem \ref{thm:1.12},
 $K^{(f,g)}\cong K^{(f',g')}$
  if and only if there are $u,v\in K^\times$,
 $\sigma\in {\rm Gal}(K/F)$, such that $g'=L(u^{-1})\sigma g\sigma^{-1}L(uv)$ and
 $$L(y_0')+L(y_1')\tau + L(y'_2)\tau^2 + \dots + L(y'_{n-1})\tau^{n-1}$$
 $$= L(v^{-1}) \sigma( L((y_0)+L(y_1)\tau + L(y_2)\tau^2 + \dots + L(y_{n-1})\tau^{n-1} )\sigma^{-1} L(uv)$$
 $$= L(v^{-1})    ( L(\sigma(y_0))+L(\sigma(y_1))\tau + L(\sigma(y_2))\tau^2 + \dots + L(\sigma(y_{n-1}))\tau^{n-1} ) L(uv) $$
 $$= L(u\sigma(y_0))+L(\tau(u)\tau(v) v^{-1}\sigma(y_1))\tau +
 L(\tau^2(u)\tau^2(v) v^{-1}\sigma(y_2))\tau^2 + $$
 $$\dots + L(\tau^{n-1}(u)\tau^{n-1}(v) v^{-1}\sigma(y_{n-1}))\tau^{n-1} . $$
 Compare components as in (\ref{iso})  to get
 $$y_i^\prime = \tau^i(u)\tau^i(v)v^{-1}\sigma(y_i), $$
for $0 \le i \le n - 1$. The second equivalence is shown analogously.
 \\ (ii) Let $y_0=d$, $y_0'=d'\in {\rm Id}(K)$. If the statements are fulfilled, we have $u \sigma(d)=d'=d'^2=u^2\sigma(d)^2=u^2 \sigma(d)$. Since $K$ is a field, if $d\not=0$ then this implies
  $u=1$ thus $d'=\sigma(d).$ If $d=0$ then $d'=\sigma(d)$ is trivially true, since $u\not=0$.
 The statement is now obvious.
 \\ (iii) is trivial.
 \end{proof}

We call  $$y_i^\prime = \tau^i(u)\tau^i(v)v^{-1}\sigma(y_i), \text{ for all } 0 \le i \le n - 1$$
 and
$$g'=L(u^{-1})\sigma g\sigma^{-1}L(uv)$$
\emph{critical relations}.

In view of Lemma \ref{le:2.8}, the subsets
$$ N^0(f) =\{i \in [0,n-1]\mid y_i = 0\},$$
$$N^1(f) =\{i \in [0,n-1]\mid y_i \in K \setminus (K^\times \cup \{0\})\},$$
$$N^2(f) =\{i \in [0,n-1]\mid y_i \in K^\times\} $$
for $[0,n-1] = \{0,1,\dots,n - 1\}$ are isomorphism invariants of $K^{(f,g)}$ that disjointly cover $[0,n-1]$, ditto for $N^j(g)$, $j = 0,1,2$.

 \begin{lemma}\label{le:iso=iso}
 Let $K\in {\rm Alg}(V)$ be a cyclic Galois field extension. 
 Suppose that $f = \sum_{i=0}^{n-1}L(y_i)\tau^i \in {\rm Gl}(V)$.
 Let
 $u,v\in K^\times$ and $\sigma\in {\rm Gal}(K/F)$, and put
 $$y_i'=\tau^i(u)\tau^i(v) v^{-1}\sigma(y_i)$$ for all $i\in\{0,\dots,n-1\}$. Define
 $f'= \sum_{i=0}^{n-1}L(y_i')\tau^i $. Then $f'\in {\rm Gl}(V)$.
  \end{lemma}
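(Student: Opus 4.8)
The plan is to exhibit $f'$ as a composite of invertible $F$-linear maps. Regarding $\sigma \in {\rm Gal}(K/F)$ as an $F$-linear bijection of $V = K$, I claim that
$$f' = L(v^{-1})\,\sigma\, f\, \sigma^{-1}\, L(uv).$$
Granting this, $f'$ is a product of five elements of ${\rm Gl}(V)$: the maps $L(v^{-1})$ and $L(uv)$, which are invertible because $v, uv \in K^\times$ and $K$ is a field; the automorphisms $\sigma$ and $\sigma^{-1}$; and $f$, invertible by hypothesis. Hence $f' \in {\rm Gl}(V)$.

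To check the identity I would expand the right-hand side using the normal form of Corollary \ref{cor:2.9}. Write $f = \sum_{i=0}^{n-1}L(y_i)\tau^i$. Since $\sigma$ is a field automorphism we have $\sigma L(y_i)\sigma^{-1} = L(\sigma(y_i))$, and since ${\rm Gal}(K/F) = \langle\tau\rangle$ is abelian, $\sigma$ commutes with every $\tau^i$; therefore $\sigma f \sigma^{-1} = \sum_{i=0}^{n-1}L(\sigma(y_i))\tau^i$. Using $\tau^i L(uv) = L(\tau^i(uv))\tau^i$ one then gets
$$L(v^{-1})\,\sigma f \sigma^{-1}\,L(uv) = \sum_{i=0}^{n-1}L\bigl(v^{-1}\sigma(y_i)\tau^i(uv)\bigr)\tau^i ,$$
and since $K$ is commutative $v^{-1}\sigma(y_i)\tau^i(uv) = \tau^i(u)\tau^i(v)v^{-1}\sigma(y_i) = y_i'$ for each $i$, so the right-hand side equals $\sum_{i=0}^{n-1}L(y_i')\tau^i = f'$. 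This is exactly the computation already performed inside the proof of Lemma \ref{le:2.8}(i), here repackaged so as to conclude invertibility of $f'$ without reference to any partner map $g$. Equivalently, one can transport everything along the isomorphism $(K/F,\tau,1) \to {\rm End}_F(V)$ of Theorem \ref{thm:2.2}: if $a = y_0 + y_1 t + \cdots + y_{n-1}t^{n-1}$, then $a$ is a unit of $A_0$ because its image $f$ is invertible, the element $v^{-1}\bar\sigma(a)(uv)$ (with $\bar\sigma$ the $F$-algebra automorphism of $A_0$ extending $\sigma$) is again a unit as a product of units, and its image in ${\rm End}_F(V)$ is $f'$.

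I do not expect a genuine obstacle here: the statement is really just the assertion that the ``critical relation'' acts on the $f$-side by pre- and post-composition with elements of ${\rm Gl}(V)$. The only points deserving a word of care are the commutation $\sigma\tau^i = \tau^i\sigma$ (which is where cyclicity, hence abelianness, of the Galois group is used) and the bookkeeping that pushing $L(uv)$ past $\tau^i$ twists it into $L(\tau^i(uv))$; both are routine, so the write-up can be kept to a couple of lines.
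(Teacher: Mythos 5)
Your proof is correct and rests on the same key identity as the paper's, namely the factorization $f' = L(v^{-1})\,\sigma f\sigma^{-1}\,L(uv)$ as a composite of invertible maps (the paper extracts this from Lemma \ref{le:2.8} and Theorem \ref{thm:1.12} rather than verifying it directly, and its displayed formula suppresses the $\sigma$'s, but the argument is the same). Your direct computation is, if anything, cleaner and more self-contained than the paper's detour through the isomorphism criterion.
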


  \begin{proof}
   Put $g = \sigma = id_V$, $g^\prime = L(v)$ and $\varphi = L((uv)^{-1})$. Then the hypotheses  combined with Lemma \ref{le:2.8} show that
\[
\varphi : K^{(f,id_V)} \overset{\sim} \longrightarrow K^{(f^\prime,g^\prime)}
\]
is an isomorphism. By Theorem \ref{thm:1.12}, therefore $f^\prime = L(v^{-1})f L(uv)$ is bijective.
  \end{proof}

The first step towards a classification of algebras that have a cyclic Galois field extension of degree $n$ as unital heart is the following observation that lists the different types of  $(f,g)\in G_0(V)$.

 \begin{theorem}  \label{le:2.9fields}  
 Let $K\in {\rm Alg}(V)$ be a cyclic Galois field extension of degree $n$.
 Let $(f,g)\in G_0(V)$. Then there exist $(f',g')\in G_0(V) $ such that
 $K^{(f,g)}=K^{(f',g')}$ and $f'$ satisfies  exactly one of the following conditions:
 \begin{enumerate}
\item 
 $f'=id +L(y_1)\tau + L(y_2)\tau^2 + \dots + L(y_{n-1})\tau^{n-1}$
 such that $N^2(f') = [0,n-1]$
 and $N_{A_0/F}(1+y_1t+\dots+y_{n-1}t^{n-1})\not=0$.
 \item 
  $f'=id +L(y_1)\tau + L(y_2)\tau^2 + \dots + L(y_{n-1})\tau^{n-1}$, $N^0(f') $ is a nonempty set and
 $N_{A_0/F}(y)\not=0$.
 Every different subset $N^0(f') $ 
 yields a different subtype.
 \item 
 $f'=id $. 
 \item 
 $f'=L(y_1)\tau + L(y_2)\tau^2 + \dots + L(y_{n-1})\tau^{n-1}$  such that $N^2(f') = [0,n-1]$ 
 and $N_{A_0/F}(y)\not=0$.
 \item 
  $f'=L(y_1)\tau + L(y_2)\tau^2 + \dots + L(y_{n-1})\tau^{n-1}$ and $N^0(f') $ $ N^0(f')$ contains more than one element and
   $N_{A_0/F}(y)\not=0$.
 Every different subset $N^0(f') $ 
 yields a different subtype.
 
 \end{enumerate}
 \end{theorem}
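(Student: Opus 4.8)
The plan is to put $f$ into the stated normal form using the only change of the pair $(f,g)$ that leaves the algebra $K^{(f,g)}$ literally unchanged: rescaling by a left multiplication. First, by Corollary~\ref{cor:2.9} write $f=\sum_{i=0}^{n-1}L(y_i)\tau^i$ with a unique tuple $(y_0,\dots,y_{n-1})\in K^n$, the invertibility of $f$ being precisely the condition $N_{A_0/F}(y_0+y_1t+\dots+y_{n-1}t^{n-1})\ne 0$. Since $K$ is a field, each $y_i$ is either $0$ or a unit, so $N^1(f)=\emptyset$ and $[0,n-1]=N^0(f)\sqcup N^2(f)$; whether or not $0\in N^0(f)$ will drive the case split.

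Second, I record the admissible move. For $d\in K^\times$, commutativity and associativity of $K$ give, for all $x,y\in V$,
\[
x\,K^{(L(d)f,\,L(d^{-1})g)}\,y=\big(d\,f(x)\big)\big(d^{-1}g(y)\big)=f(x)g(y)=x\,K^{(f,g)}\,y,
\]
so $K^{(f,g)}=K^{(L(d)f,\,L(d^{-1})g)}$, and $(L(d)f,L(d^{-1})g)\in G_0(V)$ since $L(d)\in{\rm Gl}(V)$. As $L(d)L(y_i)=L(dy_i)$, replacing $f$ by $L(d)f$ simply multiplies every coefficient $y_i$ by $d$, and the condition $N_{A_0/F}\ne 0$ survives for the new tuple. (In fact, $K$ being a field, these scalings are the only pairs $(f',g')$ with $K^{(f,g)}=K^{(f',g')}$, which already makes the five cases mutually exclusive; but this is not strictly needed.)

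Now split on whether $y_0=0$. If $y_0\in K^\times$, put $d=y_0^{-1}$, $f'=L(d)f$, $g'=L(d^{-1})g$; the constant coefficient of $f'$ is $L(y_0^{-1}y_0)=id$, and every other coefficient $y_i'=dy_i$ is again $0$ or a unit. If all of $y_1',\dots,y_{n-1}'$ are units we are in case~(1); if some but not all of them vanish we are in case~(2), with $N^0(f')$ a nonempty proper subset of $[1,n-1]$; if all vanish then $f'=id$, which is case~(3). If $y_0=0$, keep $(f',g')=(f,g)$ (further rescaling by any $L(d)$ changes nothing relevant); since $f\ne 0$, not all $y_i$ vanish, so $0\in N^0(f')$ together with either: all $y_i$ $(i\ge 1)$ units, so $N^0(f')=\{0\}$ and we are in case~(4); or at least one further $y_i=0$, so $|N^0(f')|\ge 2$ and we are in case~(5). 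This covers every $(f,g)\in G_0(V)$.

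It remains to check exclusivity, and here the only real work is boundary bookkeeping. Cases~(1)--(3) all have constant coefficient $id$ while~(4)--(5) have constant coefficient $0$, so the two blocks are disjoint; inside the first block the three alternatives are $|N^0(f')|=0$, $0<|N^0(f')|<n-1$, and $|N^0(f')|=n-1$ --- the last being exactly $f'=id$, which is why it must be extracted from the ``$N^0(f')$ nonempty'' clause of~(2) and listed as~(3); inside the second block they are $|N^0(f')|=1$ and $|N^0(f')|\ge 2$, so the side condition of~(4) is to be read as $N^0(f')=\{0\}$ and that of~(5) as $|N^0(f')|\ge 2$. These are visibly exclusive and exhaustive for a given $f'$. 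Finally, within cases~(2) and~(5) distinct subsets $N^0(f')$ yield distinct subtypes by Lemma~\ref{le:2.8}(iii): an isomorphism $K^{(f,g)}\cong K^{(f',g')}$ forces $y_i=0$ exactly when $y_i'=0$, so $N^0$ (and likewise $N^2$) is an isomorphism invariant of the algebra. The main obstacle is therefore not computation but reconciling the loosely phrased side conditions of~(2),~(4),~(5) with the clean invariant $N^0(f')$; once that is pinned down, everything follows from the zero/unit pattern of the coefficient tuple.
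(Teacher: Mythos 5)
Your proof is correct, and the skeleton is the same as the paper's: decompose $f=\sum_i L(y_i)\tau^i$ via Corollary~\ref{cor:2.9}, normalize the constant coefficient to $id$ when $y_0\in K^\times$, split on the zero pattern of the remaining coefficients (which, $K$ being a field, is recorded entirely by $N^0(f')$ since $N^1(f')=\emptyset$), and invoke Lemma~\ref{le:2.8}(iii) to see that $N^0$ separates the subtypes. The one genuine difference is the normalization move. The paper applies the isomorphism criterion (Lemma~\ref{le:2.8} with $u=y_0^{-1}$, $\sigma=id$, $v=1$), which replaces $f$ by $fL(y_0^{-1})$, i.e.\ $y_i\mapsto \tau^i(y_0^{-1})y_i$, and only yields $K^{(f,g)}\cong K^{(f',g')}$; you instead use the two-sided rescaling $(f,g)\mapsto(L(d)f,L(d^{-1})g)$ with $d=y_0^{-1}$, which leaves the multiplication literally unchanged, $K^{(f,g)}=K^{(f',g')}$. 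Your move is the one that actually delivers the equality asserted in the statement (the paper's proof, read literally, only delivers isomorphism), and your parenthetical observation that these rescalings are the \emph{only} pairs giving literal equality is correct and gives exclusivity for free. Your bookkeeping on the boundary cases --- reading (4) as $N^0(f')=\{0\}$ rather than the (impossible) $N^2(f')=[0,n-1]$, and carving $f'=id$ out of clause (2) --- matches what the paper's own proof does, so there is no gap.
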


 Two different subtypes in (2), respectively in (5), yield non-isomorphic Albert isotopes $K^{(f,g)}$.

 \begin{proof}
  By Theorem \ref{thm:2.2}, $f = \sum_{i=0}^{n-1}L(y_i)\tau^i$
 for some $y=y_0+y_1t+\dots+y_{n-1}t^{n-1}\in A_0$ such that $N_{A_0/F}(y)\not=0$.
 Moreover, if $y_0\in K^\times$ then there is $y_0^{-1}\in K^\times$ such that
 $1=y_0^{-1}y_0$.
 Applying the isomorphism criterium with $u=y_0^{-1}$, $\sigma=id$ we may assume $y_0=1$. 
 This leads us to two cases/types of isotopes, those with $y_0=1$ and those with $y_0=0$. The map $f'$ we construct in the following is an isomophism by Lemma \ref{le:iso=iso}.
 \\ Case $y_0=1$:
 \\
 (a) $y_i\in K^\times$ for all $i\in \{ 1,\dots,n-1\}$,
\\ (b) $ N^0(f') =\{i_1,\dots,i_s \}$ is non-empty.
\\ (c)  For all $i\geq 1$,  $y_i=0$.
 \\ Case $y_0=0$:
 \\ (a) $ N^2(f')$ has $n-1$ elements.
\\ (b) $ N^0(f')$ contains more than one element.
\end{proof}

This lists grows when $K\in {\rm Alg}(V)$ is, more generally, a Galois $C_n$-algebra.

\begin{proposition}  \label{le:2.9C_n} 
 Let $K\in {\rm Alg}(V)$ be a Galois $C_n$-algebra and $\tau$ e some distinguished automorphism of degree $n$.
 Let $(f,g)\in G$. Then there exist $(f',g')\in G_0(V)$ such that
 $K^{(f,g)}=K^{(f',g')}$ and $f'$  satisfies exactly one of the following conditions:
 \begin{enumerate}
  \item 
   $f'=id +L(y_1)\tau + L(y_2)\tau^2 + \dots + L(y_{n-1})\tau^{n-1}$ and  $N_{A_0/F}(y)\not=0$. Every $f'$ with different $N^0(f'), N^1(f'), N^2(f')$ yields a different subtype.

  \item 
  There is $y_0\in K\setminus K^\times$, $y_0\not=0$, such that $f'=L(y_0)+L(y_1)\tau + L(y_2)\tau^2 + \dots + L(y_{n-1})\tau^{n-1}$ and $N_{A_0/F}(y)\not=0$. Every $f'$ with  different $N^0(f'), N^1(f'), N^2(f')$ yields a different subtype.
  
 \item 
  $f'=L(y_1)\tau + L(y_2)\tau^2 + \dots + L(y_{n-1})\tau^{n-1}$ and $N_{A_0/F}(1+y_1t+\dots+y_{n-1}t^{n-1})\not=0$.
    Every $f'$ with  different  $N^0(f'), N^1(f'), N^2(f')$ yields a different subtype.

 \end{enumerate}
\end{proposition}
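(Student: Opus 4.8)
The plan is to mimic the proof of Theorem \ref{le:2.9fields} but keep track of the fact that ${\rm Id}(K)$ is now larger than $\{0,1\}$, so that $y_0$ cannot in general be normalized to $1$ or $0$. First I would apply Theorem \ref{thm:2.2} to write $f = \sum_{i=0}^{n-1} L(y_i)\tau^i$ for some $y = y_0 + y_1 t + \cdots + y_{n-1}t^{n-1} \in A_0$ with $N_{A_0/F}(y) \neq 0$. The dichotomy governing which of the three cases occurs is determined by $y_0$: whether $y_0 \in K^\times$, whether $y_0 \in K\setminus(K^\times\cup\{0\})$, or whether $y_0 = 0$. In the first case, multiplying by $u = y_0^{-1}$ and taking $\sigma = id$ in the critical relations of Lemma \ref{le:2.8} normalizes $y_0$ to $1$, giving case (1); in the second case $y_0$ stays a non-invertible nonzero element, giving case (2); in the third case $y_0 = 0$, giving case (3). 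In case (3) one would additionally want to rule out $f' = L(y_1)\tau + \cdots$ with $N^2$ too small: but note that since $N_{A_0/F}(y) \neq 0$ with $y_0 = 0$, not all $y_i$ can be non-invertible, and in fact the hypothesis $N_{A_0/F}(1 + y_1 t + \cdots + y_{n-1}t^{n-1}) \neq 0$ in the statement is exactly what guarantees $f'$ as written is in ${\rm Gl}(V)$ — this should be checked via Corollary \ref{cor:2.9} (or Lemma \ref{le:iso=iso} applied appropriately), noting that shifting the constant term of $f'$ does not change the $\tau$-components $y_1,\dots,y_{n-1}$ and only affects whether the norm is nonzero.

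Next I would verify that the three cases are mutually exclusive and that the map $f'$ produced in each case actually lies in ${\rm Gl}(V)$: exclusivity is immediate from the trichotomy on $y_0$ being invertible / non-invertible-nonzero / zero (which, by Lemma \ref{le:2.8}(ii)--(iii), is preserved under the critical relations when $\sigma$ fixes the relevant idempotent structure), and membership in ${\rm Gl}(V)$ follows from Lemma \ref{le:iso=iso}, since each $f'$ is obtained from $f$ by a substitution $y_i' = \tau^i(u)\tau^i(v)v^{-1}\sigma(y_i)$ of the type covered there. For the claim that different $N^0(f'), N^1(f'), N^2(f')$ yield different subtypes, I would invoke the remark following Lemma \ref{le:2.8}: these three subsets are isomorphism invariants of $K^{(f,g)}$ because under the critical relations $y_i$ is invertible (resp. zero, resp. neither) precisely when $y_i'$ is, as $\tau^i(u)\tau^i(v)v^{-1}$ and $\sigma$ are invertible ring elements / automorphisms — Lemma \ref{le:2.8}(iii).

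The main obstacle I anticipate is the bookkeeping around the idempotent $y_0$ in case (2), and more precisely the claim of mutual exclusivity between cases (1) and (2). When $K$ is merely étale rather than a field, applying the isomorphism criterium with a general $\sigma \in {\rm Aut}_F(K)$ and general $u,v$ can move a non-invertible $y_0$ around within ${\rm Id}(K)$ or within $K \setminus (K^\times \cup \{0\})$, but — crucially — cannot make a non-invertible element invertible, since $y_0' = u\sigma(y_0)$ and $u$ is a unit while $\sigma$ is an automorphism. So the invertibility class of $y_0$ is genuinely an invariant, and this is what separates (1) from (2) and (2) from (3); I would state this carefully using Lemma \ref{le:2.8}(iii) with $i = 0$. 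One subtlety worth a sentence: in case (2) one might still hope to normalize $y_0$ to a canonical idempotent, but over an étale algebra the orbit of $y_0$ under $y_0 \mapsto u\sigma(y_0)$ is not a single point, which is exactly why the classification in the $C_n$-algebra case is coarser than in the field case — this is the price flagged in the sentence "This list grows when $K$ is, more generally, a Galois $C_n$-algebra," and no further normalization is claimed in the statement, so the proof need not resolve it.
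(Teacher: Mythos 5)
Your proposal follows essentially the same route as the paper's proof: write $f=\sum_{i=0}^{n-1}L(y_i)\tau^i$ via Theorem \ref{thm:2.2}, trichotomize on whether $y_0$ is invertible, non-invertible nonzero, or zero, normalize $y_0$ to $1$ in the first case using the isomorphism criterium with $u=y_0^{-1}$ and $\sigma=id$, and distinguish subtypes by the partition $N^0(f'),N^1(f'),N^2(f')$, which are isomorphism invariants by the remark following Lemma \ref{le:2.8}. Your additional remarks on why the invertibility class of $y_0$ is preserved (hence the three cases are mutually exclusive) and why $y_0$ cannot be further normalized over an \'etale algebra are correct and only make explicit what the paper leaves implicit.
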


The types listed in  (1) to (5) in Theorem \ref{le:2.9fields} are contained in the above list  under the types in (1) and (3)
when $N^1(f')=\emptyset$. Two different subtypes of the same type yield non-isomorphic Albert isotopes $K^{(f,g)}$.

 \begin{proof}
  By Theorem \ref{thm:2.2}, we have  $f = \sum_{i=0}^{n-1}L(y_i)\tau^i$
 for some $y=y_0+y_1t+\dots+y_{n-1}t^{n-1}\in A_0$ such that $N_{A_0/F}(y)\not=0$.
 If $y_0\in K^\times$ then applying the isomorphism criterium with $u=y_0^{-1}$, $\sigma=id$ we may assume $y_0=1$. The remaining cases are $y_0=0$ and $y_0\in K\setminus K^\times$.

 We now look at the different cases for the other $y_i$'s. In each case, we distinguish the different possible ``partitions'' of
  $\{ 1,\dots,n-1\}$, allowing the sets $N^i(f')$ to be empty to cover all possible cases. In order to obtain a bijective $f'$, we require  that $N_{A_0/F}(y_0+y_1t+\dots+y_{n-1}t^{n-1})\not=0$ in each case.
 
 \end{proof}

\begin{corollary}  \label{le:2.9cubicfields} 
 Let $K\in {\rm Alg}(V)$ be a cubic Galois field extension.
 Let $(f,g)\in G_0(V) $. Then there exist $(f',g')\in G_0(V) $ such that
 $K^{(f,g)}=K^{(f',g')}$ and $f'$ satisfies exactly one of the following conditions:
 \begin{enumerate}
\item 
$f'=id +L(y_1)\tau + L(y_2)\tau^2$ for some $y_i\in K^\times$ with
  $ N_K(y_1) +  N_K(y_2) - T_{K}(\tau(y_1)\tau^2(y_2))\not=-1$.
 \item 
 $f'=id + L(y_2)\tau^2 $ 
 for some $y_2\in K^\times$ with  $ N_K(y_2)\not=-1$.
  \item 
   $f'=id +L(y_1)\tau $  
  for some $y_1\in K^\times$  with  $ N_K(y_1)\not=-1$.
 \item 
 $f'=id $. 
 \item 
 $f'=L(y_1)\tau + L(y_2)\tau^2 $ 
 for some $y_i\in K^\times$ with $N_K(y_2) \not=-  N_K(y_1) $.
 \item 
  $f'=L(y_2)\tau^2 $ 
  for some  $y_2\in K^\times$ with $N_K(y_2) \not=0$.
 \item 
 $f'=L(y_1)\tau $ 
 for $y_1\in K^\times$  with  $ N_K(y_1)  \not=0$.
 \end{enumerate}
 \end{corollary}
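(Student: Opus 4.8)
The plan is to specialize Theorem \ref{le:2.9fields} (equivalently Proposition \ref{le:2.9C_n}) to the case $n=3$, where $[0,n-1]=\{0,1,2\}$ and, since $K$ is a field, $N^1(f')=\emptyset$ always, so that each $y_i$ is either $0$ or in $K^\times$. First I would invoke Theorem \ref{thm:2.2} to write $f=\sum_{i=0}^{2}L(y_i)\tau^i$ for some $y=y_0+y_1t+y_2t^2\in A_0$ with $N_{A_0/F}(y)\neq 0$, and observe that if $y_0\in K^\times$ we may normalize $y_0=1$ via the critical relations of Lemma \ref{le:2.8} (taking $u=y_0^{-1}$, $\sigma=\mathrm{id}$, $v=1$), while the remaining alternative is $y_0=0$. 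Splitting further on whether each of $y_1,y_2$ vanishes then produces exactly the seven cases: $(y_0=1;y_1,y_2\neq 0)$, $(y_0=1;y_1=0,y_2\neq0)$, $(y_0=1;y_1\neq0,y_2=0)$, $(y_0=1;y_1=y_2=0)$, $(y_0=0;y_1,y_2\neq0)$, $(y_0=0;y_1=0,y_2\neq0)$, $(y_0=0;y_1\neq0,y_2=0)$. Each $f'$ constructed this way stays in $\mathrm{Gl}(V)$ by Lemma \ref{le:iso=iso}.

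The second ingredient is to make the invertibility condition $N_{A_0/F}(y)\neq 0$ explicit as a condition on the norms and traces of the $y_i$. Here I would compute the reduced norm of $x=y_0+y_1t+y_2t^2$ in the cyclic algebra $A_0=(K/F,\tau,1)$ of degree $3$ using the matrix $R(x)$ of right multiplication (the $n=3$ specialization of the circulant-type matrix displayed before Theorem \ref{thm:2.2Galois}, with cocycle $\equiv 1$), obtaining the standard formula
\[
N_{A_0/F}(y)=N_K(y_0)+N_K(y_1)+N_K(y_2)-T_K\bigl(y_0\,\tau(y_1)\,\tau^2(y_2)\bigr).
\]
Substituting $y_0=1$ (so $N_K(y_0)=1$) into this gives, in case (1), $N_K(y_1)+N_K(y_2)-T_K(\tau(y_1)\tau^2(y_2))+1\neq 0$, i.e. the stated inequality; setting also $y_1=0$ yields $N_K(y_2)+1\neq 0$ for case (2), and $y_2=0$ yields $N_K(y_1)+1\neq0$ for case (3); case (4) with $y_1=y_2=0$ gives $N_{A_0/F}(1)=1\neq0$, automatically satisfied. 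For $y_0=0$ the trace term drops out: case (5) gives $N_K(y_1)+N_K(y_2)\neq0$, case (6) gives $N_K(y_2)\neq 0$, case (7) gives $N_K(y_1)\neq 0$.

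Finally I would check that the seven conditions are mutually exclusive and that the displayed normal forms are the ones asserted: exclusivity is immediate from the isomorphism invariants $N^0(f'),N^2(f')$ of Lemma \ref{le:2.8}, since the pattern of which of $y_0,y_1,y_2$ lies in $K^\times$ versus equals $0$ differs across all seven cases (and in cases (1)--(4) we additionally have $y_0=1$ fixed as an idempotent, which by Lemma \ref{le:2.8}(ii) cannot be matched to $y_0=0$). The main obstacle, such as it is, is purely bookkeeping: verifying that after the normalization $y_0=1$ the remaining freedom in $u,v,\sigma$ does not collapse distinct subtypes — but this is exactly what Lemma \ref{le:2.8}(ii)--(iii) guarantees ($y_i$ invertible iff $y_i'$ invertible, zero iff zero, and the idempotent $1$ is preserved), so no genuine difficulty arises; the statement follows by direct specialization of Theorem \ref{le:2.9fields} together with the explicit degree-$3$ reduced norm formula.
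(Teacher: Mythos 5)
Your proposal is correct and follows essentially the same route as the paper: the paper's own (very terse) proof likewise obtains the seven cases by specializing Theorem \ref{le:2.9fields} to $n=3$ (where $N^1(f')=\emptyset$ since $K$ is a field), invokes Lemma \ref{le:2.8} for disjointness, and records the explicit degree-$3$ reduced norm formula $N_{A_0/F}(y)=N_K(y_0)+N_K(y_1)+N_K(y_2)-T_K\bigl(y_0\tau(y_1)\tau^2(y_2)\bigr)$ to translate invertibility into the stated norm conditions. Your write-up merely supplies the bookkeeping the paper leaves implicit.
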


\begin{proof}
 By Lemma \ref{le:2.8}, the cases (1)$\--$(7) are mutually disjoint.
  Moreover, we know that
 $N_{A_0/F}(1+y_1t+y_{2}t^{2}) = N_K(y_0) +    N_K(y_1) +  N_K(y_2) - T_{K}(y_0\tau(y_1)\tau^2(y_2))$
 needs to be non-zero.
\end{proof}

\begin{corollary}  \label{le:2.9cubicetale} 
 Let $K\in {\rm Alg}(V)$ be a $C_3$-Galois algebra with a distinguished $\tau\in {\rm Aut}_F(K)$ of order 3, and write $N_{A_0}(y)=N_{A_0/F}(1+y_1t+y_{2}t^{2})$.
 Let $(f,g)\in G_0(V) $. Then there exist $(f',g')\in G_0(V) $ such that
 $K^{(f,g)}=K^{(f',g')}$ and $f'$ satisfies exactly one of the following conditions: (1) to (7) from Corollary \ref{le:2.9cubicfields},
\begin{enumerate}
 \setcounter{enumi}{7}
  \item 
   $f'=id +L(y_1)\tau + L(y_2)\tau^2$ for some $0\not= y_i\in K\setminus K^\times$ with $N_{A_0}(y)\not=0$.
  \item 
   $f'=id +L(y_1)\tau + L(y_2)\tau^2$ for some $0\not= y_1\in K\setminus K^\times$, $y_2\in K^\times$ with $N_{A_0}(y)\not=0$.
    \item 
     $f'=id +L(y_1)\tau + L(y_2)\tau^2$ for some  $y_1\in K^\times$,  $0\not= y_2\in K\setminus K^\times$ with $N_{A_0}(y)\not=0$.
 \item 
 $f'=L(y_1)\tau + L(y_2)\tau^2 $ 
 for some $0\not= y_i\in K\setminus K^\times$ with $N_{A_0}(y)\not=0$.
  \item 
    $f'=L(y_1)\tau + L(y_2)\tau^2$ for some $0\not= y_1\in K\setminus K^\times$, $y_2\in K^\times$ with $N_{A_0}(y)\not=0$.
    \item 
     $f'=L(y_1)\tau + L(y_2)\tau^2$ for some  $y_1\in K^\times$,  $0\not= y_2\in K\setminus K^\times$ with $N_{A_0}(y)\not=0$.
 \item 
  $f'=L(y_0)+L(y_1)\tau + L(y_2)\tau^2$ for some $0\not= y_0\in K\setminus K^\times$, $y_1,y_2\in K^\times$ with   $N_{A_0}(y)\not=0$.
 \item 
 $f'=L(y_0)+ L(y_2)\tau^2 $
 for some $y_2\in K^\times$ with $N_{A_0}(y_0+y_2t^2)\not=0$.
  \item 
   $f'=L(y_0)+L(y_1)\tau $
  for some $y_1\in K^\times$  with $N_{A_0}(y)\not=0$.
  \item 
    $f'=L(y_0)+ L(y_2)\tau^2 $
 for some $0\not= y_2\in K\setminus K^\times$ with $N_{A_0}(y)\not=0$. 
  \item 
   $f'=L(y_0)+L(y_1)\tau $
  for some $0\not= y_1\in K\setminus K^\times$  with $N_{A_0}(y)\not=0$.
 \item 
  $f'=L(y_0)+L(y_1)\tau + L(y_2)\tau^2$ for some $0\not= y_0\in K\setminus K^\times$, $0\not= y_i\in K\setminus K^\times$ with $N_{A_0}(y)\not=0$.
  \item 
    $f'=L(y_0)+L(y_1)\tau + L(y_2)\tau^2$ for some $0\not= y_0\in K\setminus K^\times$, $0\not= y_1\in K\setminus K^\times$, $y_2\in K^\times$ with $N_{A_0}(y)\not=0$.
    \item 
      $f'=L(y_0)+L(y_1)\tau + L(y_2)\tau^2$ for some $0\not= y_0\in K\setminus K^\times$, $y_1\in K^\times$,  $0\not= y_2\in K\setminus K^\times$ with $N_{A_0}(y)\not=0$.
      \end{enumerate}
 \end{corollary}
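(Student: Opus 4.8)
The plan is to run the same normalisation argument used for Corollary \ref{le:2.9cubicfields}, but now tracking the finer invariants $N^0(f'),N^1(f'),N^2(f')$ that come into play once $K$ is a proper \'etale $C_3$-algebra rather than a field. First I would invoke Theorem \ref{thm:2.2} to write any $f \in {\rm Gl}(V)$ uniquely as $f = L(y_0) + L(y_1)\tau + L(y_2)\tau^2$ with $y = y_0 + y_1 t + y_2 t^2 \in A_0$ and $N_{A_0/F}(y)\neq 0$. The critical relations of Lemma \ref{le:2.8}(i) say that $(f,g)$ and $(f',g')$ may be replaced by $(f',g')$ with $y_i' = \tau^i(u)\tau^i(v)v^{-1}\sigma(y_i)$ for $u,v\in K^\times$, $\sigma\in{\rm Aut}_F(K)$; in particular, if $y_0\in K^\times$ then taking $u=y_0^{-1}$, $v=1$, $\sigma=id$ normalises to $y_0=1$. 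Lemma \ref{le:iso=iso} guarantees the resulting $f'$ is again bijective, so every step of the normalisation stays inside $G_0(V)$.

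Next I would organise the case split by the value of $y_0$: either $y_0=1$ (after normalisation of an invertible constant term), or $y_0=0$, or $y_0\in K\setminus(K^\times\cup\{0\})$ — these are mutually exclusive and, by Lemma \ref{le:2.8}(ii),(iii), stable under the critical relations, so they label genuinely different types. Within each of these three families, $\{1,2\}$ (the nonzero-coefficient positions among the $\tau,\tau^2$ slots) splits according to which of $y_1,y_2$ is zero, invertible, or a proper zero divisor; Lemma \ref{le:2.8}(iii) shows exactly this partition $(N^0,N^1,N^2)$ is an isomorphism invariant, so no two of the enumerated subtypes (8)--(22) can coincide, and together with cases (1)--(7) from Corollary \ref{le:2.9cubicfields} — which are precisely the subcases where $N^1(f')=\emptyset$ — they exhaust all possibilities. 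In every case the bijectivity constraint is recorded as $N_{A_0}(y)\neq 0$, which by the formula in the proof of Corollary \ref{le:2.9cubicfields} is $N_K(y_0)+N_K(y_1)+N_K(y_2) - T_K(y_0\tau(y_1)\tau^2(y_2))\neq 0$; I would only spell this out explicitly in the ``field-like'' cases (1)--(7) where all nonzero $y_i$ are automatically invertible, and otherwise leave it as $N_{A_0}(y)\neq 0$ since proper zero divisors prevent a clean closed form.

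Concretely: with $y_0=1$ the possibilities for $(y_1,y_2)$ beyond those already in (1)--(7) are the ones where at least one of $y_1,y_2$ lies in $K\setminus(K^\times\cup\{0\})$, giving (8) ($y_1,y_2$ both proper zero divisors), (9) ($y_1$ proper, $y_2$ invertible), (10) ($y_1$ invertible, $y_2$ proper); with $y_0=0$ the new cases are (11),(12),(13) mirroring these; and with $y_0\in K\setminus(K^\times\cup\{0\})$ one gets a full further layer (14)--(22) indexed by the three-way status of each of $y_1,y_2$, including the degenerate slots where one of them vanishes (cases (15),(16),(18),(19)). Checking that this list is exhaustive is pure bookkeeping once the invariance of $(N^0,N^1,N^2)$ is in hand.

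The main obstacle — really the only non-mechanical point — is verifying that the claimed subtypes are genuinely distinct, i.e. that the triple $(N^0(f'),N^1(f'),N^2(f'))$ together with the status of $y_0$ cannot be altered by a critical relation. This is exactly Lemma \ref{le:2.8}(ii),(iii): multiplication of $y_i$ by the unit $\tau^i(u)\tau^i(v)v^{-1}$ and application of the automorphism $\sigma$ preserves ``zero'', ``invertible'', and ``proper zero divisor'', and the idempotent obstruction in (ii) pins down the $y_0$-type. So the proof reduces to quoting those two parts of Lemma \ref{le:2.8}, performing the $y_0$-normalisation, and then exhausting the $3\times 3 \times 3$ grid of coefficient-types (minus the redundancies already covered by (1)--(7)); I would present it in two or three lines referring back to the proof of Proposition \ref{le:2.9C_n}, of which this corollary is simply the $n=3$ instance written out in full.
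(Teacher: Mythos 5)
Your proposal is correct and follows essentially the same route as the paper: normalize an invertible $y_0$ to $1$ via the isomorphism criterion, split the remaining coefficients according to the zero/invertible/proper-zero-divisor trichotomy, and invoke Lemma \ref{le:2.8} (in particular part (iii)) to see that this partition is an isomorphism invariant, so the listed subtypes are mutually disjoint and exhaust all possibilities subject to $N_{A_0}(y)\neq 0$. The paper's own proof is just a terser version of the same bookkeeping, deferring to the arguments already given for Corollary \ref{le:2.9cubicfields} and Proposition \ref{le:2.9C_n}.
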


\begin{proof}
Cases (1) to (7) are proved analogously as when $K$ is a field. Additionally, we now also have to distinguish when $0\not= y_i\in K\setminus K^\times$ which leads to the other new cases: We note that $N_{A_0}(y)\not=0$ is equivalent to $y\in A_0$ being invertible, so that the condition that   $N_{A_0/F}(1+y_1t+y_{2}t^{2})\not=0$ excludes certain cases. By Lemma \ref{le:2.8}, the cases
(8)$\--$(20)  are also mutually disjoint.
\end{proof}

 Define $S(K)=\{x\in V\,|\, N(x)=1\}$. By Hilbert's Theorem 90, we know
$$ S(K)=\{ \frac{\tau(v)}{v}\,|\, v\in K^\times\}.$$
Choose a set of representatives $M$ of $K^\times/S(K)$ containing 1.

\begin{lemma}\label{le:M}
Let $K\in {\rm Alg}(V)$ be a cubic Galois extension. 
Let $y_i,y_i'\in M$ and $u,v\in K^\times$, $\sigma\in {\rm Gal}(K/F)$, such that
 $ y_i'=\tau^i(u)\tau^i(v) v^{-1}\sigma(y_i)$ for some $i\in\{1,\dots,n-1\}$ and $N(u)=1$. Then  $y_i=y_i'$.
\end{lemma}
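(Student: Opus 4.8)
The plan is to reduce the statement to the defining property of $M$: I would show that $y_i$ and $y_i'$ represent the same coset in $K^\times/S(K)$, whence, since $M$ was chosen as a set of representatives of these cosets, they must coincide. The only tool needed is the norm map together with its Galois-invariance.

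First I would record the two elementary facts about $N=N_{K/F}$ that drive the argument. Since $K/F$ is a cubic field extension, $K$ is a field, $K^\times$ is a genuine group, and $N\colon K^\times\to F^\times$ is a group homomorphism whose kernel is exactly $S(K)=\{x\in K^\times\mid N(x)=1\}$ (this is just the definition of $S(K)$; Hilbert~90 is only used, elsewhere, to describe $S(K)$ as $\{\tau(v)/v\}$, and is not needed here). Moreover $N$ is invariant under ${\rm Gal}(K/F)$: for every $\rho\in{\rm Gal}(K/F)$ and $x\in K$ one has $N(\rho(x))=N(x)$, because $\rho$ permutes the conjugates of $x$. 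All the elements $u,v,y_i,y_i'$ in the statement lie in $K^\times$ (for $u,v$ by hypothesis, for $y_i,y_i'$ because $M\subseteq K^\times$), so applying $N$ to them is legitimate.

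Then I would simply apply $N$ to the given relation $y_i'=\tau^i(u)\,\tau^i(v)\,v^{-1}\,\sigma(y_i)$. Using multiplicativity of $N$ and the Galois-invariance just noted (with $\rho=\tau^i$ and $\rho=\sigma$),
\begin{align*}
N(y_i') &= N(\tau^i(u))\,N(\tau^i(v))\,N(v)^{-1}\,N(\sigma(y_i))\\
&= N(u)\,N(v)\,N(v)^{-1}\,N(y_i) = N(u)\,N(y_i).
\end{align*}
The hypothesis $N(u)=1$ collapses this to $N(y_i')=N(y_i)$, hence $N(y_i'y_i^{-1})=1$, i.e.\ $y_i'y_i^{-1}\in S(K)$. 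Thus $y_i$ and $y_i'$ lie in the same $S(K)$-coset of $K^\times$, and since $M$ meets each such coset in exactly one point, $y_i=y_i'$.

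I do not expect any genuine obstacle: the lemma is a bookkeeping consequence of the multiplicativity and Galois-invariance of the norm. The only point deserving a word of care is checking that every factor appearing really lies in $K^\times$, so that norms and inverses are defined, which is immediate from $u,v\in K^\times$ and $y_i,y_i'\in M\subseteq K^\times$.
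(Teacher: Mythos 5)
Your proof is correct and follows essentially the same route as the paper: apply the norm to the critical relation, use $N(u)=1$ to deduce $N(y_i')=N(y_i)$, conclude $y_i'y_i^{-1}\in S(K)$, and invoke the fact that $M$ is a set of coset representatives of $K^\times/S(K)$. The paper's proof is just a terser version of the same argument, and your remark that Hilbert~90 is not actually needed here (only the definition of $S(K)$ as the norm-one kernel) is accurate.
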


\begin{proof}
Since $ y_i'=\tau^i(u)\tau^i(v) v^{-1}\sigma(y_i)$ we have $N(y_i')=N(y_i)$, thus $s:=y_i'y_i^{-1}\in S(K)$. The equation
 $ y_i=sy_i'$ means that $y_i,y_i'\in M$ are congruent modulo $S(K)$, thus $y_i=y_i'$.
\end{proof}

We will now use the set $M$ to attempt a classification of our algebras.

\ignore{
Since $N(x)=x\tau(x)\tau^2(x)\cdots \tau^{n-1}(x)$,
 we know $x^{-1}=N(x)^{-1}x\tau(x)\tau^2(x)\cdots  \tau^{n-1}(x)$  for $N(x)\not=0$.
}

\section{A classification of division algebras with  a cubic Galois field extension as the unital heart}\label{sec:4}

Let $A$ be a regular algebra over $F$ of dimension 3 with a unital heart isomorphic to a  cubic Galois field extension $K$ with ${\rm Gal}(K/F)=\langle \tau\rangle$. Then $A$ is a division algebra and
we know that $A\cong K^{(f,g)}$ for some $(f,g)\in G_0(V) $ by Theorem \ref{thm:1.10}. By Corollary \ref{le:2.9cubicfields} we are left with the following cases:
\\ \\ (1) $f=id+L(y_1)\tau + L(y_2)\tau^2$ for some $y_i\in K^\times$ such that
  $ N_K(y_1) +  N_K(y_2) - T_{K}(\tau(y_1)\tau^2(y_2))\not=-1$:
    Then there exists an element $y_1'\in M$, such that $y_1'=y_1 \,{\rm mod}\, S(K)$, i.e. there exists $v\in K^\times$ such that $y_1'=\tau(v)v^{-1}y_1$.
  Set $\sigma=id$, $u=1$ and $g'= g L(v)$ in Lemma \ref{le:2.8}. Then for
 $y_1'=\tau(v) v^{-1}y_1,$
  $y_2'=\tau^2(v) v^{-1}y_2$ we have
  $$K^{(1+L(y_1)\tau + L(y_2)\tau^2,g)}\cong K^{(1+L(y_1')\tau + L(y'_2)\tau^2,g')}.$$
  (2) $f=id + L(y_2)\tau^2 $  
 for some $y_2\in K^\times$ with  $ N_K(y_2)\not=-1$:
   Then there exists an element $y_2'\in M$, such that $y_2'=y_2 \,{\rm mod}\, S(K)$, i.e. there exists $v\in K^\times$ such that $y_2'=\tau^2(v)v^{-1}y_2$.
 Set $\sigma=id$ and $u=1$ in Lemma \ref{le:2.8}. Then choose $g'= g L(v)$ and for
 $
  y_2'=\tau^2(v) v^{-1}y_2$
 we have
  $$K^{(1 + L(y_2)\tau^2,g)}\cong K^{(1+ L(y'_2)\tau^2,g')}.$$
 (3) $f=id +L(y_1)\tau $  
  for some $y_1\in K^\times$  with  $ N_K(y_1)\not=-1$:
  Again there exists an element $y_1'\in M$, such that  $N_K(y_1)=N_K(y_1')$, and there exists $v\in K^\times$ such that $y_1'=\tau(v)v^{-1}y_1$.
Set $\sigma=id$, $u=1$ and $g'= g L(v)$ in Lemma \ref{le:2.8}. Then for
 $y_1'=\tau(v) v^{-1}y_1 $ 
 we have
  $$K^{(1 + L(y_1)\tau,g)}\cong K^{(1+ L(y'_1)\tau,g')}.$$
  (4) $f=id $: 
Then
$g=L(x_0)+L(x_1)\tau + L(x_2)\tau^2$ for $x_i\in K$ with $N_{A_0}(x_0+x_1t+x_2t^2)\not=0$.
We have two cases to consider: $x_0=0$, then choose $v=1$;
 $x_0\in K^\times$, 
then choose $v=x_0^{-1}$. Apply Lemma \ref{le:2.8} for $u=1$, $v$, $\sigma=id$ to see that for
 $g'= g L(v)$ we obtain  $K^{(1,g)}\cong K^{(1,g')}$. Now $g'=L(e)+L(x_1)\tau + L(x_2)\tau^2$ for $e,x_i\in K$ with $N_{A_0}(e+x_1t+x_2t^2)\not=0$ and $e\in \{0,1\}$.
 \\ (4.i) If $e=0$ then $g'=L(x_1)\tau + L(x_2)\tau^2$ for $x_i\in K$ with $N_{A_0}(x_1t+x_2t^2)\not=0$.
  \\ (4.ii)  If $e=1$ then $g'=1+L(x_1)\tau + L(x_2)\tau^2$ for $x_i\in K$ with $N_{A_0}(1+x_1t+x_2t^2)\not=0$.
\\\\ (5) $f=L(y_1)\tau + L(y_2)\tau^2 $ 
 for some $y_i\in K^\times$ with $N_K(y_2) \not=-  N_K(y_1) $:
  There exists an element $y_1'\in M$, such that $N_K(y_1)=N_K(y_1')$ and $v\in K^\times$ such that $y_1'=\tau(v)v^{-1}y_1$.
 Set $\sigma=id$ and $u=1$ in Lemma \ref{le:2.8}. Then for $g'= g L(v)$ and for
  $y_2'=\tau^2(v_1) v_1^{-1}y_2$
 we get
  $$K^{(L(y_1)\tau + L(y_2)\tau^2,g)}\cong K^{(L(y_1')\tau + L(y'_2)\tau^2,g')}$$
  by Lemma \ref{le:2.8}.
 \\\\ (6)  $f=L(y_2)\tau^2 $ 
  for some  $y_2\in K^\times$ with $N_K(y_2) \not=0$:
  Again there exists an element $y_2'\in M$, such that $N_K(y_2)=N_K(y_2')$
  and $v\in K^\times$ such that $y_2'=\tau^2(v)v^{-1}y_2$.
  Set $\sigma=id$ and $u=1$ in Lemma \ref{le:2.8}. Then for $g'= g L(v)$
 we obtain
  $$K^{( L(y_2)\tau^2,g)}\cong K^{( L(y'_2)\tau^2,g')}.$$
  (7)  $f=L(y_1)\tau $ 
 for $y_1\in K^\times$  with  $ N_K(y_1)  \not=0$: Analogously as above, we use Lemma \ref{le:2.8} for $\sigma=id$, $u=1$, and $v$ with
 $y_1'=\tau(v)v^{-1}y_1$. We get
  $ K^{(L(y_1)\tau,g)}$ for $y_1\in M$,  such that $ N_K(y_1) \not=0$ and  $g\in {\rm Gl(V)}$. We obtain:

 \begin{theorem} (Tight Enumeration Theorem for cubic unital hearts)\label{thm:tightenumcubicext} 
 Let $A$ be a division algebra of dimension three with unital heart isomorphic to a  cubic Galois field extension, ${\rm Gal}(K/F)=\langle \tau\rangle$.
 Then $A$ is isomorphic to precisely one of the following types (in the following, $g\in {\rm Gl(V)}$):
 \begin{enumerate}
 \item 
  $K^{(1+L(y_1)\tau + L(y_2)\tau^2,g)}$
 for $y_1\in M$, $y_2\in K^\times$ with
  $ N_K(y_1) +  N_K(y_2) - T_{K}(\tau(y_1)\tau^2(y_2))\not=-1$.
 \item 
 $ K^{(1 + L(y_2)\tau^2,g)}$
 for $y_2\in M$, such that  $ N_K(y_2)\not=-1$.
\item 
$ K^{(1 + L(y_1)\tau,g)}$
 for $y_1\in M$, such that  $ N_K(y_1)\not=-1$.
\item 
$K^{(id,g)}$ with $g=L(x_1)\tau + L(x_2)\tau^2$ for $x_i\in K$ with $N_{A_0}(x_1t+x_2t^2)\not=0$.
\item 
$K^{(id,g)}$ with  $g=1+L(x_1)\tau + L(x_2)\tau^2$ for $x_i\in K$ with $N_{A_0}(1+x_1t+x_2t^2)\not=0$.
 \item 
  $K^{(L(y_1)\tau + L(y_2)\tau^2,g)}$
 for $y_1\in M$, $y_2\in K^\times$, such that $N_K(y_2) \not=-  N_K(y_1) $.
\item 
$ K^{(L(y_2)\tau^2,g)}$ for $y_2\in M$  with $ N_K(y_2) \not=0$.
 \item 
 $ K^{(L(y_1)\tau,g)}$ for $y_1\in M$  with $ N_K(y_1) \not=0$.
 \end{enumerate}
 \end{theorem}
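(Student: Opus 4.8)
The plan is to read the statement off from Corollary~\ref{le:2.9cubicfields} after normalizing the unit coefficients into the set $M$. First I would invoke Theorem~\ref{thm:1.10}: since $A$ is regular with unital heart the cubic Galois field extension $K$, we have $A\cong K^{(f,g)}$ for some $(f,g)\in G_0(V)$. Corollary~\ref{le:2.9cubicfields} then lets me replace $(f,g)$ by $(f',g')$ with $K^{(f,g)}=K^{(f',g')}$ so that $f'$ is one of the seven shapes (1)--(7) listed there, and Lemma~\ref{le:2.8} guarantees those seven shapes are mutually exclusive.

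The core step is the normalization. In the six shapes (1), (2), (3), (5), (6), (7) one designated unit coefficient $y_i\in K^\times$ still has to be pinned down: I would take the representative $y_i'\in M$ congruent to $y_i$ modulo $S(K)$ and write $y_i'=\tau^i(v)v^{-1}y_i$ for a suitable $v\in K^\times$. This is legitimate because $S(K)=\{\tau^j(v)v^{-1}\mid v\in K^\times\}$ by Hilbert's Theorem~90 for $j=1$, and likewise for $j=2$ since $\tau^2$ again generates ${\rm Gal}(K/F)$. Applying Lemma~\ref{le:2.8} with $\sigma={\rm id}$, $u=1$ and $g'':=g'L(v)$ gives $K^{(f',g')}\cong K^{(f'',g'')}$ with the designated coefficient of $f''$ in $M$; Lemma~\ref{le:iso=iso} (together with Corollary~\ref{cor:2.9}) guarantees $f''\in{\rm Gl}(V)$, so the stated norm inequalities persist, while $g''$ stays an arbitrary element of ${\rm Gl}(V)$. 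In shapes (1) and (6) the single $v$ can normalize only $y_1$, which is why $y_2$ remains a general element of $K^\times$ in types (1) and (6). In the remaining shape $f'={\rm id}$ there is no $y_i$ to normalize; instead I would write $g'=L(x_0)+L(x_1)\tau+L(x_2)\tau^2$ and apply Lemma~\ref{le:2.8} with $\sigma={\rm id}$, $u=1$ and $v=1$ when $x_0=0$, $v=x_0^{-1}$ when $x_0\in K^\times$. Since $L(v^{-1}){\rm id}\,L(v)={\rm id}$ this leaves $f'$ untouched and forces the constant coefficient of $g''=g'L(v)$ to be $0$ or $1$, splitting this shape into types (4) and (5). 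Tallying the seven shapes of Corollary~\ref{le:2.9cubicfields} with (4) split in two recovers exactly the eight families.

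What remains is to show the eight families are pairwise disjoint; I expect this bookkeeping, rather than any single idea, to be the main obstacle. If an algebra has normal-form presentations in families $k$ and $k'$, then by Theorem~\ref{thm:1.12} (equivalently Lemma~\ref{le:2.8}) the two presentations are linked by critical relations, and the substitutions $y_i\mapsto\tau^i(u)\tau^i(v)v^{-1}\sigma(y_i)$ and $z_i\mapsto\tau^i(u)\tau^i(v)u^{-1}\sigma(z_i)$---multiplication by a unit followed by an $F$-automorphism---preserve each of $N^0(\cdot)$, $N^1(\cdot)$, $N^2(\cdot)$; hence these are isomorphism invariants of $K^{(f,g)}$, both in $f$ and in $g$. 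Since $K$ is a field, $N^1=\emptyset$, so the invariant content of $f$ is just its support $N^2(f)=[0,2]\setminus N^0(f)$; across types (1)--(8) the supports of $f$ are $\{0,1,2\}$, $\{0,2\}$, $\{0,1\}$, $\{0\}$, $\{0\}$, $\{1,2\}$, $\{2\}$, $\{1\}$, which are pairwise distinct except for (4) and (5). For those two, ${\rm supp}(g)$ omits $0$ in type (4) but contains $0$ with a unit coefficient in type (5), so $0\in N^0(g)$ in the first case and $0\in N^2(g)$ in the second---again invariant and incompatible. Therefore $k=k'$, and $A$ is isomorphic to an algebra of precisely one type. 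The delicate point throughout is keeping the correspondence between Corollary~\ref{le:2.9cubicfields}, the $M$-normalizations, and the invariant table consistent, and in particular separating types (4) and (5) whose $f$-part coincides.
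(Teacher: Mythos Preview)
Your proposal is correct and follows essentially the same route as the paper: invoke Theorem~\ref{thm:1.10} and Corollary~\ref{le:2.9cubicfields}, normalize one designated coefficient into $M$ via Lemma~\ref{le:2.8} with $\sigma=\mathrm{id}$, $u=1$, split the $f'=\mathrm{id}$ case on the constant term of $g$, and then separate the eight types by the invariants $N^j(f)$, $N^j(g)$. The only cosmetic difference is that the paper distinguishes types (4) and (5) by passing to the opposite algebra $(K^{(f,g)})^{op}\cong K^{(g,f)}$ and reading off the $f$-invariants there, whereas you invoke the $N^j(g)$ invariants directly; since $K$ is commutative these two arguments are the same observation in different clothing.
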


 \begin{proof}
 By Lemma \ref{le:2.8}, two algebras $A$ and $B$ that belong to different types (1), (2), (3) and (6), (7), (8)  cannot be isomorphic. The same holds for algebras that lie in either (4) or (5), again these are not isomorphic to any of the types listed in (1), (2), (3) or (6), (7), (8).
  Since $(A^{(f,g)})^{op}\cong (A^{op})^{(g,f)}$ we can apply  Lemma \ref{le:2.8} to the opposite algebras as well which shows us that algebras of different type (4) and (5) also are not isomorphic.
 \end{proof}

  To complete the classification of three-dimensional algebras which have a cubic Galois field extension as unital heart, we now check when two algebras of the same type are isomorphic:

 \begin{theorem} (Classification Theorem for algebras with a cubic Galois field extension as unital heart) \label{thm:classcubic}
 Let $K/F$ be a  cubic Galois field extension, ${\rm Gal}(K/F)=\langle \tau\rangle$. 
 \begin{enumerate}
 \item 
 For $y_1,z_1\in M$, $y_2,z_2\in K^\times$ with
  $ N_K(y_1) +  N_K(y_2) - T_{K}(\tau(y_1)\tau^2(y_2))\not=-1$, $ N_K(z_1) +  N_K(z_2) - T_{K}(\tau(z_1)\tau^2(z_2))\not=-1$  and $g,g'\in {\rm Gl(V)}$, we have
  $$K^{(L(d)+L(y_1)\tau + L(y_2)\tau^2,g)}\cong K^{(L(d')+L(y')\tau + L(y'_2)\tau^2,g')}$$
 if and only if $z_1=y_1$ and exactly on of the following holds:
  \\ (i) $z_2=y_2$ and there exists $a\in F^\times$  such that $ g'=ag,$
  \\ (ii) $ z_2=\tau^2(y_1^{-1}) y_1 \tau(y_2)$  and there exists $ a\in F^\times $ such that $  g'=a \tau g\tau^{-1}L(y_1)^{-1}$,
 \\ (iii) $z_2 = \tau(y_1)\tau^2(y_1^{-1})\tau^2(y_2)$ and there exists $a \in F^\times$ such that
\[
g^\prime  = a\tau^2gL(y_1)\tau^{-2}.
\]
 \item 
 For $y_2,z_2\in M$, such that  $ N_K(y_2)\not=-1$, $ N_K(z_2)\not=-1$ and $g,g'\in {\rm Gl(V)}$, we have
 $$ K^{(1 + L(y_2)\tau^2,g)}\cong K^{(1 + L(z_2)\tau^2,g')}$$
  if and only if $z_2=y_2$ and exactly on of the following holds:
  \\ (i) there exists $ a\in F^\times $  such that  $g'=ag,$
  \\ (ii)  there exists $ a\in F^\times $  such that $v = a\tau(y_2^{-1})$ and $g'= a \tau g\tau^{-1}L(\tau(y_2^{-1}))$
    \\ (iii) there exists $ a\in K^\times $  such that $v=ay_2^{-1}$ and   $g'=a \tau^2 g\tau^{-2}L(y_2)^{-1}$.
   
  \item 
  For $y_1,z_1\in M$, such that  $ N_K(y_1)\not=-1$, $ N_K(z_1)\not=-1$ and $g,g'\in {\rm Gl(V)}$, we have
   $$ K^{(1 + L(y_1)\tau,g)}\cong K^{(1 + L(z_1)\tau,g)}$$
  if and only if $z_1=y_1$ and exactly on of the following holds:
  \\ (i) there exists $ a\in F^\times$ such that $g'=ag,$
  \\ (ii) there is $a\in F^\times$ such that  $v=ay_1^{-1}$ and  $g'=a \tau g\tau^{-1}L(y_1)^{-1}$.
 \\ (iii) there is $a\in F^\times$ such that $v = a\tau^2(y_1)$  and
$g^\prime = a\tau^2gL(y_1)\tau^{-2},$

 \item 
   For $x_i,w_i\in K$ with $N_{A_0}(x_1t+x_2t^2)\not=0$, $N_{A_0}(w_1t+w_2t^2)\not=0$, we have
   $$K^{(id,L(x_1)\tau + L(x_2)\tau^2)}\cong K^{(id,L(w_1)\tau + L(w_2)\tau^2)}$$
   if and only if there exists some $\sigma \in {\rm Gal}(K/F)$ where $w_2 = \tau(w_1)\sigma(\tau(x_1^{-1}))\sigma(x_2)$,

\item 
 For $x_i,w_i\in K$ with $N_{A_0}(1+x_1t+x_2t^2)\not=0$, $N_{A_0}(1+w_1t+w_2t^2)\not=0$ we have
 $$K^{(id,1+L(x_1)\tau + L(x_2)\tau^2)}\cong K^{(id,1+L(w_1)\tau + L(w_2)\tau^2)}$$
 if and only if there exists  $\sigma\in \{id, \tau,\tau^2\}$,  such that $ w_1= \sigma(x_1)$ and $ w_2= \sigma(x_2)$.
 \item 

 For $y_2,z_2 \in M$ such that  $N_K(y_2) \not=- 1 $ and $g,g^\prime \in {\rm Gl}(V)$,
\[
K^{(\tau + L(y_2)\tau^2,g)} \cong K^{(\tau + L(z_2)\tau^2,g^\prime)}
\]
if and only if $y_2 = z_2$ and there are $v \in K^\times$, $\sigma \in {\rm Gal}(K/F)$, such that
$$u = \tau^2(v)v^{-1}, \quad y_2 = \tau(v)v^{-1 }\sigma(y_2), \quad g^\prime = L(u^{-1})\sigma g\sigma^{-1}L(uv).$$
For $\sigma = id,\tau,\tau^2$ we obtain $v = a,ay_2^{-1},a\tau^2(y_2)$, respectively, for some $a \in F^\times$.
\item 

For  $g,g^\prime \in {\rm Gl}(V)$ and $i\in \{1,2\}$, we have $K^{(\tau^i,g)} \cong K^{(\tau^i,g^\prime)}$ if and only if there exist $v \in K^\times$, and $\sigma \in  {\rm Gal}(K/F)$ such that
\[
g' = L( \tau^{3-i}(v^{-1})v)\sigma g\sigma^{-1}L( \tau^{3-i}(v)).
\]
\end{enumerate}
 \end{theorem}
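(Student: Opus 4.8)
The plan is to treat all seven parts uniformly through the isomorphism criterion of Lemma~\ref{le:2.8}, exploiting that the normal forms of Theorem~\ref{thm:tightenumcubicext} leave almost no freedom in the isomorphism data. So suppose $K^{(f,g)}\cong K^{(f',g')}$ with $f,f'$ in one of the listed normal forms; writing $f=\sum_{i=0}^{2}L(y_i)\tau^i$, $f'=\sum_{i=0}^{2}L(y_i')\tau^i$ and likewise $g=\sum_{i}L(x_i)\tau^i$, $g'=\sum_{i}L(x_i')\tau^i$ (in part (1) the constant term of $f,f'$ is the idempotent $1$), Lemma~\ref{le:2.8} produces $u,v\in K^\times$ and $\sigma\in{\rm Gal}(K/F)$ with
\[
y_i'=\tau^i(u)\tau^i(v)v^{-1}\sigma(y_i)\quad(0\le i\le 2),\qquad g'=L(u^{-1})\sigma g\sigma^{-1}L(uv).
\]
The first step is to read off the $i=0$ coordinate. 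If $y_0=y_0'=1$ (parts (1)--(3)) it forces $u=1$; if $f={\rm id}$ (parts (4)--(5)) it again forces $u=1$ and the remaining $f$-relations become vacuous; and if $f=\tau^i$ is a monomial (part (7), and the $y_1=1$, resp.\ $y_2=1$, instances of (6), (8)) the single nonzero $f$-relation $1=\tau^i(u)\tau^i(v)v^{-1}$ forces $u=\tau^{3-i}(v)v^{-1}$. In every case $N_{K/F}(u)=1$.

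The second step, for parts (1)--(3) and (6)--(8), is to pin down the coset representatives. Applying $N_{K/F}$ to $y_i'=\tau^i(u)\tau^i(v)v^{-1}\sigma(y_i)$ and using that $N_{K/F}$ is $\tau$- and $\sigma$-invariant with $N_{K/F}(u)=N_{K/F}(\tau^i(v)v^{-1})=1$ gives $N_{K/F}(y_i')=N_{K/F}(y_i)$; since $y_i,y_i'\in M$, Lemma~\ref{le:M} forces $y_i'=y_i$, which is the ``$z_1=y_1$'' and ``$y_2=z_2$'' clause. After that substitution the surviving $f$-relation in the $u=1$ cases reads $\tau^i(v)v^{-1}=y_i\sigma(y_i)^{-1}$, whose right-hand side has norm $1$; by Hilbert's Theorem~90 (applied to $\tau$, rewriting $\tau^2$ as $\tau^{-1}$) it is solvable with solution set a single coset modulo $F^\times$. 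For $\sigma={\rm id},\tau,\tau^2$ one verifies by hand an explicit particular solution $v_0$ built from $y_i,\tau(y_i),\tau^2(y_i)$ (exactly which, and with which sign of exponent, depends on which $\tau$-slot $y_i$ occupies and on $\sigma$), so $v=av_0$ with $a\in F^\times$. Feeding this back into the remaining coordinate relation — in part (1), applying $\tau$ once to get $\tau^2(v)v^{-1}$ in closed form and substituting into $y_2'=\tau^2(v)v^{-1}\sigma(y_2)$ — gives the displayed values of $z_2$, and feeding it into $g'=\sigma g\sigma^{-1}L(v)=a\,\sigma g\sigma^{-1}L(v_0)$ while commuting $L(v_0)$ past $\sigma=\tau^k$ via $\tau^kL(x)=L(\tau^k(x))\tau^k$ gives the displayed values of $g'$. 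Parts (2), (3), (6) are this same computation with one $y_j$ absent; part (7) is the cleanest instance, since there $u=\tau^{3-i}(v)v^{-1}$ at once gives $u^{-1}=\tau^{3-i}(v^{-1})v$ and $uv=\tau^{3-i}(v)$, so $g'=L(\tau^{3-i}(v^{-1})v)\sigma g\sigma^{-1}L(\tau^{3-i}(v))$ with no residual constraint and every $\sigma\in{\rm Gal}(K/F)$ admissible.

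For parts (4) and (5), where $u=1$ and the $f$-relations are empty, the criterion collapses to $g'=\sigma g\sigma^{-1}L(v)$, i.e.\ $x_j'=\sigma(x_j)\tau^j(v)$ for $0\le j\le 2$. In part (5) the normalization $x_0=x_0'=1$ forces $v=1$, whence $w_1=\sigma(x_1)$, $w_2=\sigma(x_2)$. In part (4), $x_0=0$ leaves $v$ free; assuming $x_1\in K^\times$, solving $w_1=\sigma(x_1)\tau(v)$ for $v$, computing $\tau^2(v)$ (using $\tau\sigma=\sigma\tau$ and $\tau^3={\rm id}$) and substituting into $w_2=\sigma(x_2)\tau^2(v)$ yields $w_2=\tau(w_1)\sigma(\tau(x_1^{-1}))\sigma(x_2)$. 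In every part the converse follows by reversing the computation and invoking the ``$(ii)\Rightarrow(i)$'' half of Lemma~\ref{le:2.8}; the ``exactly one of (i), (ii), (iii)'' clauses follow from the uniqueness in Theorem~\ref{thm:1.12}, since an isomorphism $\varphi$ determines $\sigma$ and $uv$ and so realizes exactly one of the three automorphisms; and the $N_{K/F}(\cdot)\ne-1$ hypotheses are merely the invertibility conditions $N_{A_0/F}(y)\ne0$ on $f$ rewritten via Corollary~\ref{cor:2.9}.

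I expect the main obstacle to be purely computational bookkeeping in the third step: solving each Hilbert~90 equation $\tau^i(v)v^{-1}=y\,\sigma(y)^{-1}$ with a genuinely explicit particular solution and then commuting $L(v)$ correctly through the relevant powers of $\tau$ so as to land exactly on the stated closed forms of $g'$ (and $z_2$) for all three automorphisms at once. A secondary subtlety is part (4): the displayed relation tacitly assumes $x_1\in K^\times$, and the residual sub-case $x_1=0$ — which forces $x_2\in K^\times$ and $w_1=0$ — is a genuinely distinct sub-type by Lemma~\ref{le:2.8}(iii) and must be recorded separately.
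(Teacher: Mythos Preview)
Your proposal is correct and follows essentially the same route as the paper: apply Lemma~\ref{le:2.8} to obtain the critical relations, read off $u$ from the $i=0$ slot (forcing $u=1$ in parts (1)--(5)) or from the single normalized monomial slot (forcing $u=\tau^{3-i}(v)v^{-1}$ in parts (6)--(7)), invoke Lemma~\ref{le:M} to get equality of the $M$-representatives, and then solve the surviving relation for $v$ separately for each $\sigma\in\{\mathrm{id},\tau,\tau^2\}$. The only stylistic difference is that the paper executes your Hilbert~90 step by hand via the identity $\tau^2(x)x^{-1}=\tau\bigl(\tau(x)x\bigr)\bigl(\tau(x)x\bigr)^{-1}$ rather than invoking the theorem abstractly, and your flag about the tacit assumption $x_1\in K^\times$ in part~(4) is indeed a gap shared by the paper's own argument.
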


 \begin{proof}
By Lemma \ref{le:2.8}, two algebras mentioned in any of the cases of Theorem \ref{le:2.9cubicfields} are isomorphic if and only if there are $u,v\in K^\times$ and $\sigma\in {\rm Gal}(K/F)$, such that the criteria of Lemma \ref{le:2.8} are satisfied. We call them \emph{critical relations} following \cite{P}. Here, $\sigma\in \{id,\tau,\tau^2\}$. By Lemma \ref{le:M}, if $z_i,y_i\in M$ and
$z_i=\tau^i(v)\tau^i(v)  v^{-1}\sigma(y_i)$ then $z_i=y_i$. We go through the cases of Theorem \ref{le:2.9cubicfields}:
 \\\\ (1)
  Since we have $u=1$ here, the critical relations are  $z_1=\tau(v) v^{-1}\sigma(y_1)$, $z_2=\tau^2(v) v^{-1}\sigma(y_2)$ and $g'=\sigma g\sigma^{-1}L(v)$, which implies
   $z_1=y_1$ since $z_1,y_1\in M$.

    Assume $\sigma=id$. Using $z_1=y_1$ and  $y_1=\tau(v) v^{-1}\sigma(y_1)$ this yields $\tau(v) v^{-1}=1$, that means
$\tau(v)=v$, so $v=a\in F^\times$. Therefore the critical relations here become 
$z_1=y_1$,  $z_2=y_2$ and there exists $a\in F^\times$  such that $g'=ag$.

 Assume $\sigma=\tau$, i.e. $y_1=\tau(v) v^{-1}\tau(y_1)$ which is the same as $\tau(vy_1)=vy_1$, i.e.
 $v y_1\in F^\times$.
 Now the critical relations become: there is $a\in F^\times$,   $v=ay_1^{-1}$ and 
  $z_2=\tau^2(y_1^{-1}) y_1 \tau(y_2)$,  $g'=a \tau g\tau^{-1}L(y_1)^{-1}$.

  Assume $\sigma=\tau^2$, i.e. the critical relations are
   $y_1 = \tau(v)v^{-1}\tau^2(y_1)$  and $g'= \tau^2 g\tau^{-2} L(v)$.  In order to deal with this version, we will use the formula
\begin{align}
\label{SQUINV} \tau^2(x)x^{-1} = \tau\big(\tau(x)x\big)\big(\tau(x)x\big)^{-1} &&(x \in K^\times)
\end{align}
and obtain the following chain of equivalent conditions:
\begin{align*}
y_1 = \tau(v)v^{-1}\tau^2(y_1) \Longleftrightarrow\,\,&\tau(v)v^{-1} = \tau\big(\tau(y_1^{-1})y_1^{-1}\big)\big(\tau(y_1^{-1})y_1^{-1}\big)^{-1} \\
\Longleftrightarrow\,\,&\exists a \in F^\times\,:\;v = a\tau(y_1^{-1})y_1^{-1} = aN_K(y_1)^{-1}\tau^2(y_1) \\
\Longleftrightarrow\,\,&\exists a \in F^\times\,:\; v = a\tau^2(y_1).
\end{align*}
Hence there is $a\in F^\times$, such that $v = a\tau^2(y_1)$ and
$z_2 = \tau(y_1)\tau^2(y_1^{-1})\tau^2(y_2)$ and
$g^\prime = a\tau^2g\tau^{-2}L\big(\tau^2(y_1)\big) = a\tau^2gL(y_1)\tau^{-2}.$
 \\\\ (2)
 Since  $u=1$, here the critical relations are $z_2=\tau^2(v) v^{-1}\sigma(y_2)$ and $g'=\sigma g\sigma^{-1}L(v)$,  which implies
  $z_2=y_2$ by the definition of $M$.

 Assume $\sigma=id$. Using $z_2=y_2$, the equation
 $z_2=\tau^2(v) v^{-1}\sigma(y_2)$  yields $\tau^2(v) v^{-1}=1$, that means $\tau^2(v)=v$, so $v=a\in F^\times$.
 Therefore the critical relations are $z_2=y_2$ and
  there exists $a\in F^\times$ such that $v=a$ and  $g'=a g$.

   Assume $\sigma=\tau$. Then the critical relations imply $y_2=\tau^2(v) v^{-1}\tau(y_2)$ and $g'=\tau g\tau^{-1}L(v)$.
    and \eqref{SQUINV} can be applied. We obtain the following chain of equivalent conditions:
\begin{align*}
y_2 = \tau^2(v)v^{-1}\tau^2(y_2) \Longleftrightarrow\,\,&\tau(y_2^{-1})y_2 = \tau\big(\tau(v)v\big)\big(\tau(v)v\big)^{-1} \\
\Longleftrightarrow\,\,&\exists a \in F^\times\,:\;y_2 = a\tau(v)v = aN_K(v^{-1})^{-1}\tau^2(v^{-1}) \\
\Longleftrightarrow\,\,&\exists a \in F^\times\,:\; y_2 = a\tau^2(v^{-1})\\
\Longleftrightarrow\,\,&\exists a \in F^\times\,:\; v = a\tau(y_2^{-1}).
\end{align*}
Thus $g'=\tau g\tau^{-1}L(a\tau(y_2^{-1}))= a \tau g\tau^{-1}L(\tau(y_2^{-1}))$.

Assume $\sigma=\tau^2$.   Then the critical relations are $y_2=\tau^2(v) v^{-1}\tau^2(y_2)$ and $g'=\tau^2 g\tau^{-2}L(v)$.
    The first equation is the same as $\tau^2(vy_2)=vy_2$, i.e. $v y_2\in F^\times$, denote it by  $a=v y_2$.
 Hence there is $a\in F^\times$,  such that $v=ay_2^{-1}$ and   $g'=a \tau^2 g\tau^{-2}L(y_2)^{-1}$.
 \\\\ (3) Here the critical relations imply that 
  $z_1=\tau(v) v^{-1}\sigma(y_1)$ and $g'=\sigma g\sigma^{-1}L(v)$, hence $z_1=y_1$ by the definition of $M$.

  Assume $\sigma=id$. This yields $\tau(v) v^{-1}=1$, that means $\tau(v)=v$, so $v=a\in F^\times$.
 Therefore the critical relations are $z_1=y_1$ and there exists $a\in F^\times$ such that $g'=ag$.

 Assume $\sigma=\tau$, i.e. $y_1=\tau(v) v^{-1}\tau(y_1)$.
 Then there is $a\in F^\times$,   $v=ay_1^{-1}$ and  $g'=a \tau g\tau^{-1}L(y_1)^{-1}$.

  Assume $\sigma=\tau^2$,   i.e.  $y_1=\tau(v) v^{-1}\tau^2(y_1)$. Then analogously as in (1) (iii), we obtain that there is $a\in F^\times$, such that $v = a\tau^2(y_1)$  and
$g^\prime = a\tau^2g\tau^{-2}L\big(\tau^2(y_1)\big) = a\tau^2gL(y_1)\tau^{-2}.$
  \\\\ (4) $K^{(1,g)}$ with $g=L(x_1)\tau + L(x_2)\tau^2$ for $x_i\in K$ with $N_{A_0}(x_1t+x_2t^2)\not=0$.
Here, the critical relations reduce to $u=1$ and
$$L(w_1)\tau + L(w_2)\tau^2=\sigma ( L(x_1)\tau + L(x_2)\tau^2  )\sigma^{-1}L(v)$$
 $$ =  ( L(\sigma(x_1))\tau L(v) +  L(\sigma(x_2))\tau^2   L(v)$$
$$ = L( \sigma(x_1) \tau(v)) \tau + L(\sigma(x_2)\tau^2 (v) )\tau^2.$$
By Corollary \ref{cor:2.9}, this is equivalent to
$w_1=\sigma(x_1) \tau(v)$ and $ w_2= \sigma(x_2)\tau^2 (v).$
Replacing $v$ by $\tau^2(v)$ if necessary, this condition that is necessary and sufficient for the two algebras to be isomorphic may also be expressed as follows: there exist $v \in K^\times$, $\sigma \in {\rm Gal}(K/F)$ having $w_i = \tau^{i-1}(v)\sigma(x_i)$ for $i = 1,2$. This implies $v = w_1\sigma(x_1^{-1})$ and $w_2 = \tau(w_1)\sigma(\tau(x_1^{-1}))\sigma(x_2)$. Hence the two algebras are isomorphic if and only if some $\sigma \in {\rm Gal}(K/F)$ has $w_2 = \tau(w_1)\sigma(\tau(x_1^{-1}))\sigma(x_2)$.
\\\\ (5) 
Here, the critical relations reduce to $u=1$ and
$$1+L(w_1)\tau + L(w_2)\tau^2 =  \sigma ( 1+L(x_1)\tau + L(x_2)\tau^2  )\sigma^{-1}L(v)$$
$$ = L(v)+  \sigma L(x_1)\tau \sigma^{-1}L(v) + \sigma L(x_2)\tau^2  \sigma^{-1}L(v)$$
$$ = L(v)+  L( \sigma(x_1) \tau(v)) \tau + L(\sigma(x_2)\tau^2 (v) )\tau^2.$$
 By Corollary \ref{cor:2.9}, this is equivalent to
$$ L(v)=1, w_1= \sigma(x_1) \tau(v), w_2= \sigma(x_2)\tau^2 (v).$$
This is equivalent to $v=1$, $ w_1= \sigma(x_1)$, and $w_2= \sigma(x_2).$
 \\\\ (6)
Let $y_i,z_i \in K^\times$ for $i = 1,2$ such that $N_K(y_1) \ne -N_K(y_2)$, $N_K(z_1) \ne -N_K(z_2)$, and let $g,g^\prime \in {\rm Gl}(V)$. By Theorem \ref{thm:1.12},
\[
K^{(L(y_1)\tau + L(y_2)\tau^2,g)} \cong K^{(L(z_1)\tau + L(z_2)\tau^2,g^\prime)}
\]
if and only if there are $u,v \in K^\times$, $\sigma \in {\rm Gal}(K/F)$ satisfying
\begin{align}
\label{ZETTAU} z_i = \tau^i(u)\tau^i(v) v^{-1}\sigma(y_i), \quad g^\prime = L(u^{-1})\sigma g\sigma^{-1}L(uv) &&(i = 1,2).
\end{align}
For $u = \tau^2(y_1^{-1})$, $v = 1_K$, $\sigma = 1_V$, this implies $z_1 = 1_K$. Thus the algebras in (6), up to isomorphism, have the form $K^{(\tau + L(y_2)\tau^2,g)}$, $y_2 \in K^\times$, $g \in {\rm Gl}(V)$. From now on, we may therefore assume $y_1 = z_1 = 1_K$: Then Equation \eqref{ZETTAU} amounts to
\begin{align}
\label{UTAUVE} u = \tau^2(v)v^{-1}, \quad z_2 = \tau(v)v^{-1}\sigma(y_2), \quad g^\prime = L(u^{-1})\sigma g \sigma^{-1}L(uv).
\end{align}
The second equation of \eqref{UTAUVE} for $\sigma = 1_V$ and an appropriate choice  of $v$ implies $z_2 \in M$. Thus the algebras in case (6) up to isomorphism have the form $K^{(\tau + L(y_2)\tau^2,g)}$, for some $y_2 \in M$, $g \in {\rm Gl}(V)$. In the preceding discussion, we may therefore assume $y_2,z_2 \in M$, whence Equation \eqref{UTAUVE} implies $y_2 = z_2$ and thus becomes equivalent to
\begin{align}
\label{UTAUEQ} u = \tau^2(v)v^{-1}, \quad y_2 = \tau(v)v^{-1 }\sigma(y_2), \quad g^\prime = L(u^{-1})\sigma g\sigma^{-1}L(uv).
\end{align}
Summing up, therefore,
the algebras in case (6) are isomorphic to $K^{(\tau + L(y_2)\tau^2,g)}$ for some $y_2 \in M$, $g \in {\rm Gl}(V)$, and given $y_2,z_2 \in M$, $g,g^\prime \in {\rm Gl}(V)$,
\[
K^{(\tau + L(y_2)\tau^2,g)} \cong K^{(\tau + L(z_2)\tau^2,g^\prime)}
\]
if and only if $y_2 = z_2$ and Equation \eqref{UTAUEQ} holds for some $v \in K^\times$, $\sigma \in {\rm Gal}(K/F)$.

For $\sigma = id,\tau,\tau^2$, respectively, the usual arguments employed already in the proof of case (1) yield $v = a,ay_2^{-1},a\tau^2(y_2)$, respectively, for some $a \in F^\times$. Details are left to the reader.
 \\ \\
(7) This case deals with algebras of the form
$ K^{(L(y_i)\tau^i,g)}$, with $y_i \in K^\times$, $ g \in {\rm Gl}(V)$, $ i \in \{1,2\}$.
By Theorem \ref{thm:1.12},  we may assume $y_i = 1_K$ and  we know that, given $g,g^\prime \in {\rm Gl}(V)$, we have $K^{(\tau^i,g)} \cong K^{(\tau^i,g^\prime)}$ if and only if there exist $v \in K^\times$, $\sigma \in  {\rm Gal}(K/F)$ satisfying
\[
g^\prime = L(u^{-1})\sigma g\sigma^{-1}L(uv), \quad u = \tau^{3-i}(v)v^{-1}.
\]
We obtain $g^\prime = L( \tau^{3-i}(v^{-1})v)\sigma g\sigma^{-1}L( \tau^{3-i}(v))$.

\end{proof}

We observe that for $g=L(x_0)+L(x_1)\tau + L(x_2)\tau^2$, $g'=L(w_0)+L(w_1)\tau + L(w_2)\tau^2$, in cases (1), (2), (3),
$$g'=a\sigma g \sigma^{-1}=a (L(\sigma(x_0))  + L(\sigma(x_1))\tau  + L(\sigma(x_2)) \tau^2 )$$
 if and only if
$$w_i=a\sigma(x_i) \text{ for } i\in \{0,1,2\}$$
and that
$$g'=L(u^{-1})\sigma g \sigma^{-1}=a (L(\sigma(x_0))  + L(\sigma(x_1))\tau  + L(\sigma(x_2)) \tau^2 )L(uv)$$
 if and only if
$$w_i=u^{-1}\sigma(x_i)\tau^i(u)\tau^i(v).$$

\section{Towards a classifications of algebras whose unital heart is a cyclic Galois field extension}\label{sec5}

Let $A$ be a division algebra over $F$ of dimension $n$ with a unital heart isomorphic to a cyclic Galois field extension $K$, ${\rm Gal}(K/F)=\langle \tau\rangle$.  The methology to "sort'' the algebras with unital hear $K$ is the same as demonstrated in the cubic field case:
by Theorem \ref{thm:1.10},  $A\cong K^{(f,g)}$ for some $(f,g)\in G_0(V) $, and by Theorem \ref{le:2.9fields} we are left with a list of cases, some of which we can refine. Write
$f'=L(e)+L(y_1)\tau + L(y_2)\tau^2 + \dots + L(y_{n-1})\tau^{n-1}$, $e\in \{1,0\}$.
When $e=1$ and there exists $y_i\in K^\times$ in the expression of $f'$ such that $\tau^{i}$ generates $G$, choose the smallest such $y_i$ and scale it to an element in $M$.
\\ \\ (1) $f'=id +L(y_1)\tau + L(y_2)\tau^2 + \dots + L(y_{n-1})\tau^{n-1}$, $N^0(f')$ is empty and
 $N_{A_0/F}(y)=N_{A_0/F}(1+y_1t+\dots+y_{n-1}t^{n-1})\not=0$:
  Then there exists $y_1'\in M$, such that $N(y_1)=N(y_1')$ and $v\in K^\times$ with $y_1'=\tau(v)v^{-1}y_1$. Set $\sigma=id$ and $u=1$ in Lemma \ref{le:2.8}. Then put $g'= g L(v)$ and for $y_{i}'=\tau^{i}(u)\tau^{i}(v) v^{-1}y_{i}$
 we have
 $$K^{(1+L(y_1)\tau + L(y_2)\tau^2 + \dots + L(y_{n-1})\tau^{n-1},g)}\cong K^{(1+L(y')\tau + L(y'_2)\tau^2 + \dots + L(y'_{n-1})\tau^{n-1},g')}$$
  (2) $f'=id +L(y_1)\tau + L(y_2)\tau^2 + \dots + L(y_{n-1})\tau^{n-1}$, $N^0(f')$ is not empty, and $N_{A_0/F}(1+y_1t+\dots+y_{n-1}t^{n-1})\not=0$:
 \\ Case 1: there exists  $i_k\in N^2(f')$
 such that $\tau^{i_k}$ generates ${\rm Gal}(K/F)$.
 Choose the smallest such $i_k$, that means $y_{i_k}$ is nonzero. 
    By Hilbert's Theorem 90, $S(K)=\{\tau^{i_k}(v)v^{-1},|\, v\in K^\times\}$.  Thus there exists $y_{i_k}'\in M$ and $v\in K^\times$, such that $y_{i_k}'=\tau^{i_k}(v)v^{-1}y_{i_k}$. Set $\sigma=id$ and $u=1$ in Lemma \ref{le:2.8}. For $g'= g L(v)$ and for
  $$y_1'=\tau^2(v) v^{-1}y_1, \dots, y_{i_k}'=\tau(v) v^{-1}y_{i_k}, \dots, y_{n-1}'=\tau^{n-1}(v) v^{-1}y_{n-1}$$
 we have
 $$K^{(1+L(y_1)\tau + L(y_2)\tau^2 + \dots + L(y_{n-1})\tau^{n-1},g)}\cong K^{(1+L(y')\tau + L(y'_2)\tau^2 + \dots + L(y'_{n-1})\tau^{n-1},g')}$$
 This implies that
 $$K\cong K^{(1+L(y_1)\tau + L(y_2)\tau^2 + \dots + L(y_{n-1})\tau^{n-1},g)}$$
 for $y_{i_k}\in M$, and for $y_i\in K$, such that for some nonempty set $\{i_1,\dots,i_s \}\subset  \{ 1,\dots,n-1\}$ of less than $n-1$ elements, the $y_{i_1},\dots,y_{i_s}$ equal 0,
  $ N_{A_0}(1+y_1t+\dots+y_{n-1}t^{n-1})\not=0$, and  $g\in {\rm Gl(V)}$.
 \\ Case 2:  For all $i_k\in N^2(f')$, $\tau^{i_k}$ does not generate ${\rm Gal}(K/F)$.
 Then we cannot ``scale'' any entry $y_i$ so that it lies in $M$. This case can only happen when $n$ is not prime.
  \\\\ (3) $f'=id $: 
$g=L(x_0)+L(x_1)\tau  + \dots + L(x_{n-1})\tau^{n-1}$ for $x_i\in K$ with $N_{A_0}(x_0+x_1t+\dots +x_{n-1}t^{n-1})\not=0$.
We have two cases to consider: $x_0=0$, then choose $v=1$, 
 and the case $x_0\in K^\times$, 
then choose $v=x_0^{-1}$. Apply Lemma \ref{le:2.8} for $u=1$, $v$, $\sigma=id$ to see that for
 $g'= g L(v)$ we obtain  $K^{(1,g)}\cong K^{(1,g')}$. Now $g'=(L(e)+L(x_1)\tau +  \dots + L(x_{n-1})\tau^{n-1}$ for $e,x_i\in K$ with $N_{A_0}(e+x_1t+\dots+x_{n-1}t^{n-1})\not=0$ and $e\in \{0,1\}$.
 \\
If $e=0$ then $g'=L(x_1)\tau + \dots+x_{n-1}t^{n-1}$ for $x_i\in K$ with $N_{A_0}(x_1t+\dots+x_{n-1}t^{n-1})\not=0$.
\\
If $e=1$ then $g'=1+L(x_1)\tau + \dots+x_{n-1}t^{n-1}$ for $x_i\in K$ with $N_{A_0}(1+x_1t+\dots+x_{n-1}t^{n-1})\not=0$.
\\\\ (4) $f'=L(y_1)\tau + L(y_2)\tau^2 + \dots + L(y_{n-1})\tau^{n-1}$,  $N^0(f')$ is empty and $N_{A_0/F}(y_1t+\dots+y_{n-1}t^{n-1})\not=0$.
 \\\\ (5) $f'=L(y_1)\tau + L(y_2)\tau^2 + \dots + L(y_{n-1})\tau^{n-1}$,  $N^0(f')$ is not empty, and   $N_{A_0/F}(y_1t+\dots+y_{n-1}t^{n-1})\not=0$: These cases we cannot refine any more.

By Proposition \ref{le:2.8}, two algebras $A$ and $B$ that belong to different types (1), (2) and different subsets $N^0(f')$,  and (5) with different subsets $N^0(f')$, cannot be isomorphic. The same holds for algebras that lie in either (3) or (4), again these are not isomorphic to any of the types listed in (1), (2, $N^0(f')$)  and (5, $N^0(f')$).
Since $(A^{(f,g)})^{op}\cong (A^{op})^{(g,f)}$ we can apply Proposition \ref{le:2.8} to the opposite algebras as well which shows us that algebras of type (3) and (4) also are not isomorphic. We proved:

\begin{theorem} (Enumeration Theorem for  unital hearts that are a cyclic field extension)\label{thm:tightenumcubicext} 
Let $A$ be a division algebra of dimension $n$ with unital heart isomorphic to a  cyclic Galois field extension $K/F$ with ${\rm Gal}(K/F)=\langle \tau\rangle$. Then $A$ is isomorphic to precisely one of the following:
\begin{enumerate}
 \item 
 $K^{(1+L(y_1)\tau + L(y_2)\tau^2 + \dots + L(y_{n-1})\tau^{n-1},g)}$ such that $N^0(f')$ is empty,
  $y_1\in M$, $N_{A_0}(1+y_1t+\dots+y_{n-1}t^{n-1})\not=0$ and  $g\in {\rm Gl}(V)$.
 \item 
 $K^{(1+L(y_1)\tau + L(y_2)\tau^2 + \dots + L(y_{n-1})\tau^{n-1},g)}$ such that  $N^0(f')$ is not empty,
   $N_{A_0/F}(1+y_1t+\dots+y_{n-1}t^{n-1})\not=0$, and $g\in {\rm Gl(V)}$.
    \\ If  $\tau^{i_k}$ does not generate ${\rm Gal}(K/F)$ for all $i_k\in N^2(f')$, then we cannot ``scale'' any nonzero entry  $y_i$ so that it lies in $M$.
  \\ If there exists  $i_k\in N^2(f')$
 such that $\tau^{i_k}$ generates ${\rm Gal}(K/F)$, then
  take the smallest such  $y_{i_k}\in M$, and get
  $K\cong K^{(1+L(y_1)\tau + L(y_2)\tau^2 + \dots + L(y_{n-1})\tau^{n-1},g)}$
 for $y_{i_k}\in M$.
 \\ Each set $N^0(f')$ yields a different type.
  \item 
  $K^{(1,g)}$ for $g=L(x_1)\tau +  \dots + L(x_{n-1})\tau^{n-1}$ for $x_i\in K$ with $N_{A_0}(x_1t+\dots+x_{n-1}t^{n-1})\not=0$.
 \item 
  $K^{(1,g)}$ for  $g=1+L(x_1)\tau  + \dots + L(x_{n-1})\tau^{n-1}$ for $x_i\in K$ with $N_{A_0}(1+x_1t+\dots +x_{n-1}t^{n-1})\not=0$.
 \item 
 $K^{(L(y_1)\tau + L(y_2)\tau^2 + \dots + L(y_{n-1})\tau^{n-1},g)}$  and  $N^0(f')$ is not empty, $N_{A_0/F}(y_1t+\dots+y_{n-1}t^{n-1})\not=0$, and $g\in {\rm Gl(V)}$.
 \\ Each set $N^0(f')$ yields a different type.
 \end{enumerate}

   \end{theorem}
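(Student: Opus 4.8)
The plan is to run the same three-step scheme already used for $n=3$ in Section \ref{sec:4}, now in full generality. \textbf{Step 1 (reduction to a normal form).} Since $A$ is a division algebra it is regular, so by Kaplansky's trick (Proposition \ref{prop:1.5}) and the Unital Heart Theorem \ref{thm:1.10} we may write $A \cong K^{(f,g)}$ for some $(f,g) \in G_0(V)$. By Theorem \ref{thm:2.2} expand $f = \sum_{i=0}^{n-1}L(y_i)\tau^i$ with $N_{A_0/F}(y) \ne 0$; if $y_0 \in K^\times$ apply Lemma \ref{le:2.8} with $\sigma = id$, $u = y_0^{-1}$ to normalize $y_0 = 1$, otherwise $y_0 = 0$. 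Feeding this into Theorem \ref{le:2.9fields} replaces $(f,g)$ by an isotopic pair whose first component $f'$ is one of the five listed normal forms, the distinction being made by whether $e := y_0 \in \{0,1\}$ and by the partition of $[0,n-1]$ into $N^0(f')$ and $N^2(f')$ (note $N^1(f') = \emptyset$ because $K$ is a field).

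\textbf{Step 2 (refining the leading coefficient).} In the families with a unit among the $y_i$ and $e = 1$, I would look for the smallest index $i$ with $y_i \in K^\times$ such that $\tau^i$ generates ${\rm Gal}(K/F)$. For that $i = i_k$, Hilbert's Theorem 90 gives $S(K) = \{\tau^{i_k}(v)v^{-1} \mid v \in K^\times\}$, so there is $v \in K^\times$ with $\tau^{i_k}(v)v^{-1}y_{i_k} \in M$. Applying Lemma \ref{le:2.8} with $\sigma = id$, $u = 1$, $g' = gL(v)$, and $y_i' = \tau^i(v)v^{-1}y_i$ for the remaining $i$ (legitimate by Lemma \ref{le:iso=iso}), we land in an isomorphic algebra with $y_{i_k} \in M$; this always applies to $i = 1$ when $N^0(f') = \emptyset$, giving part (1). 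The same move handles the $e = 0$ families. When $f' = id$, I would split on whether the constant coefficient of $g$ is a unit, scaling it to $0$ or $1$ to obtain parts (3) and (4).

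\textbf{Step 3 (distinctness and exhaustiveness).} Lemma \ref{le:2.8} supplies the isomorphism invariants: the sets $N^0(f), N^1(f), N^2(f)$ of $K^{(f,g)}$, and, by part (ii), the presence or absence of the idempotent constant term $y_0 = 1$. This separates $\{(1),(2)\}$ from $(5)$, shows $(1)$ and $(2)$ are disjoint, and shows that within $(2)$ (and within $(5)$) different sets $N^0(f')$ give non-isomorphic algebras. To separate $(3)$ from $(4)$, where $f' = id$ in both, I would pass to opposite algebras: since $(A^{(f,g)})^{op} \cong (A^{op})^{(g,f)}$ and $K^{op} = K$, the $N^j$-invariants and the constant term of $g$ become invariants of $A$, and the constant term of $g$ (zero in $(3)$, one in $(4)$) distinguishes them. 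Likewise $(3)$ and $(4)$ are disjoint from $(1),(2),(5)$ because those have $f' \ne id$. Together with Step 2 this yields that $A$ is isomorphic to \emph{precisely one} member of the list.

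The main obstacle — and the reason this is only an enumeration, not a ``tight'' classification — is case $(2)$ (symmetrically $(5)$) when $n$ is composite and \emph{every} $i_k \in N^2(f')$ has $\tau^{i_k}$ not generating ${\rm Gal}(K/F)$: Hilbert 90 then only normalizes $y_{i_k}$ modulo the norm-one group of a proper subextension ${\rm Fix}(\tau^{i_k}) \subsetneq K$, not modulo $S(K)$, so no single coordinate can be pushed into $M$ and one is left with an unresolved orbit-space parametrization. A secondary point that must be checked case-by-case is that the scalings performed inside one family never move the algebra into the normal form of another; this again follows from the invariance of the $N^j$'s and of the constant term, but it has to be verified for each of the five forms.
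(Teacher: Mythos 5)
Your proposal is correct and follows essentially the same route as the paper: reduce to the normal forms of Theorem \ref{le:2.9fields} via Kaplansky's trick and Theorem \ref{thm:2.2}, scale a coefficient into $M$ by Hilbert's Theorem 90 whenever some $\tau^{i_k}$ with $y_{i_k}\in K^\times$ generates ${\rm Gal}(K/F)$, and separate the types using the invariants $N^j$ from Lemma \ref{le:2.8} together with the opposite-algebra trick to distinguish (3) from (4). You also correctly identify the same obstruction the paper records for composite $n$, where no coordinate can be pushed into $M$.
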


 To compete a first attempt at a rough classification of $n$-dimensional algebras which have a  cyclic Galois field extension as unital heart, we now check when two algebras of the same type are isomorphic:

 \begin{theorem} \label{thm:cyclicGalois} 
 Let $K/F$ be a  cyclic Galois field extension of degree $n$, ${\rm Gal}(K/F)=\langle \tau\rangle$.
 \begin{enumerate}
 \item 
 For $y_1,z_1\in M$, $y_i,z_i\in K^\times$ for all $i$,  $N_{A_0}(y)\not=0$, $N_{A_0}(z)\not=0$, and $g,g'\in {\rm Gl}(V)$, we have
  $$K^{(1+L(y_1)\tau + L(y_2)\tau^2 + \dots + L(y_{n-1})\tau^{n-1},g)}\cong K^{(1+L(z_1)\tau + L(z_2)\tau^2 + \dots + L(z_{n-1})\tau^{n-1},g')}$$
 if and only if $y_1=z_1$, and exactly one of the following holds:
 \\ (i)  there exists $a\in F^\times$, such that  $g'=a g$ and  $z_i=y_i$,
   \\ (ii) there exists $a\in F^\times$, such that   $z_i=\tau^i(y_1^{-1}) y_1\tau(y_i)$, $g'=a \tau g\tau^{-1}L(y_1^{-1})$,
 \\ (iii) there exists $v\in K^\times$ and $\sigma\in {\rm Gal}(K/F)$, $\sigma\not\in \{ id,\tau\}$, such that $y_1=\tau(v) v^{-1}\sigma(y_1)$, $z_i=\tau^i(v) v^{-1}\sigma(y_i)$  and $g'=\sigma g\sigma^{-1}L(v)$.

 \item 
 For non-empty $N^0(f)=N^0(f')$,
  and $N_{A_0/F}(1+y_1t+\dots+y_{n-1}t^{n-1})\not=0$, $g,g'\in {\rm Gl(V)}$, we have
 $$K^{(1+L(y_1)\tau + L(y_2)\tau^2 + \dots + L(y_{n-1})\tau^{n-1},g)}\cong K^{(1+L(z_1)\tau + L(z_2)\tau^2 + \dots + L(z_{n-1})\tau^{n-1},g')}$$
 if and only if  there exists $v\in K^\times$ and $\sigma\in {\rm Gal}(K/F)$, such that $g'=\sigma g\sigma^{-1}L(v)$ and $z_i=\tau^i(v) v^{-1}\sigma(y_i)$ for all $i$.
 In particular, if there exists  $y_{i_k}\not=0$, such that $\tau^{i_k}$ generates ${\rm Gal}(K/F)$, then we have
   the smallest such $i_k$ with  $y_{i_k},z_{i_k}\in M$, so that
 $$K^{(1+L(y_1)\tau + L(y_2)\tau^2 + \dots + L(y_{n-1})\tau^{n-1},g)}\cong K^{(1+L(z_1)\tau + L(z_2)\tau^2 + \dots + L(z_{n-1})\tau^{n-1},g')}$$
  if and only if $y_{i_k}=z_{i_k}$,  and and exactly one of the following holds:
  \\ (i)  there exists $a\in F^\times$, such that  $g'=a g$ and  $z_i=y_i$,
   \\ (ii) there exists $a\in F^\times$ such that   $z_i=\tau^i(y_{i_k}^{-1}) y_1\tau^{i_k}(y_i)$ for all $i\not={i_k}$, $g'=a \tau^{i_k} g\tau^{-i_k}L(y_{i_k}^{-1})$,
    \\ (iii) there exists $v\in K^\times$ and  $\sigma\in {\rm Gal}(K/F)$, $\sigma\not\in \{ id,\tau^{i_k}\}$, such that $y_{i_k}=\tau(v) v^{-1}\sigma(y_{i_k})$, $z_i=\tau^i(v) v^{-1}\sigma(y_i)$  for all $i\not={i_k}$ and $g'=\sigma g\sigma^{-1}L(v)$.
  \item 
  For $x_i,w_i\in K$ with $N_{A_0}(x_1t+\dots+x_{n-1}t^{n-1})\not=0$, $N_{A_0}(w_1t+\dots+w_{n-1}t^{n-1})\not=0$,
   $$K^{(id,L(x_1)\tau +  \dots + L(x_{n-1})\tau^{n-1})}\cong K^{(id,L(w_1)\tau +  \dots + L(w_{n-1})\tau^{n-1})}$$
   if and only if  there exists $v\in K^\times$ and $\sigma\in {\rm Gal}(K/F)$, such that $ w_i= \sigma(x_i)\tau^i (v)$ for all $i$.
 \item 
 For $x_i,w_i\in K$ with $N_{A_0}(1+x_1t+\dots+x_{n-1}t^{n-1})\not=0$, $N_{A_0}(1+w_1t+\dots+w_{n-1}t^{n-1})\not=0$,
   $$K^{(id,1+L(x_1)\tau +  \dots + L(x_{n-1})\tau^{n-1})}\cong K^{(id,1+L(w_1)\tau +  \dots + L(w_{n-1})\tau^{n-1})}$$
 if and only if there exists  $\sigma\in {\rm Gal}(K/F)$, such that $ w_i= \sigma(x_i)$ for all $i$. 
 \item 
  For non-empty $N^0(f)=N(f')$,
  and $N_{A_0/F}(y_1t+\dots+y_{n-1}t^{n-1})\not=0$, $N_{A_0/F}(z_1t+\dots+z_{n-1}t^{n-1})\not=0$, $g,g'\in {\rm Gl(V)}$,
 $$K^{(L(y_1)\tau + L(y_2)\tau^2 + \dots + L(y_{n-1})\tau^{n-1},g)}\cong K^{(L(z_1)\tau + L(z_2)\tau^2 + \dots + L(z_{n-1})\tau^{n-1},g')}$$
 if and only if there are are $u,v\in K^\times$ and $\sigma\in {\rm Gal}(K/F)$, such that
 $g'=L(u^{-1})\sigma g\sigma^{-1}L(uv)$
  and $z_i=\tau^i(u)\tau^i(v) v^{-1}\sigma(y_i)$ for all  $i\in N^2(f)=N^2(f')$.

  \end{enumerate}
 \end{theorem}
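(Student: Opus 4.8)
The plan is to run, in degree $n$, the argument already carried out for the cubic case in Theorem~\ref{thm:classcubic}. The only engine is the isomorphism criterion of Lemma~\ref{le:2.8}, together with the remark following it that $N^0(f),N^1(f),N^2(f)$ are isomorphism invariants of $K^{(f,g)}$. Note that since $K$ is here a field, $N^1(f)=\emptyset$ always, so each $y_i$ is either $0$ or a unit and $N^0(f)\sqcup N^2(f)=[0,n-1]$; in particular $N^0(f)=N^0(f')$ is forced for isomorphic algebras, which is exactly what the hypotheses in parts (2) and (5) encode.

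First I would normalize using the $i=0$ component. In parts (1)--(4) both $f$ and $f'$ have $y_0=1$, so the critical relation of Lemma~\ref{le:2.8} at $i=0$ reads $1=u$; hence $u=1_K$, $g'=\sigma g\sigma^{-1}L(v)$, and $z_i=\tau^i(v)v^{-1}\sigma(y_i)$ for all $i$. Applying $N=N_{K/F}$ to the relation at $i=1$ in part (1) (resp.\ at $i=i_k$ in part (2)) and using $\tau$-invariance of $N$ gives $N(z_1)=N(y_1)$, so $z_1y_1^{-1}\in S(K)$ by Hilbert~90 and therefore $z_1=y_1$, both lying in $M$ (this is the norm computation of Lemma~\ref{le:M}, valid verbatim in any degree). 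In part (2) the step of scaling $y_{i_k}$ into $M$ requires that $\tau^{i_k}$ generate ${\rm Gal}(K/F)$, so that ${\rm Fix}(\tau^{i_k})=F$ and $S(K)=\{\tau^{i_k}(v)v^{-1}\mid v\in K^\times\}$; one picks the smallest such $i_k$, and when no index of $N^2(f)$ yields a generator --- possible only for composite $n$ --- this normalization is unavailable and the criterion stays in the raw ``$g'=\sigma g\sigma^{-1}L(v)$, $z_i=\tau^i(v)v^{-1}\sigma(y_i)$'' form recorded at the start of part~(2).

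Next I would case-split on $\sigma$, exactly as in the proof of Theorem~\ref{thm:classcubic}(1). For $\sigma={\rm id}$ the relation $y_1=\tau(v)v^{-1}y_1$ forces $\tau(v)=v$, i.e.\ $v=a\in F^\times$, and back-substitution yields $z_i=y_i$ and $g'=ag$ --- item (i). For $\sigma=\tau$ (resp.\ $\sigma=\tau^{i_k}$) the relation $y_1=\tau(v)v^{-1}\tau(y_1)$ rewrites as $\tau(vy_1)=vy_1$, so $v=ay_1^{-1}$ for some $a\in F^\times$, and back-substitution yields $z_i=\tau^i(y_1^{-1})y_1\tau(y_i)$ and $g'=a\tau g\tau^{-1}L(y_1^{-1})$ --- item (ii) (with $i_k$ in place of $1$ in part (2)). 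For every other $\sigma$ there is no general way to solve for $v$, and one simply records the three relations --- item (iii); the converse of (i)--(iii) is immediate, since the prescribed data exhibit a triple $(u,v,\sigma)$ satisfying the critical relations, whence Lemma~\ref{le:2.8} gives the isomorphism. Parts (3) and (4) are the case $f={\rm id}$: here $g'=\sigma g\sigma^{-1}L(v)=\sum_i L(\sigma(x_i)\tau^i(v))\tau^i$, and uniqueness of this expansion (Corollary~\ref{cor:2.9}) gives $w_i=\sigma(x_i)\tau^i(v)$; in part (4) the constant term forces $L(v)=1$, i.e.\ $v=1$, whence $w_i=\sigma(x_i)$. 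Finally, in part (5) the constant term of $f$ is $0$, so the $i=0$ relation is vacuous, $u$ remains free, and one merely transcribes Lemma~\ref{le:2.8} --- the relations at $i\in N^0(f)$ being trivial and $N^1(f)=\emptyset$ --- obtaining the criterion restricted to $i\in N^2(f)=N^2(f')$.

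The hard part is not conceptual but organizational, and lives in part (2): one must single out the correct index $i_k$, verify that $S(K)$ really coincides with $\{\tau^{i_k}(v)v^{-1}\}$ (Hilbert~90 for the generator $\tau^{i_k}$), and keep the prime-versus-composite dichotomy straight. Everything else --- the component matching via Corollary~\ref{cor:2.9}, the substitutions $v=a$ and $v=ay_{i_k}^{-1}$, and the converses --- is formally identical to the degree-$3$ argument of Theorem~\ref{thm:classcubic}, so no genuinely new ideas are needed.
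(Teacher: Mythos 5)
Your proposal is correct and follows essentially the same route as the paper: reduce to the critical relations of Lemma \ref{le:2.8}, use the $i=0$ component to force $u=1$ in parts (1)--(4), invoke the norm argument of Lemma \ref{le:M} (correctly noting it extends verbatim beyond degree $3$) to pin $z_1=y_1$ resp.\ $z_{i_k}=y_{i_k}$, case-split on $\sigma\in\{\mathrm{id},\tau,\text{other}\}$, match components via Corollary \ref{cor:2.9} in parts (3)--(4), and leave part (5) in raw form since $u$ stays free. No substantive differences from the paper's proof.
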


 \begin{proof}
 By Lemma \ref{le:2.8}, two algebras mentioned in any of the cases of Theorem \ref{thm:cyclicGalois} are isomorphic if and only if there are $u,v\in K^\times$ and $\sigma\in {\rm Gal}(K/F)$, such that the critical relations in Lemma \ref{le:2.8} are satisfied.
 \\ \\(1) The critical relations are  $z_i=\tau^i(v) v^{-1}\sigma(y_i)$  and $g'=\sigma g\sigma^{-1}L(v)$, which implies
 $N_K(z_1)=N_K(y_1)$, hence $z_1=y_1$ by the definition of $M$ (Lemma \ref{le:M}).
 Using $z_1=y_1$ we go back to the equation
 $z_1=\tau(v) v^{-1}\sigma(y_1)$. If $\sigma=id$ this yields $v=a\in F^\times$. If $\sigma=\tau$ then $a=vy_1\in F^\times$. 
 Therefore the critical relations hold if and only if $z_1=y_1$ and
  there exists $a\in F^\times$ such that  $g'=a g$ and  $z_i=y_i$,
  or  there exists $a\in F^\times$ such that  $z_i=\tau^i(y_1^{-1}) y_1\tau(y_i)$ and $g'=a \tau g\tau^{-1}L(y_1^{-1})$, or  $y_1=\tau(v) v^{-1}\sigma(y_1)$,
  $z_i=\tau^i(v) v^{-1}\sigma(y_i)$  and $g'=\sigma g\sigma^{-1}L(v)$
   for some $\sigma\in {\rm Gal}(K/F)$, $\sigma\not\in \{ id,\tau\}$.
  \\ \\ (2) The critical relations are $g'=\sigma g\sigma^{-1}L(v)$
   and  $z_i=\tau^i(v) v^{-1}\sigma(y_i)$.
  If there exists  $y_{i_k}\not=0$, such that $\tau^{i_k}$ generates ${\rm Gal}(K/F)$, then
  $K\cong K^{(1+L(y_1)\tau + L(y_2)\tau^2 + \dots + L(y_{n-1})\tau^{n-1},g)}$
 for $y_{i_k}\in M$. Then $N_K(z_{i_k})=N_K(y_{i_k})$, hence $z_{i_k}=y_{i_k}$ by the definition of $M$.  This yields $v=a\in F^\times$ when $\sigma=id$ and $v=ay_{i_k}^{-1}$ when $\sigma=\tau^{i_k} $.
 \\\\ (3)
Here, the critical relations reduce to $u=1$ and
 $$L(w_1)\tau +\dots + L(w_{n-1})\tau^{n-1}=\sigma ( L(x_1)\tau +\dots + L(x_{n-1})\tau^{n-1}  )\sigma^{-1}L(v)$$
 $$ = \sigma L(x_1)\tau \sigma^{-1}L(v) + \sigma L(x_2)\tau^2  \sigma^{-1}L(v)+\dots + \sigma L(x_2)\tau^{n-1}  \sigma^{-1}L(v)$$
$$ = L( \sigma(x_1) \tau(v)) \tau + L(\sigma(x_2)\tau^2 (v) )\tau^2 +\dots + L(\sigma(x_2)\tau^{n-1} (v) )\tau^{n-1}.$$
By Corollary \ref{cor:2.9}, this is equivalent to
 $ w_i= \sigma(x_i)\tau^i(v)$ for all $i$.
\\\\ (4) 
Here, the critical relations reduce to $u=1$ and
$$1+L(w_1)\tau  +\dots + L(w_{n-1})\tau^{n-1} =  \sigma ( 1+L(x_1)\tau +\dots + L(x_{n-1})\tau^{n-1}  )\sigma^{-1}L(v)$$
$$ = L(v)+  \sigma L(x_1)\tau \sigma^{-1}L(v) +\dots + \sigma L(x_{n-1})\tau^{n-1}  \sigma^{-1}L(v)$$
$$ = L(v)+  L( \sigma(x_1) \tau(v)) \tau +\dots + L(\sigma(x_{n-1})\tau^{n-1} (v) )\tau^{n-1}.$$
 By Corollary \ref{cor:2.9}, this is equivalent to
$ L(v)=1$ and $ w_i= \sigma(x_i)\tau^i (v)$ for all $i$.
This is equivalent to $v=1$ and $ w_i= \sigma(x_i)\tau^i (v)= \sigma(x_i)$ for all $i$.
\\\\ (5)  Here, the critical relations cannot be simplified.

  \end{proof}

  \begin{corollary} \label{thm:cyclicGaloisprime} 
 Let $K/F$ be a  cyclic Galois field extension of prime degree $n$ with ${\rm Gal}(K/F)=\langle \tau\rangle$,  $\sigma\in {\rm Gal}(K/F)$.
 \begin{enumerate}
 \item 
 For $y_1,z_1\in M$, $y_i,z_i\in K^\times$ with $N_{A_0}(y)\not=0$, $N_{A_0}(z)\not=0$, and $g,g'\in {\rm Gl}(V)$, we have
  $$K^{(1+L(y_1)\tau + L(y_2)\tau^2 + \dots + L(y_{n-1})\tau^{n-1},g)}\cong K^{(1+L(z_1)\tau + L(z_2)\tau^2 + \dots + L(z_{n-1})\tau^{n-1},g')}$$
 if and only if $y_1=z_1$, and exactly one of the following holds:
 \\ (i)  there exists $a\in F^\times$, such that  $g'=a g$ and  $z_i=y_i$ for all $i$,
   \\ (ii) there exists $a\in F^\times$ such that   $z_i=\tau^i(y_1^{-1}) y_1\tau(y_i)$ for all $i$, $g'=a \tau g\tau^{-1}L(y_1^{-1})$,
    \\ (iii) there exists $v\in K^\times$ and $\sigma\in {\rm Gal}(K/F)$, $\sigma\not\in \{ id,\tau\}$, such that $y_1=\tau(v) v^{-1}\sigma(y_1)$, $z_i=\tau^i(v) v^{-1}\sigma(y_i)$ for all $i$,  and $g'=\sigma g\sigma^{-1}L(v)$.
 \item 
  For non-empty $N^0(f)=N^0(f')$, and $N_{A_0/F}(1+y_1t+\dots+y_{n-1}t^{n-1})\not=0$, $g,g'\in {\rm Gl(V)}$, and  $i_k$ be such that $y_{i_k},z_{i_k}\in M$
  (i.e. this is the smallest $i_k$ be such that $y_{i_k}\not=0$), we have
 $$K^{(1+L(y_1)\tau + L(y_2)\tau^2 + \dots + L(y_{n-1})\tau^{n-1},g)}\cong K^{1+L(z_1)\tau + L(z_2)\tau^2 + \dots + L(z_{n-1})\tau^{n-1},g')}$$
  if and only if $y_{i_k}=z_{i_k}$,  and exactly one of the following holds:
 \\ (i)  there exists $a\in F^\times$, such that  $g'=a g$ and  $z_i=y_i$ for all $i$,
   \\ (ii) there exists $a\in F^\times$ such that   $z_i=\tau^i(y_{i_k}^{-1}) y_1\tau^{i_k}(y_i)$ for all $i\not={i_k}$, $g'=a \tau^{i_k} g\tau^{-i_k}L(y_{i_k}^{-1})$,
    \\ (iii) there exists $v\in K^\times$ and  $\sigma\in {\rm Gal}(K/F)$, $\sigma\not\in \{ id,\tau^{i_k}\}$, such that $y_{i_k}=\tau(v) v^{-1}\sigma(y_{i_k})$, $z_i=\tau^i(v) v^{-1}\sigma(y_i)$  for all $i\not={i_k}$ and $g'=\sigma g\sigma^{-1}L(v)$.
  \item 
   For $x_i,w_i\in K$ with $N_{A_0}(x_1t+\dots+x_{n-1}t^{n-1})\not=0$, $N_{A_0}(w_1t+\dots+w_{n-1}t^{n-1})\not=0$,
   $$K^{(id,L(x_1)\tau +  \dots + L(x_{n-1})\tau^{n-1})}\cong K^{(id,L(w_1)\tau +  \dots + L(w_{n-1})\tau^{n-1})}$$
   if and only if  there exists $v\in K^\times$ and  $\sigma\in {\rm Gal}(K/F)$, such that $ w_i= \sigma(x_i)\tau^i (v)$ for all $i$.
 \item 
 For $x_i,w_i\in K$ with $N_{A_0}(1+x_1t+\dots+x_{n-1}t^{n-1})\not=0$, $N_{A_0}(1+w_1t+\dots+w_{n-1}t^{n-1})\not=0$,
   $$K^{(id,1+L(x_1)\tau +  \dots + L(x_{n-1})\tau^{n-1})}\cong K^{(id,1+L(w_1)\tau +  \dots + L(w_{n-1})\tau^{n-1})}$$
 if and only if there exists  $\sigma\in {\rm Gal}(K/F)$ such that $ w_i= \sigma(x_i)$ for all $i$.
 \item 
  For non-empty $N^0(f)=N^0(f')$, $N_{A_0/F}(y_1t+\dots+y_{n-1}t^{n-1})\not=0$, $N_{A_0/F}(z_1t+\dots+z_{n-1}t^{n-1})\not=0$, $g,g'\in {\rm Gl(V)}$,
 $$K^{(L(y_1)\tau + L(y_2)\tau^2 + \dots + L(y_{n-1})\tau^{n-1},g)}\cong K^{(L(z_1)\tau + L(z_2)\tau^2 + \dots + L(z_{n-1})\tau^{n-1},g')}$$
 if and only if there are are $u,v\in K^\times$ and  $\sigma\in {\rm Gal}(K/F)$, such that
 $g'=L(u^{-1})\sigma g\sigma^{-1}L(uv)$
  and $z_i=\tau^i(u)\tau^i(v) v^{-1}\sigma(y_i)$ for all  $i\in N^2(f)=N^2(f')$.

 \end{enumerate}
  \end{corollary}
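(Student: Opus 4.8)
The plan is to deduce Corollary~\ref{thm:cyclicGaloisprime} directly from Theorem~\ref{thm:cyclicGalois}. The only structural feature distinguishing a cyclic Galois extension of prime degree from one of arbitrary degree is the elementary group-theoretic fact that, when $n$ is prime, \emph{every} power $\tau^i$ with $1 \le i \le n-1$ generates ${\rm Gal}(K/F)$, equivalently ${\rm Fix}(\tau^i) = F$. So the proof reduces to tracking where the proof of Theorem~\ref{thm:cyclicGalois} needs an index $i_k$ with $\tau^{i_k}$ a generator, and noting that for prime $n$ this requirement is automatically met.

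Parts (1), (3), (4) and (5) I would simply transfer: their statements coincide verbatim with parts (1), (3), (4), (5) of Theorem~\ref{thm:cyclicGalois}, and the arguments given there --- normalising $y_0 = 1$ via Lemma~\ref{le:2.8} with $u = y_0^{-1}$, reading off the critical relations with $u = 1$, comparing components through Corollary~\ref{cor:2.9}, and taking norms together with Lemma~\ref{le:M} to force $y_1 = z_1$ in (1) --- use nothing about $n$ beyond $K/F$ being a cyclic Galois field extension. (For the forward implications the equality $N^0(f) = N^0(f')$ is not an extra hypothesis but a consequence of Lemma~\ref{le:2.8}(iii), since the conditions $y_i = 0$, $y_i \in K^\times$ are preserved under the critical relations.)

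Part (2) is the only place where primality is genuinely used. Let $f' = {\rm id} + \sum_{i=1}^{n-1}L(y_i)\tau^i$ have non-empty $N^0(f')$. Since $K$ is a field, $N^1(f') = \emptyset$, and since $f' \ne {\rm id}$ (otherwise we are in type (3)), the set $N^2(f') \cap \{1,\dots,n-1\}$ is non-empty; let $i_k$ be its least element, so $y_{i_k} \in K^\times$, and by primality $\tau^{i_k}$ generates ${\rm Gal}(K/F)$. Hilbert's Theorem 90 over ${\rm Fix}(\tau^{i_k}) = F$ then gives $S(K) = \{\tau^{i_k}(v)v^{-1} \mid v \in K^\times\}$, so applying Lemma~\ref{le:2.8} with $\sigma = id$, $u = 1$ scales $y_{i_k}$ to an element of $M$; hence every algebra of type (2) admits a representative with $y_{i_k} \in M$, so we may assume $y_{i_k}, z_{i_k} \in M$. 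The critical relations now read $z_i = \tau^i(v)v^{-1}\sigma(y_i)$ and $g' = \sigma g \sigma^{-1}L(v)$; taking norms yields $N_K(z_{i_k}) = N_K(y_{i_k})$, hence $z_{i_k} = y_{i_k}$ by Lemma~\ref{le:M}. Substituting this into $y_{i_k} = \tau^{i_k}(v)v^{-1}\sigma(y_{i_k})$ and distinguishing $\sigma = id$ (which forces $\tau^{i_k}(v) = v$, i.e. $v = a \in F^\times$), $\sigma = \tau^{i_k}$ (which forces $vy_{i_k} = a \in F^\times$), and $\sigma \notin \{id, \tau^{i_k}\}$, then substituting the resulting $v$ back into the remaining critical relations, produces the three alternatives (i), (ii), (iii).

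The main obstacle here is bookkeeping rather than conceptual: one has to be certain that in part (2) the ``scaling into $M$'' step is available for \emph{every} algebra of that type --- this is exactly the content of the ``in particular'' clause of Theorem~\ref{thm:cyclicGalois}(2), which is only conditional there but becomes unconditional in the prime case, precisely because a non-trivial element of a cyclic group of prime order is always a generator. Everything else is routine substitution already carried out for the general cyclic extension.
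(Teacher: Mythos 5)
Your proposal is correct and matches the paper's (implicit) argument: the paper gives no separate proof for this corollary, treating it as an immediate consequence of Theorem \ref{thm:cyclicGalois} once one observes that for prime $n$ every $\tau^{i}$ with $1\le i\le n-1$ generates ${\rm Gal}(K/F)$, so the conditional ``in particular'' clause of part (2) becomes unconditional while parts (1), (3), (4), (5) transfer verbatim. Your additional bookkeeping for part (2) --- using $N^1(f')=\emptyset$ for a field, Hilbert~90 over ${\rm Fix}(\tau^{i_k})=F$ to scale $y_{i_k}$ into $M$, and Lemma \ref{le:M} to force $y_{i_k}=z_{i_k}$ --- is exactly the reasoning the paper carries out in Section \ref{sec5} for the general case.
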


\bigskip
\emph{Acknowledgments:}
This paper was written while the author was a visitor at the University of Ottawa. She acknowledges support for her stay from  the Centre de Recherches Math\'ematiques for giving a colloquium talk,
  and from Monica Nevins' NSERC Discovery Grant RGPIN-2020-05020. She would like to thank the Department of Mathematics and Statistics for its hospitality and the congenial atmosphere, and in particular M. Nevins for lots of fruitful discussions.



\begin{thebibliography}{1}

\bibitem{B} U. Bekbaev, \emph{Complete classification of two-dimensional algebras over any basic field}. AIP Conf. Proc. 2880, 030001 (2023)
\\
\verb#https://doi.org/10.1063/5.0165726#



\bibitem{D} E. Darp\"o, \emph{Isotopes of Hurwitz algebras}. Mediterr. J. Math. (2018) 15:52
\verb#https://doi.org/10.1007/s00009-018-1095-y#



\bibitem{D1} L. E. Dickson, \emph{On finite algebras.} Nachrichten Ges. Goettingen (1950), 358-393.
\bibitem{D2} L. E. Dickson, \emph{Linear algebras in which division is always uniquely possible}. Trans. AMS 7 (1906), 370-390.
\bibitem{D3} L. E. Dickson, \emph{On linear algebras.} AMS Monthly 13 (1906), 201-205.
\bibitem{D4} L. E. Dickson, \emph{On triple algebras and ternary forms.} Bull. AMS 14 (1908), 160-168.

\bibitem{G} P.~Gabriel \emph{Finite representation type is open}. Representations of Algebras. Springer Lecture Notes 488, 132–155 (1975).

\bibitem{HP} M. H\"ubner, H. P.~Petersson, \emph{Two-dimensional real division algebras revisited.}
Beitr\"age Algebra Geom. 45(1), 29–36 (2004)

\bibitem{J96} N.~Jacobson,
``Finite-dimensional division algebras over fields.'' Springer Verlag,
Berlin-Heidelberg-New York, 1996.

\bibitem{KI} I.~Kaplansky, \emph{Three-dimensional division algebras.} I. J. Algebra 40 (1976), 384-391.
\bibitem{KII} I.~Kaplansky, \emph{Three-dimensional division algebras. II.} Houston Journal of Mathematics 1 (1975), 63-79.

\bibitem{Ka} I. Kaygorodov, \emph{Non-associative algebraic structures: classification and structure.} Commun. Math. 32 (3) (2024), 1–62.

\bibitem{KV} I. Kaygorodov, Y. Volkov, \emph{The variety of two-dimensional algebras
over an algebraically closed field.} Canad. J. Math. Vol. 71 (4) (2019) 819-842.

\bibitem{KMRT}  M.-A.~Knus, A.~Merkurjev,, M.~Rost, J.-P.~Tignol,
``The Book of Involutions'', AMS Coll. Publications, vol. 44 (1998).

\bibitem{McC} K.~McCrimmon, \emph{Homotopes of alternative algebras}. Math. Ann. 191 (1971), 253-262.

\bibitem{Ma} G.~Mazzola, \emph{The algebraic and generic classification of associative algebras of dimension 5}. Manuscripta Math. 27 (1979), 81-101.

\bibitem{M1} G.~Menichetti, \emph{Algebre tridimensionali su un campo di Galois.}
Ann. Mat. Pura Appl. 97 (4) (1973), 283-302.

\bibitem{M2} G.~Menichetti, \emph{On a Kaplansky conjecture concerning three-dimensional division algebras over a finite field}.
J.  Algebra 47 (2) (1977),  400-410.


\bibitem{M} G.~Menichetti,  \emph{Sopra una classe di quasicorpi distributivi di ordine finito.}
Atti Accad. Naz. Lincei Rend. Cl. Sci. Fis. Mat. Natur. (8), 59 (5) (1975), 339-348. 


\bibitem{P} H. P.~Petersson, {\it The classification of two-dimensional nonassociative algebras},
Result. Math. 37 (2000), 120-154.

\bibitem{PS} H. P.~Petersson, M. Scherer, \emph{The number of non-isomorphic two-dimensional algebras over a finite field.}
Result. Math. 45 (2004), 1-2, 137–152.



\bibitem{RRB} I. S. Rakhimov, I. M. Rikhsiboev and W. Basri, \emph{Complete lists of low dimensional complex associative algebras.} 2009.
\verb#https://arxiv.org/pdf/0910.0932.pdf#

\bibitem{Sch} R. D.~Schafer, ``An Introduction to Nonassociative Algebras.'' Dover Publ. Inc., New York, 1995.

\bibitem{S13} A. Steele, {\it Some new classes of division algebras and potential applications to space-time block coding}. PhD Thesis, University of Nottingham, 2013.
\verb#http://eprints.nottingham.ac.uk/13934/1/PhdthesisFinal.pdf#

\end{thebibliography}
\end{document}